\documentclass[11pt, reqno]{amsart}
\usepackage{graphicx} 
\usepackage{amsmath}
\usepackage{amsthm}
\usepackage{amssymb}
\usepackage{color}
\usepackage{tikz-cd}
\usepackage{verbatim}
\usepackage{stmaryrd}
\usepackage{hyperref}
\usepackage{savesym}
\savesymbol{degree}
\usepackage{mathabx}
\restoresymbol{idk}{degree}
\usepackage{scalerel}
\usepackage{mathrsfs}
\usepackage{biblatex} 
\usepackage{pifont}

\usepackage{amsthm}

\usepackage{geometry}
\newtheorem{theorem}{Theorem}[section]
\newtheorem{definition}[theorem]{Definition}
\newtheorem{proposition}[theorem]{Proposition}
\newtheorem{lemma}[theorem]{Lemma}
\newtheorem{corollary}[theorem]{Corollary}
\newtheorem{fact}[theorem]{Fact}

\theoremstyle{remark}

\title{Points and their multiples on curves in powers of simple abelian varieties}
\author{David J. Smith}

\begin{document}

\begin{abstract}
    Let $G$ be a simple abelian variety of dimension $g \in \mathbb{N}$ defined over $\mathbb{Q}^\mathrm{alg}$ and let $C_1, C_2 \subseteq G^N(\mathbb{C})$ be irreducible closed algebraic curves with $N \geq 3$. Further assume that at least one of $C_1$ and $C_2$ is not defined over $\mathbb{Q}^\mathrm{alg}$. Suppose that there does not exist an algebraic subgroup $H_1 \subseteq G^N(\mathbb{C})$ of dimension $g$ such that $C_1 \subseteq H_1$ and that there does not exist an algebraic subgroup $H_2 \subseteq G^N(\mathbb{C})$ of dimension $2g$ such that $C_1 \cup C_2 \subseteq H_2$. Denoting $\mathcal{N} = \{n \in \mathbb{N} \ | \ [n]C_1 \subseteq C_2\}$, we prove that $\bigcup_{n \in \mathbb{N} \setminus \mathcal{N}}\{x \in C_1 \ | \ nx \in C_2\}$ is finite. 
\end{abstract}

\maketitle

\section{Introduction}

Suppose we are given two algebraic curves $C_1, C_2$ in $\mathbb{G}_m^n(\mathbb{C})$. What can we say about the points $x \in C_1$ such that $x^m$ lies in $C_2$? This question was first posed by A. Levin (see \cite[p.39]{zannierBook}) and, by recasting this problem as an intersection of $C_1 \times C_2$ with a family of algebraic subgroups, fits naturally within the paradigm of unlikely intersections. In particular, Zannier observed in \cite[p.~39-40]{zannierBook}, assuming the Zilber-Pink conjecture for algebraic tori, 
that there are finitely many such points outside certain exceptional cases forced by algebraic subgroup relations.

An early partial result was given by Bays and Habegger in \cite{BaysHabegger2015} where they considered the case $C_1 = C_2$ for curves defined over a number field and proved a finiteness result under the additional hypothesis that some Galois conjugate of $C_1$ has finite intersection with the maximal compact subgroup of $\mathbb{G}_m^N(\mathbb{C})$. More recently, Boxall in \cite{boxall2024points} proved the following. 

\begin{fact}\label{OGTheorem}
    Let $C_1, C_2 \subseteq \mathbb{G}^N_m(\mathbb{C})$ be irreducible closed algebraic curves with $N \geq 3$. Further assume that at least one of $C_1$ and $C_2$ is not defined over $\mathbb{Q}^\mathrm{alg}$. Suppose that there does not exist an algebraic subgroup $H_1 \subseteq \mathbb{G}^N_m(\mathbb{C})$ of dimension one such that $C_1 \subseteq H_1$ and that there does not exist an algebraic subgroup $H_2 \subseteq \mathbb{G}^N_m(\mathbb{C})$ of dimension two such that $C_1 \cup C_2 \subseteq H_2$. Let $\mathcal{N} = \{n \in \mathbb{N} \ | \ [n]C_1 \subseteq C_2\}$. Then $\bigcup_{n \in \mathbb{N} \setminus \mathcal{N}}\{x \in C_1 \ | \ x^n \in C_2\}$ is finite. 
\end{fact}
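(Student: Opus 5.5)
Since Fact~\ref{OGTheorem} is Boxall's theorem, quoted here as background, I would follow the strategy of \cite{boxall2024points}. Its abelian analogue is the paper's main theorem, so the same skeleton should carry over. The first step is to recast the problem as an unlikely intersection. Consider the irreducible surface $S = C_1 \times C_2 \subseteq \mathbb{G}_m^{2N}$. For each $n$ let $H_n = \{(x,y) \ | \ y = x^n\}$, an algebraic subgroup of dimension $N$. A point $x \in C_1$ with $x^n \in C_2$ corresponds to a point of $S \cap H_n$. Since $2 + N - 2N = 2 - N < 0$ for $N \geq 3$, these intersections are unlikely. We must show that $\bigcup_{n \notin \mathcal{N}} (S \cap H_n)$ projects to a finite subset of $C_1$.

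Next I would split into a bounded-$n$ part and a large-$n$ part. For a fixed $n \notin \mathcal{N}$, the curve $[n]C_1$ is irreducible and not contained in $C_2$, so $[n]C_1 \cap C_2$ is finite. Since $[n]$ has finite fibres, $\{x \in C_1 \ | \ x^n \in C_2\}$ is finite. It therefore suffices to show there is a bound $n \leq n_0$ for all solutions, after discarding finitely many points.

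For the large-$n$ part I would use the hypothesis that $C_1$ or $C_2$ is not defined over $\mathbb{Q}^{\mathrm{alg}}$, via a specialization argument. Let $K$ be a finitely generated field of definition of positive transcendence degree. Each solution $x$ with $n \notin \mathcal{N}$ is an isolated point of $S \cap H_n$. I would also show that $(x, x^n)$ is not contained in an anomalous subvariety, by analysing positive-dimensional components of $S \cap H$ for subgroups $H$. The hypotheses rule these out: no $1$-dimensional subgroup contains $C_1$, and no $2$-dimensional subgroup contains $C_1 \cup C_2$. A surface $S$ lying in a coset of dimension $\leq N+1$ would, after projecting, force $C_1$ and $C_2$ into small subgroups. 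Given this, the point $x$ is algebraic over $K$. Moreover, its height relative to a transcendence basis, i.e.\ a function-field height, is bounded independently of $n$. This follows from a function-field bounded height theorem, a Zilber--Pink-for-curves-type statement of Ax--Schanuel/Bombieri--Masser--Zannier type over function fields. Alternatively one can argue directly: the relation $y = x^n$ gives $h(y) = n\,h(x)$ in the function-field height, while $h(x)$ and $h(y)$ are controlled linearly by the degrees on the curves. I would expect the Weil-height comparison on $C_1$ and $C_2$, namely $h(x^n) = n h(x)$ with both heights comparable to a fixed height on the curves up to $O(1)$, to force $h(x) = O(1/n)$. For $n$ large this gives $h(x)=0$, so $x$ is constant, i.e.\ defined over $\mathbb{Q}^{\mathrm{alg}}$.

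The main obstacle is the last step. One must show that only finitely many constant points arise, and that non-constant points give a contradiction for large $n$. Constant points of a non-constant curve $C_1$ are finite, unless $C_1$ is defined over $\mathbb{Q}^{\mathrm{alg}}$, in which case $C_2$ is not. In that case one uses the constant points of $C_2$ instead, since $x^n$ is constant too. Then $x^n$ lies in a finite set $F \subseteq C_2$. To handle infinitely many $n$, I would invoke Maurin's theorem (Zilber--Pink for curves over $\mathbb{Q}^{\mathrm{alg}}$) applied to $C_1 \times \{f\}$ or to the Galois orbits. Alternatively I would bound $n$ by the torsion/height behaviour of $F$: if $x$ is non-torsion, $h(x^n)=n h(x)$ is unbounded while $F$ is fixed. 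The torsion case is handled by Manin--Mumford (Lang) on $C_1$, which is not contained in a torsion coset of dimension one.
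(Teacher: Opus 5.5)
\textbf{There is a genuine gap, and it sits in the large-$n$ step, which is where all the difficulty lies.}

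You plan to show that $(x,x^n)$ avoids the anomalous locus of $S=C_1\times C_2$. This cannot work. For every $y_0\in C_2$, the curve $C_1\times\{y_0\}$ lies in the coset $\mathbb{G}_m^N\times\{y_0\}$ of dimension $N$, and $1>\max(0,\,2+N-2N)$. So this curve is anomalous, and likewise so is $\{x_0\}\times C_2$. Hence $S^{oa}=\varnothing$, and bounded height theorems of Bombieri--Masser--Zannier or Habegger type say nothing here, whether over number fields or function fields. Your hypotheses only stop $S$ itself from lying in a small coset, not its fibres.

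The fallback ``Weil-height comparison'' is not an argument either. Heights of points on $C_1$ and on $C_2$ are unbounded, and $h(x^n)=n\,h(x)$ is the only relation between them, so nothing yields $h(x)=O(1/n)$. As a sanity check, your sketch never really uses that one curve is not defined over $\mathbb{Q}^\mathrm{alg}$. Over $\mathbb{Q}^\mathrm{alg}$, an a priori bound $h(x)+h(x^n)\le B$, combined with Zhang's theorem on $C_1$ (which is not a torsion coset) and your fixed-$n$ finiteness, would settle the number-field case of Levin's question. That case is not known in general; only partial results such as Bays--Habegger's exist.

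Even granting $h(x)\le B/n$, you could not conclude $h(x)=0$. Function-field heights are not uniformly discrete, because $[K(x):K]$ may grow with $n$; for example $h(t^{1/n})=1/n$. You would need a geometric Bogomolov statement, and that fails outright when $C_1$ is defined over $\mathbb{Q}^\mathrm{alg}$.

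\textbf{The last step is also flawed as written.} The point $x$ depends on $n$, so ``$h(x^n)=nh(x)$ is unbounded while $F$ is fixed'' proves nothing: points $x_n$ with $x_n^n=f\in F$ simply have $h(x_n)=h(f)/n$. What works here is to note that all such $x$ lie in the division group of the finitely generated group $\langle F\rangle$. Laurent's theorem (Mordell--Lang for tori) then gives finiteness, unless $C_1$ is a translate of a one-dimensional subtorus. That case must be treated separately; compare Proposition~\ref{propositionIForgot}.

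\textbf{For contrast, Boxall's proof uses no heights.} The paper transposes it to abelian varieties in Theorems~\ref{thisIsIt} and~\ref{mainResult}. It runs as follows.
\begin{enumerate}
\item Reduce to $N=3$ with $C_1$ in no proper algebraic subgroup.
\item Take an ultraproduct of solutions to obtain a nonstandard $\tilde n$.
\item Use the transcendence hypothesis to deform the curves infinitesimally by an automorphism fixing $\mathbb{Q}^\mathrm{alg}$.
\item Apply the Bays--Kirby--Wilkie bound (Theorem~\ref{additiveIndependenceTheorem}, giving $\mathrm{tr.deg}\ge 3$ in the toric case) to the infinitesimal point $\tilde b-\tilde a$ and its $\tilde n$-multiple.
\item Compare with the bound $\le 2$ coming from the two deformed curves. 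Lemma~\ref{keyLemma} guarantees that $\tilde n$ lies outside the relevant $\Gamma$-closure, which gives the contradiction.
\end{enumerate}
That transcendence input is exactly what is missing in place of your bounded-height step.
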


Boxall's strategy was to first reduce the above fact using well-known consequences of Mordell-Lang to the following.
\begin{fact}\label{OGGmTheoremReduced}
    Let $C_1, C_2 \subseteq \mathbb{G}_m^3(\mathbb{C})$ be irreducible closed algebraic curves such that at least one of the curves is not defined over $\mathbb{Q}^\mathrm{alg}$. Suppose that there does not exist a proper algebraic subgroup $H \subseteq \mathbb{G}_m^3(\mathbb{C})$ such that $C_1 \subseteq H$. Let $\mathcal{N} = \{n \in \mathbb{N} \ | \ [n]C_1 \subseteq C_2\}$. Then $\bigcup_{n \in \mathbb{N}\setminus\mathcal{N}}\{x \in C_1 \ | \ x^n \in C_2\}$ is finite.
\end{fact}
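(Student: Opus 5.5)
The statement to prove is Fact~\ref{OGGmTheoremReduced}. I would split the exponents $n \in \mathbb{N}\setminus\mathcal{N}$ into bounded and large ones. For each fixed $n \notin \mathcal{N}$, the set $\{x \in C_1 \mid x^n \in C_2\}$ is $C_1 \cap [n]^{-1}C_2$. Because $C_1$ is irreducible and $[n]C_1 \not\subseteq C_2$, $C_1$ is not contained in the closed set $[n]^{-1}C_2$, so the intersection is a proper closed subset of $C_1$ and hence finite. So any finite range of $n$ contributes finitely many points. The real content is to show that for $n$ sufficiently large the set is \emph{empty}, or at least that the points it contributes come from a single finite set independent of $n$.

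For large $n$ I would exploit the hypothesis that one curve is not defined over $\mathbb{Q}^{\mathrm{alg}}$ through heights over a function field. Choose a finitely generated field $K$ over which $C_1, C_2$ are defined and which has positive transcendence degree over $\mathbb{Q}^{\mathrm{alg}}$. Realise $K$ as the function field of a projective variety over $\mathbb{Q}^{\mathrm{alg}}$. This gives a geometric Weil height $h$ on $\mathbb{G}_m^3(\overline{K})$ with two key properties: $h(x^n) = n\,h(x)$, and $h(x)=0$ exactly when all coordinates of $x$ are constant, i.e.\ lie in $\mathbb{Q}^{\mathrm{alg}}$.

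The heart of the argument is to compare $h(x)$ and $h(x^n)$ using the two curves. If $x \in C_1$ and $y = x^n \in C_2$, then $h(y) = n\,h(x)$. On the other hand, on each curve the heights of the individual coordinates are mutually comparable. I would try to derive a contradiction from these two facts unless $h(x)$ is bounded by $O(1/n)$ plus a constant coming from the non-constancy of the curves. The aim is to show that, for $n$ large, such $x$ must have $h(x)=0$, so $x$ is a $\mathbb{Q}^{\mathrm{alg}}$-point; I would lean on a function-field Bogomolov-type lower bound for $C_1$, which is where the hypothesis that $C_1$ lies in no proper algebraic subgroup enters, since it rules out small-height points accumulating along a subtorus. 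Then $x^n$ is also a $\mathbb{Q}^{\mathrm{alg}}$-point, lying in $C_2$.

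The case split is then as follows. If $C_1$ is not defined over $\mathbb{Q}^{\mathrm{alg}}$, it has only finitely many $\mathbb{Q}^{\mathrm{alg}}$-points, because its constant points lie in the intersection of $C_1$ with its conjugates over $K$. If instead $C_2$ is not defined over $\mathbb{Q}^{\mathrm{alg}}$, the same applies to $x^n$, and each such $y$ has only finitely many $n$-th roots on $C_1$. Here one must make the finiteness uniform in $n$, again using that $C_1$ meets torsion cosets of proper subgroups only finitely often (Laurent / Mordell--Lang).

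I expect the main obstacle to be the uniform height inequality. Comparing $n\,h(x)$ with the height constraints from $C_2$ is easy if $C_1$ and $C_2$ are both ``non-constant'' in a balanced way. However, $C_1$ may be constant with only $C_2$ varying, and controlling $h(x)$ from below on a constant curve is vacuous. In that case I would instead specialise the parameter of $C_2$ and use that the points $x$ are independent of the specialisation, forcing $x^n$ into infinitely many distinct fibres of $C_2$ and hence into a finite set.
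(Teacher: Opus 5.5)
Your reduction to large $n$ is fine: for fixed $n\notin\mathcal{N}$, $C_1\cap[n]^{-1}C_2$ is finite. Your treatment of height-zero solutions is also fine: a non-constant curve has finitely many $\mathbb{Q}^{\mathrm{alg}}$-points, and $n$-th roots on $C_1$ of a fixed finite set are handled by Laurent, as in Corollary~\ref{torsionCorollary}.

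\textbf{The gap: no uniform height bound.} The step that should force $h(x)=0$ for large $n$ is never supplied, and nothing you list produces it. Comparability of coordinate heights on a curve $C$ says $h(x^{\mathbf a})\approx c_C(\mathbf a)\,h(x)$ for monomials, where $c_C$ is determined by the orders of the coordinates at the points at infinity of $C$. Combined with the exact identity $h((x^n)^{\mathbf a})=n\,h(x^{\mathbf a})$, the constraints from $C_1$ and $C_2$ are mutually consistent whenever $c_{C_1}$ and $c_{C_2}$ are proportional.
\begin{itemize}
\item For example, take $C_1$ defined over $\mathbb{Q}^{\mathrm{alg}}$ and $C_2=\gamma C_1$ with $\gamma$ a generic (non-constant) point. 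Then $c_{C_2}=c_{C_1}$, the comparison holds for every value of $h(x)$, and you get no bound at all.
\item When $c_{C_1}$ and $c_{C_2}$ are not proportional, the error terms carried over from $C_1$ are multiplied by $n$, so at best you get $h(x)=O(1)$. Over a function field this is useless, since constant points have height $0$ and there are infinitely many points of bounded height.
\item Your own target, ``$O(1/n)$ plus a constant'', is already too weak to feed into a Bogomolov bound, which needs $h(x)<\epsilon$.
\end{itemize}
What you actually need is a bound on $h(x^n)$ uniform in $n$. That is a bounded height theorem for the unlikely intersections of the surface $C_1\times C_2\subseteq\mathbb{G}_m^6$ with the codimension-$3$ subgroups $\{y=x^n\}$, which is essentially the whole content of the statement. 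The standard tool gives nothing here. Habegger's bounded height theorem applies only off the anomalous locus, but every point $(x_0,y_0)$ of $C_1\times C_2$ lies on the anomalous curve $\{x_0\}\times C_2$, so $(C_1\times C_2)^{\mathrm{oa}}=\varnothing$.

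\textbf{The constant-$C_1$ case is circular.} The hypotheses allow $C_1$ to be defined over $\mathbb{Q}^{\mathrm{alg}}$ with only $C_2$ varying. Your fallback, that ``the points $x$ are independent of the specialisation'', presupposes $x\in C_1(\mathbb{Q}^{\mathrm{alg}})$. That is exactly the conclusion $h(x)=0$ you were trying to reach. In the example $C_2=\gamma C_1$, the solutions of $x^n\gamma^{-1}\in C_1$ genuinely move with $\gamma$.

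\textbf{How the paper avoids this.} The argument the paper relies on is Boxall's, transposed to abelian varieties in the proof of Theorem~\ref{mainResult}, and it uses no heights.
\begin{itemize}
\item It uses the non-$\mathbb{Q}^{\mathrm{alg}}$ hypothesis to produce an infinitesimally perturbed copy $C_1',C_2'$ of the curves.
\item It passes to an ultrapower to obtain a non-standard exponent $\tilde n$.
\item It applies the Bays--Kirby--Wilkie-type bound (Theorem~\ref{additiveIndependenceTheorem}) to the infinitesimal logarithm $\tilde b-\tilde a$ of the difference of $\tilde z$ and $\tilde x$. This forces transcendence degree at least $3$, while the two curves allow at most $2$, unless $\tilde n$ lies in the $\Gamma$-closure, which Lemma~\ref{keyLemma} excludes.
\end{itemize}
Your outline has no substitute for this transcendence input, nor for the uniform height bound it would replace.
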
 
To prove this reduced case, Boxall worked within the machinery of partial exponential fields, developed by Kirby in \cite{KirbyAlgebraic}. He slightly adapted the proof of a transcendence bound of Bays, Kirby and Wilkie, an analogue of Schanuel's conjecture for raising to an exponentially transcendental power, for a marginally more general result and used this bound along with some model-theoretic techniques and o-minimality to obtain the above fact.

In this paper, we present a natural analogue to Fact \ref{OGTheorem}  in the case of powers of simple abelian varieties and prove the following. \begin{theorem}\label{OGEllipticConjecture}
     Let $G$ be a simple abelian variety of dimension $g \in \mathbb{N}$ defined over $\mathbb{Q}^\mathrm{alg}$ and let $C_1, C_2 \subseteq G^N(\mathbb{C})$ be irreducible closed algebraic curves with $N \geq 3$. Further assume that at least one of $C_1$ and $C_2$ is not defined over $\mathbb{Q}^\mathrm{alg}$. Suppose that there does not exist an algebraic subgroup $H_1 \subseteq G^N(\mathbb{C})$ of dimension $g$ such that $C_1 \subseteq H_1$ and that there does not exist an algebraic subgroup $H_2 \subseteq G^N(\mathbb{C})$ of dimension $2g$ such that $C_1 \cup C_2 \subseteq H_2$. Let $\mathcal{N} = \{n \in \mathbb{N} \ | \ [n]C_1 \subseteq C_2\}$. Then $\bigcup_{n \in \mathbb{N} \setminus \mathcal{N}}\{x \in C_1 \ | \ nx \in C_2\}$ is finite. 
\end{theorem}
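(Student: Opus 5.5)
The plan is to mirror Boxall's strategy: first reduce Theorem~\ref{OGEllipticConjecture} to a three-dimensional core statement, then prove that core case with a transcendence bound combined with o-minimality and model-theoretic compactness.

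\textbf{Reduction to $N=3$.} I will reduce to the analogue of Fact~\ref{OGGmTheoremReduced}. Let $C_1, C_2 \subseteq G^3(\mathbb{C})$ be curves, at least one not defined over $\mathbb{Q}^{\mathrm{alg}}$, with $C_1$ not contained in any proper algebraic subgroup. Then the set $\bigcup_{n \notin \mathcal{N}}\{x \in C_1 \mid nx \in C_2\}$ should be finite.

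To carry out the reduction, take $x \in C_1$ with $nx \in C_2$ for some $n \notin \mathcal{N}$. Since $G$ is simple, algebraic subgroups of $G^N$ correspond, up to finite index, to submodules of $\mathrm{End}(G)^N$. Using the hypotheses on $H_1$ and $H_2$, I would find a surjective homomorphism $\pi\colon G^N \to G^3$ with the following properties:
\begin{itemize}
\item $\pi(C_1)$ lies in no proper algebraic subgroup of $G^3$;
\item the condition $n \notin \mathcal{N}$ is preserved, i.e.\ $[n]\pi(C_1) \not\subseteq \pi(C_2)$, for all but finitely many $n$ in each relevant class.
\end{itemize}
Projections commute with $[n]$, so $x \mapsto \pi(x)$ sends solutions to solutions.

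The genuinely degenerate cases must be handled with Mordell--Lang (Faltings' theorem for abelian varieties), together with the fact that $\pi$ is finite-to-one on curves not contained in cosets. These cases include $C_1 \cup C_2$ lying in a coset of a $2g$-dimensional subgroup after projection, or $\pi(C_2)$ degenerating. The argument is to consider the finitely generated group spanned by such points, or to argue that an infinite set of solutions forces $C_1$ or $C_1 \cup C_2$ into a small subgroup, contradicting the hypotheses. The requirement $N \ge 3$ is used so that a generic projection to $G^3$ retains the non-degeneracy.

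\textbf{Core case.} Work with the uniformization $\exp\colon \mathbb{C}^{3g} \to G^3(\mathbb{C})$. Each point $x \in C_1$ with $nx \in C_2$ gives a logarithm $u$ such that $\exp(u) \in C_1$ and $\exp(nu) \in C_2$. In the spirit of Kirby's framework, I would use an abelian analogue of the Bays--Kirby--Wilkie transcendence bound, which comes from Ax--Schanuel for abelian varieties. The bound should say that for exponentially transcendental $\lambda$ relative to the field of definition, $\operatorname{td}(u, \exp u, \exp \lambda u)$ is large unless $u$ satisfies $\mathrm{End}(G)$-linear relations.

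Suppose there are infinitely many solutions. First suppose they occur with unboundedly many $n$. By compactness, pass to an elementary extension or a nonstandard $n$, which behaves like an exponentially transcendental exponent. Since $C_1$ and $C_2$ are curves and one of them is not over $\mathbb{Q}^{\mathrm{alg}}$, the transcendence degree count then gives a contradiction: roughly, $\dim C_1 + \dim C_2 = 2 < 3$, beyond what the transcendental parameter can absorb. If instead only finitely many $n$ occur, then for each single $n$ the intersection $[n]C_1 \cap C_2$ is finite because $[n]C_1 \not\subseteq C_2$ and both are irreducible curves.

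To avoid a naive compactness argument, I would use o-minimality (Pila--Wilkie on a fundamental domain) to bound the height of the relevant $\mathrm{End}(G)$-relations. This confines the unlikely points to finitely many subgroup families.

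\textbf{Main obstacle.} I expect the hardest step to be the transcendence bound. It has to be adapted from the $\mathbb{G}_m$ setting, where one raises to a power $\lambda$, to abelian exponentials with a module over $\mathrm{End}(G)$, which is an order in a possibly noncommutative division algebra. The bound must also be strong enough to handle $g>1$, where dimensions scale by $g$. My first attempt would be to derive it from the Ax--Schanuel theorem for abelian varieties, applied to the graph of $u \mapsto (\exp u, \exp(\lambda u))$ over a differential field in which $\lambda$ is a nonconstant parameter. This follows the Bays--Kirby--Wilkie proof, which similarly goes through Ax's theorem.
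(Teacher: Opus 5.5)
There is a genuine gap, and it sits in the core case. You propose to treat a nonstandard $\tilde n$ as an ``exponentially transcendental exponent'' for the full uniformization $\exp\colon\mathbb{C}^{3g}\to G^3(\mathbb{C})$, and to apply a Bays--Kirby--Wilkie bound to logarithms $u$ of the solutions. This fails because of the periods. If $\omega\neq 0$ is a period, then in the ultrapower $\exp_G(\tilde n\omega)=0$. The extensions generated by $(\omega,0)$, and then by $(\tilde n\omega,0)$, have predimension at most $g-g=0$ and $1-g\le 0$ respectively. So $\tilde n$ already lies in the $\Gamma$-closure of the empty set, no bound of the form of Theorem~\ref{additiveIndependenceTheorem} with $\lambda=\tilde n$ is available for $u$, and your ``$2<3$'' count has nothing behind it.

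Relatedly, you never say how the hypothesis that $C_1$ or $C_2$ is not defined over $\mathbb{Q}^{\mathrm{alg}}$ is used. Without it the statement is a genuine Zilber--Pink instance. The missing idea is the infinitesimal deformation:
\begin{enumerate}
\item Write the field of definition as $\mathrm{acl}(e_1,\dots,e_k)$. Saturation and strong homogeneity of an o-minimal structure defining $\exp_G$ on a fundamental domain give an automorphism $\phi$ fixing $\mathbb{Q}^{\mathrm{alg}}$, with $\phi(e_i)-e_i$ infinitesimal and $\phi(e)$ dcl-independent over $\mathbb{C}$.
\item Each solution $x=\exp_G(a)$ then has a companion solution $z=\exp_G(b)$ on the conjugate curves with $b-a$ infinitesimal.
\item Apply the transcendence bound to $\tilde b-\tilde a$ and $\tilde n(\tilde b-\tilde a)$, for the exponential \emph{restricted to infinitesimals}, whose kernel is trivial. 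Lemma~\ref{keyLemma} (via Ax--Schanuel, derivations, and $\overline{\mathrm{hcl}}\subseteq\mathrm{dcl}$) shows that $\Gamma\mathrm{cl}$ fixes algebraically closed subfields of $\tilde{\mathbb{C}}$. That is what puts $\tilde n$ outside $\Gamma\mathrm{cl}(K,K',u)$ and yields $3g\le\mathrm{td}\le 2$.
\item The cases where $\tilde z-\tilde x$ is additively dependent must be treated separately. The case $\tilde z=\tilde x$ is exactly where $C_i=\phi(C_i)$ forces definability over $K\cap K'=\mathbb{Q}^{\mathrm{alg}}$.
\end{enumerate}
Your Pila--Wilkie alternative does not fill the gap. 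Counting on a fundamental domain needs a lower bound for Galois orbits of the unlikely points, which you do not have, and it again makes no use of the transcendence hypothesis.

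The reduction also omits the one property of $\pi$ that the core case needs: at least one of $\pi(C_1)^{\mathrm{Zar}}$, $\pi(C_2)^{\mathrm{Zar}}$ must still fail to be defined over $\mathbb{Q}^{\mathrm{alg}}$. A projection can easily lose this, since the transcendental coefficients may sit in discarded coordinates. The paper's Claim in Theorem~\ref{thisIsIt} supplies it as follows.
\begin{enumerate}
\item Take a maximal coordinate projection $\pi'\colon G^N\to G^k$ with $\pi'(C_1)$ in no proper subgroup. Then $k\ge 3$, and every $3$-coordinate subprojection is again non-degenerate.
\item By maximality, each further coordinate puts $C_1$ into a proper subgroup $H$.
\item The values $y=nx$ form an infinite set, by Corollary~\ref{torsionCorollary}, and lie in $H\cap C_2$. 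Hence $C_2\subseteq H$ as well.
\item So both curves are finite over their images in $G^k$ via $\mathbb{Q}^{\mathrm{alg}}$-defined subgroups. If every triple had $\mathbb{Q}^{\mathrm{alg}}$-definable images, then $C_1$ and $C_2$ would be defined over $\mathbb{Q}^{\mathrm{alg}}$.
\end{enumerate}
You also need the separate case $C_1\subseteq H$ with $\dim H=2g$, where no good projection exists. There the same argument forces $C_2\subseteq H$, contradicting the hypothesis on $C_1\cup C_2$. Your second bullet is fine: it follows from the finiteness of $\mathcal{N}'$ (Corollary~\ref{indexIsFinite}) together with finiteness of each fixed-$n$ fibre.
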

Both Fact \ref{OGTheorem} and Theorem \ref{OGEllipticConjecture} may be viewed as generic forms of Zilber-Pink due to their additional transcendence assumptions. Results of this kind  have appeared in a number of related settings. Pila and Scanlon proved function field versions of Zilber-Pink for variations of Hodge structures using differential-algebraic methods in \cite{pila2021effectivetranscendentalzilberpinkvariations}. In the abelian variety setting, Barroero and Dill in \cite{barroero2019zilberpink} reduced Zilber-Pink over an algebraically closed field of characteristic 0 to the corresponding problem over $\mathbb{Q}^\mathrm{alg}$ using the $\mathbb{Q}^\mathrm{alg}$-trace. In particular, for the corresponding statement of Theorem \ref{OGEllipticConjecture} where $G$ is not defined over $\mathbb{Q}^\mathrm{alg}$, Barroero and Dill cover this case in \cite[Corollary 1.7.]{barroero2019zilberpink}. More generally in \cite{BarroeroDill2025}, they also extended these results to their framework of distinguished categories which includes commutative algebraic groups and mixed Shimura varieties.

More recently, Klingler and Tayou in \cite{klingler2025zilberpink} proved Zilber-Pink for subvarieties of mixed Shimura varieties satisfying a non-definability condition over $\mathbb{Q}^\mathrm{alg}$ on all non-trivial quotient images and, using similar techniques to that of Pila and Scanlon, Aslanyan, Eterovi\'c and Fowler in \cite{aslanyan2025modular} established a modular Zilber-Pink result for varieties under a related non-definablity condition for projections. 

After completing this work, the author became aware of independent forthcoming work of Barroero, Dill and K\"uhne in \cite{DillBarroeroKuhne} on related questions and, most recently, OpenAI also announced a proof of Zilber-Pink for abelian varieties in \cite{OAI:The-Abelian-Zilber-Pink-Conjecture-September-24-2026}. 

To prove Theorem \ref{OGEllipticConjecture}, we will follow a similar strategy to that of Boxall.
That is, in Section \ref{algebro-geometricPrelims}, we reduce Theorem \ref{OGEllipticConjecture} to the following.  

\begin{theorem}\label{OGTheoremReduced}
    Let $G$ be a simple abelian variety defined over $\mathbb{Q}^\mathrm{alg}$ and let $C_1, C_2 \subseteq G^3(\mathbb{C})$ be irreducible closed algebraic curves with at least one of the curves not defined over $\mathbb{Q}^\mathrm{alg}$. Suppose that there does not exist a proper algebraic subgroup $H \subseteq G^3(\mathbb{C})$ such that $C_1 \subseteq H$. Let $\mathcal{N} = \{n \in \mathbb{N} \ | \ [n]C_1 \subseteq C_2\}$. Then $\bigcup_{n \in \mathbb{N}\setminus\mathcal{N}}\{x \in C_1 \ | \ nx \in C_2\}$ is finite.
\end{theorem}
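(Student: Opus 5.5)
We plan to follow Boxall's strategy for Fact \ref{OGGmTheoremReduced}, replacing the exponential map of $\mathbb{G}_m$ by the analytic uniformisation $\exp_G\colon \mathbb{C}^g \to G(\mathbb{C})$. Since $G$ is defined over $\mathbb{Q}^\mathrm{alg}$, the period lattice $\Lambda$ and $\mathrm{End}(G)$ are fixed. Raising to the $n$-th power is replaced by multiplication by $n$, which lifts to scalar multiplication on $\mathbb{C}^{3g}$.

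Suppose, for contradiction, that there are infinitely many pairs $(x,n)$ with $x \in C_1$, $nx \in C_2$ and $n \notin \mathcal{N}$. The argument then has four steps.

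First, we dispose of the case where infinitely many distinct points arise from a single $n$. If $n$ is fixed, then $C_1 \cap [n]^{-1}C_2$ is infinite. Both are irreducible curves, so $[n]C_1 = C_2$, giving $n \in \mathcal{N}$, a contradiction. Hence infinitely many distinct $n$ occur. Note also that the $x$ occurring cannot all be torsion points with bounded order.

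Second, we need a transcendence input: an Ax--Schanuel or Bays--Kirby--Wilkie type bound for "raising to a generic power" in $G$. Let $K$ be a countable algebraically closed field of definition for $C_1, C_2$, of transcendence degree at least $1$ over $\mathbb{Q}^\mathrm{alg}$. Take $\lambda$ exponentially transcendental over $K$ in a suitable sense, and $u_1,\dots,u_m \in \mathbb{C}^{3g}$ that are $\mathrm{End}(G)\otimes\mathbb{Q}$-linearly independent modulo $\Lambda$. The bound should read
\[
\mathrm{td}\bigl(\exp(u),\exp(\lambda u)/K(\lambda)\bigr) \;\ge\; 2\dim_{\mathrm{End}}(u) \cdot g \;-\;\text{(correction)}.
\]
Here the correction term is governed by the smallest algebraic subgroup containing the points. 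We would adapt the Bays--Kirby--Wilkie proof via Ax's theorem for abelian varieties (Ax--Schanuel for $\exp_G$) together with Kirby's partial exponential-field framework, now for the $\mathrm{End}(G)$-module structure. \textbf{This adaptation is the main obstacle.} In the torus case the relevant field is $\mathbb{Q}$ and there is a single exponent. Here we must handle $\mathrm{End}(G)\otimes\mathbb{Q}$, a division algebra, and the $2g$-dimensional real period structure. We would first try to treat $\lambda$ as a generic element of $\mathbb{C}$ commuting with $\mathrm{End}(G)$, and run the derivation/Ax argument on $G^3 \times G^3$.

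Third, we convert the infinitely many pairs into a contradiction using model theory and o-minimality, as Boxall does. Pass to an elementary extension, or take a nonprincipal ultraproduct over the solutions. This produces a point $x \in C_1$ and a "nonstandard multiplier" $\lambda$ with $\exp(\lambda\log x) \in C_2$. Counting dimensions, the pair $(x, \lambda x)$ lies in the curve $C_1 \times C_2$, so it has transcendence degree at most $2$ over $K(\lambda)$. The bound from the second step then forces $\log x$ to lie in a proper $\mathrm{End}(G)$-subspace defined over $\mathbb{Q}$. In other words, $x$ lies in a proper algebraic subgroup.

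Alternatively, and more concretely, we can use a Pila--Wilkie counting argument on the definable set
\[
\{(z,t) \in F \times \mathbb{R} : \exp(z) \in C_1,\ \exp(tz) \in C_2\},
\]
where $F$ is a fundamental domain. This would show that infinitely many $n$ force a semialgebraic, hence algebraic, family.

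Finally, we rule out the subgroup case. The point $x$ ranges over infinitely many points of $C_1$, each lying in some proper algebraic subgroup $H$. By Mordell--Lang-type finiteness (Faltings) for the curve $C_1$, which is not contained in any proper subgroup, together with the non-$\mathbb{Q}^\mathrm{alg}$-definability hypothesis used to pass to generic points, only finitely many such $x$ can occur. This contradicts the assumption and proves Theorem \ref{OGTheoremReduced}.
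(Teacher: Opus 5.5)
There is a genuine gap, and it is not mainly the adaptation of Bays--Kirby--Wilkie to $\mathrm{End}_0(G)$ that you flag. That part is fairly routine: the paper's Theorem~\ref{additiveIndependenceTheorem} gives $\mathrm{tr.deg}\ge gn$ for $n$ additively independent points, not $2gn$ minus a correction, and $3g>2$ is all that is needed. The real problem is your third step. A bound of this type says something only when $\lambda=\tilde n$ lies outside a $\Gamma$-closed field containing the field of definition. Applying it directly to $\tilde x$ and $\tilde n\tilde x$ gives you no way to arrange this.

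If you work in an ultrapower of the full uniformisation, so that $\exp_G(\tilde n\log\tilde x)$ makes sense, then $\tilde n$ is automatically exponentially algebraic. For a period $\omega$, the kernel point $(\tilde n\omega,0_G)$ generates an extension of predimension at most $1-g\le 0$ over a field containing $\omega$. So every nonstandard integer lies in the $\Gamma$-closure of the periods, and the bound is vacuous. If instead you work o-minimally, with $\exp_G$ definable only on a fundamental domain, then $\exp_G(\tilde n\tilde a)$ is not available, since $\tilde n\tilde a$ leaves every bounded region.

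Your Pila--Wilkie alternative fails for the same reason: the set is not definable for unbounded $t$. Moreover the relevant points are transcendental, so there are no Galois-orbit bounds to play against a counting theorem. A symptom of the gap is that the hypothesis that one of $C_1,C_2$ is not defined over $\mathbb{Q}^{\mathrm{alg}}$ does no real work in your argument. It would therefore apply equally to curves over $\mathbb{Q}^{\mathrm{alg}}$, where there is nothing to deform and the problem is a genuine Zilber--Pink question.

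The missing idea is Boxall's infinitesimal deformation, and this is where that hypothesis enters. The paper applies an automorphism $\phi$ moving the transcendental parameters $e_i$ of the curves to infinitesimally close, $\mathrm{dcl}$-generic $e_i'$, giving curves $C_1',C_2'$. Each solution $(x_i,n_i)$ with logarithm $a_i$ is lifted to a solution $(z_i,n_i)$ on $(C_1',C_2')$ with logarithm $b_i$, where $b_i-a_i$, and hence $n_i(b_i-a_i)$, is infinitesimal. In the ultrapower, the $\Gamma$-structure is the graph of $\exp_G$ restricted to infinitesimals, which has trivial kernel. The bound is then applied to $\tilde z-\tilde x=\exp_G(\tilde b-\tilde a)$ and $\tilde n(\tilde z-\tilde x)$ over $\Gamma\mathrm{cl}(K,K',u)$, not to $\tilde x,\tilde n\tilde x$.

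Exponential transcendence of $\tilde n$ then comes from two results, combined with exchange against the $\mathrm{dcl}$-independent $e'$:
\begin{itemize}
\item Corollary~\ref{godTierCorollary}, that $\Gamma\mathrm{cl}\subseteq\mathrm{dcl}(\,\cdot\,\cup\mathbb{C})$, proved via Ax--Schanuel and $\mathrm{DD}_\Delta=\overline{\mathrm{hcl}}$;
\item Lemma~\ref{keyLemma}, that $\Gamma\mathrm{cl}(C)=C$ for algebraically closed $C\subseteq\tilde{\mathbb{C}}$, which uses $\tilde{\mathcal{I}}\cap\tilde{\mathbb{C}}=\{0\}$.
\end{itemize}
The degenerate cases are split by the additive dimension of $\tilde z-\tilde x$. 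The case $\tilde z=\tilde x$ forces $C_j=C_j'$, hence definability over $K\cap K'=\mathbb{Q}^{\mathrm{alg}}$, which contradicts the hypothesis.

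Your last step also misuses Mordell--Lang. The union of all proper algebraic subgroups is not a finite-rank group, and for $g=1$ every codimension-one subgroup $\ker\psi$ with $\psi|_{C_1}$ non-constant meets $C_1$. What \L o\'s actually gives from a relation with coefficients in $\mathrm{End}(G)$ is one fixed subgroup $H$, and finiteness then just says that $C_1\cap H$ is finite.
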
  
Building on Zilber's work towards quasiminimality of $\mathbb{C}_\mathrm{exp}$ in \cite{Zilber2005}, Bays and Kirby in \cite{Bays_2018} provided a more general framework, known as $\Gamma$-fields, that allows one to study partial exponential fields and their analogues for semi-abelian varieties. Their framework provides the setting we will need in order to generalise Boxall's arguments. In Section \ref{gammaFields}, in the case of powers of simple abelian varieties, we introduce a slightly more flexible version of their construction. Namely, denoting the endomorphism ring of a simple abelian variety $G$ as $\mathrm{End}(G)$, a $\Gamma$-field is a field equipped with a divisible $\mathrm{End}(G)$-submodule of $(\mathbb{G}_{\rm{a}}^{\mathrm{dim}G} \times G)(F)$ where we require only that the underlying field contains the field generated by its associated module and we do not require this module to contain torsion (see Definition \ref{gammaFieldDefinition}). 
%
Moreover, we will refer to the $\Gamma$-field extensions, which play a role in our setting similar to that of finitely generated extensions in Bays and Kirby's framework, as exponentially finitely generated extensions. These are $\Gamma$-field extensions whose generators arise from their associated $\mathrm{End}(G)$-modules. 

In Section \ref{gammaClosureSubsection}, we reintroduce the $\Gamma$-closure $\Gamma\mathrm{cl}$, an analogue of exponential closure in our setting, also due to Bays and Kirby in \cite{Bays_2018} using our definition of $\Gamma$-fields and exponentially finitely generated extensions. In Section \ref{BKWSubsection}, we also prove the following analogue of the Bays-Kirby-Wilkie result in \cite{aSchanuelProperty}.

\begin{theorem}\label{BKWResultIntro}
    Let $G$ be a simple abelian variety of dimension $g$, let $F$ be a $\Gamma$-field with $F_\Gamma$ its associated $\mathrm{End}(G)$-module and let $n \in \mathbb{N}$. Suppose further that $F_\Gamma$ is the graph of a map denoted as $\mathrm{exp}_G$. Denoting $F_{\Gamma, 1}$ as the domain of $\mathrm{exp}_G$, let $a = (a_1, \ldots, a_n) \in F_{\Gamma, 1}$ and let $K \subseteq F$ such that $\Gamma\mathrm{cl}^F(K) = K$. Let $\lambda \in F$ such that $\lambda \not\in K$. Suppose that $\lambda a = (\lambda a_1, \ldots, \lambda a_n) \in F_{\Gamma, 1}$ and assume that $\mathrm{exp}_G(a) = (\mathrm{exp}_G(a_1), \ldots,\mathrm{exp}_G(a_n))$ is additively independent. Then \begin{align*}
        \mathrm{td}(\mathrm{exp}_G(a), \mathrm{exp}_G(\lambda a)/K, \lambda) \geq gn.
    \end{align*}
\end{theorem}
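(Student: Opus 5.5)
We adapt the Bays--Kirby--Wilkie argument of \cite{aSchanuelProperty} to the $\Gamma$-field setting, with the transcendence input coming from Ax's theorem for the abelian variety $G$. The proof is by contradiction. Suppose that
\[
\mathrm{td}(\mathrm{exp}_G(a), \mathrm{exp}_G(\lambda a)/K, \lambda) < gn.
\]
Consider the field $L = K(\lambda, \mathrm{exp}_G(a), \mathrm{exp}_G(\lambda a))^{\mathrm{alg}}$. We use the $\Gamma$-closure $\Gamma\mathrm{cl}$ of Section \ref{gammaClosureSubsection}, and the fact that $K$ is $\Gamma\mathrm{cl}$-closed, to produce a predimension-type inequality over $K$. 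The relevant predimension, for a tuple $b$ of elements of $F_{\Gamma,1}$ that is $\mathrm{End}(G)$-linearly independent over $K$, is
\[
\delta(b) = \mathrm{td}(b, \mathrm{exp}_G(b)/K) - g\,\mathrm{ldim}_{\mathrm{End}(G)}(b/K).
\]
Closedness of $K$ should give $\delta(b) \geq 0$ for every finite such tuple.

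\textbf{Step 1 (Reduction).} First we reduce to the case where $\lambda$ is not in the field of quotients of $\mathrm{End}(G)$ acting on $K$. This holds automatically, since $\lambda \notin K$ and scalar multiplication by elements of $\mathrm{End}(G)\otimes\mathbb{Q}$ preserves closedness. We then choose a maximal subtuple of $(a, \lambda a)$ that is $\mathrm{End}(G)$-linearly independent over $K$.

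\textbf{Step 2 (The BKW trick).} We consider $a$ and $\lambda a$ simultaneously. Apply the predimension inequality to the tuple $(a, \lambda a)$, or to its independent part, of $\mathrm{End}(G)$-dimension $m \leq 2n$. This gives
\[
\mathrm{td}(a, \lambda a, \mathrm{exp}_G(a), \mathrm{exp}_G(\lambda a)/K) \geq g m .
\]
On the other side, the linear relation $\lambda a_i$ contributes at most $\mathrm{td}(a/K, \lambda) + 1$ for the additive coordinates. So we need $m$ close to $2n$, that is, few $\mathrm{End}(G)$-linear relations between $a$ and $\lambda a$ over $K$. Any such relation has the form $\sum \alpha_i a_i + \sum \beta_i \lambda a_i \in K$. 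Since $\lambda \notin K$, we can extract from it a nontrivial $\mathrm{End}(G)$-relation among the $a_i$ over $K$. The additive independence of $\mathrm{exp}_G(a)$ should rule this out, after possibly passing to a subtuple and inducting on $n$.

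\textbf{Step 3 (Dimension count).} With $m = 2n$ we obtain
\[
\mathrm{td}(a, \mathrm{exp}_G(a), \mathrm{exp}_G(\lambda a)/K, \lambda) \geq 2gn - 1.
\]
Bounding $\mathrm{td}(a/K,\lambda) \leq gn$ (each $a_i$ lies in $\mathbb{G}_a^g$), we subtract and get $\mathrm{td}(\mathrm{exp}_G(a), \mathrm{exp}_G(\lambda a)/K, \lambda) \geq gn - 1$. To remove the loss of $1$, and to handle partial linear dependence, we argue as BKW do. We vary $\lambda$ over a generic realisation, or use an inductive argument on $n$ that passes to the quotient by the $\mathrm{End}(G)$-span of a dependent subtuple. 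Simplicity of $G$ ensures that every subgroup relation is a genuine $\mathrm{End}(G)$-linear relation.

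\textbf{Main obstacle.} We expect the main obstacle to be Step 2/3, namely controlling the $\mathrm{End}(G)$-linear dependencies between $a$ and $\lambda a$ over $K$ and recovering the sharp bound $gn$ rather than $gn-1$. Here the possibility that $\lambda$ is algebraic over $K(a)$ must be handled carefully. Our plan is to split into two cases. If $\lambda \notin K(a,\mathrm{exp}_G(a))^{\mathrm{alg}}$, the extra transcendence of $\lambda$ compensates for the loss. Otherwise we apply the predimension inequality to the larger tuple and exploit that $\lambda$ then adds nothing to the transcendence degree.
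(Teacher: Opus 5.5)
\textbf{Verdict: there is a genuine gap.} Your overall frame matches the paper's: apply the closedness of $K$ to $b=(a,\lambda a)$, discount the transcendence of the additive coordinates, then do BKW-style linear algebra. But two ideas that make the argument work are missing.

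\textbf{First gap: the loss of $1$.} Closedness of $K$ gives more than $\delta\ge 0$. By definition, an exponentially finitely generated extension $B$ of $K$ with $\delta(B/K)\le 0$ equals $K$, and $\delta$ is integer-valued, so every proper such extension has $\delta(B/K)\ge 1$. The extension generated by $b$ is proper. Some coordinate $a_{i,j}$ of $a$ is non-zero, since otherwise $\mathrm{exp}_G(a_i)=0_G$, contradicting additive independence. Then $\lambda=\lambda a_{i,j}/a_{i,j}$ with $\lambda\notin K$. The same observation gives $\lambda\in K(b)$, hence
\[
\mathrm{td}(b/K)\le 1+\mathrm{td}(b/K,\lambda), \qquad \mathrm{td}(\mathrm{exp}_G(b)/K,b)\le\mathrm{td}(\mathrm{exp}_G(b)/K,\lambda).
\]
The strict inequality $\delta\ge 1$ exactly cancels the $1$ coming from $\lambda$. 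This is how the paper obtains $gn$ rather than $gn-1$. Your proposed remedies do not supply it. The element $\lambda$ is given, so there is nothing to vary generically. Your case split also fails: if $\lambda$ is transcendental over $K(a,\mathrm{exp}_G(a))$, redoing your count still only yields $gn-1$.

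\textbf{Second gap: linear dependencies in Step 2.} Additive independence of $\mathrm{exp}_G(a)$ says that $a$ is $\mathrm{End}_0(G)$-linearly independent modulo $\mathrm{ker}(\mathrm{exp}_G)$, not modulo $\pi_1(K_\Gamma)$. So you cannot force $m=2n$, and no ``extraction'' argument rules such relations out. For example, take $n=1$ and $a_1\in\pi_1(K_\Gamma)$ with $\mathrm{exp}_G(a_1)$ non-torsion. Then $m=1$, and even with the strict inequality your bound $\mathrm{td}(a/K,\lambda)\le gn$ only gives $\mathrm{td}(\mathrm{exp}_G(a),\mathrm{exp}_G(\lambda a)/K,\lambda)\ge 0$. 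What is needed is the finer bound $\mathrm{td}(b/K,\lambda)\le g\,\mathrm{ldim}_{\mathrm{End}_0(G)(\lambda)}(b/K)$, together with a lower bound on the \emph{difference} of dimensions rather than on $m$:
\[
\mathrm{ldim}_{\mathrm{End}_0(G)}(b,\mathrm{exp}_G(b)/K_\Gamma)-\mathrm{ldim}_{\mathrm{End}_0(G)(\lambda)}(b/K')\;\ge\;\mathrm{ldim}_{\mathrm{End}_0(G)}(b/\mathrm{ker})-\mathrm{ldim}_{\mathrm{End}_0(G)(\lambda)}(b/\mathrm{ker})\;\ge\; n .
\]
The paper proves the first inequality in four moves:
\begin{enumerate}
\item choose a finite tuple $l$ from $K_\Gamma$ realising the dimension over $K_\Gamma$;
\item let $K'$ be the module generated by $l$ and $\mathrm{ker}$;
\item show, using $\lambda\notin K$, that $l$ has the same $\mathrm{End}_0(G)$- and $\mathrm{End}_0(G)(\lambda)$-dimension modulo $\mathrm{ker}$;
\item apply the addition formula.
\end{enumerate}
The second inequality is the BKW linear-algebra count modulo $\mathrm{ker}$. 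This is the only place where additive independence of $\mathrm{exp}_G(a)$ is used. Your ``pass to a quotient by a dependent subtuple and induct on $n$'' does not identify this mechanism.

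Finally, Ax's theorem plays no role here. $F$ is an abstract $\Gamma$-field, and the only transcendence input is the closedness of $K$.
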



In Section \ref{otherPregeometrySubsection}, we discuss some further useful related pregeometries and relate these to $\Gamma\mathrm{cl}$. In Section \ref{finitenessResultChapter}, we conclude the paper by proving Theorem \ref{OGTheoremReduced} using results in Section \ref{otherPregeometrySubsection} together with Theorem \ref{BKWResultIntro}.

\section*{Acknowledgements}

The author would like to thank Vahagn Aslanyan, Gareth Jones, Martin Orr and Rodolphe Richard for helpful comments and conversations during the writing of this paper. The author is especially grateful to Gareth Boxall, who supervised the MSc project that this work was based on, for their patience, numerous discussions and comments during the preparation of this work. Particularly, the author would like to thank Gareth for suggesting the proof for Lemma \ref{algebraicAnalyticClosureRelatedToDefinableClosure}. 

\section{Some algebro-geometric preliminaries}\label{algebro-geometricPrelims}
In this section, we recall some standard algebro-geometric facts needed along with proving the reduction of Theorem \ref{OGEllipticConjecture} to Theorem \ref{OGTheoremReduced} following arguments of Boxall in \cite[Section 2]{boxall2024points}. Henceforth, let $G$ denote a simple abelian variety, namely a connected projective group variety that contains no proper abelian subvarieties, defined over $\mathbb{Q}^\mathrm{alg}$, denote $g := \mathrm{dim} G$, let $\mathrm{End}(G)$ denote its ring of endomorphisms and let $\mathrm{End}_0(G) := \mathrm{End(G)} \otimes_\mathbb{Z} \mathbb{Q}$ denote the endomorphism algebra of $G$.

Recall that, in $\mathbb{G}_{\rm{m}}^N(\mathbb{C})$, algebraic subgroups of codimension $r \in \{0, \ldots, N\}$ are given by the equations $X_1^{a_{i,1}} \cdot \dots \cdot X_N^{a_{i,n}} = 1$ with $a_{i, 1}, \ldots, a_{i, N} \in \mathbb{Z}$ for all $i \in \{1, \ldots, r\}$ such that $(a_{1,1}, \ldots, a_{1,N}), \ldots, (a_{r,1}, \ldots, a_{r,N})$ are $\mathbb{Q}$-linearly independent. It is well-known that powers of simple abelian varieties boast a similar property, but instead only have that each algebraic subgroup is contained in an abelian subvariety given by an analogous system of equations. The following can be deduced from various literature such as using similar arguments of the proof of \cite[Lemma 1]{Viada2003}. 
\begin{fact}\label{subgroupSatisfiesAdditiveRelations}
Let $N \geq 2$. Let $H \subseteq G^N(\mathbb{C})$ be an algebraic subgroup of codimension $gr$ for some $r \in \{1, \ldots, N\}$. Then there exists an algebraic subgroup $H'$ of $G^N(\mathbb{C})$ such that $H \subseteq H'$ and $H'$ is given as the solution set of $r$ equations of the form \begin{align*}
    \sum\limits_{j = 1}^{N}\lambda_{i, j}(x_j) = 0
\end{align*} with 
the $\lambda_{i, j} \in \mathrm{End}(G)$ for all $i \in \{1, \ldots, r\}$ such that $(\lambda_{1, 1}, ..., \lambda_{1, N}), ..., (\lambda_{r, 1}, \ldots, \lambda_{r, N})$ are left $\mathrm{End}_0(G)$-linearly independent. 
\end{fact}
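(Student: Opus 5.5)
The plan is to pass to the connected component and work with tangent spaces and period lattices. Let $H^0$ be the identity component of $H$. It is an abelian subvariety of $G^N$ of the same dimension $g(N-r)$. It then suffices to produce an $H'$ of the stated form containing $H^0$. Indeed, $H/H^0$ is a finite group, say of exponent $m$. If $H^0 \subseteq H'$ with $H'$ cut out by $\sum_j \lambda_{i,j}(x_j)=0$, then $H$ is contained in the subgroup cut out by $\sum_j (m\lambda_{i,j})(x_j)=0$. The rows $(m\lambda_{i,1},\dots,m\lambda_{i,N})$ remain left $\mathrm{End}_0(G)$-linearly independent, since $m$ is invertible in $\mathrm{End}_0(G)$.

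For $H^0$ itself, I would use Poincaré complete reducibility. There is a complementary abelian subvariety $B \subseteq G^N$ with $H^0 + B = G^N$ and $H^0 \cap B$ finite. This gives an isogeny $G^N \to G^N/H^0$ whose kernel is $H^0$. The quotient $G^N/H^0$ is isogenous to $B$. Since $G$ is simple, $B$ is isogenous to $G^r$: every simple factor of an abelian subvariety of $G^N$ is isogenous to $G$, and a dimension count gives exactly $r$ factors.

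Composing, we obtain a surjective homomorphism $\phi : G^N \to G^r$ with $H^0 \subseteq \ker \phi$. Any homomorphism $G^N \to G^r$ is given by an $r\times N$ matrix $(\lambda_{i,j})$ of elements of $\mathrm{Hom}(G,G)=\mathrm{End}(G)$, via $\phi(x) = (\sum_j \lambda_{i,j}(x_j))_i$. Set $H' := \ker\phi$. Then $H^0 \subseteq H'$ and $H'$ is given by the required $r$ equations.

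The main point to check is left $\mathrm{End}_0(G)$-linear independence of the rows. Suppose $\sum_i \mu_i(\lambda_{i,1},\dots,\lambda_{i,N}) = 0$ with $\mu_i \in \mathrm{End}_0(G)$ not all zero. Clearing denominators, we may assume $\mu_i \in \mathrm{End}(G)$. Then the homomorphism $\mu=(\mu_1,\dots,\mu_r): G^r \to G$ satisfies $\mu\circ\phi = 0$. Because $\phi$ is surjective, this forces $\mu = 0$. Hence every $\mu_i = 0$ as an endomorphism, and therefore also as an element of $\mathrm{End}_0(G)$, which is a contradiction.

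Equivalently, the rows are independent exactly when $\dim \ker\phi = g(N-r)$. Surjectivity gives this by counting dimensions, and it is the step where care is needed. In particular, one must make sure that the chosen map $G^N/H^0 \to G^r$ is an isogeny rather than merely a homomorphism. This is where simplicity of $G$, used through the fact that $\mathrm{End}_0(G)$ is a division algebra, enters. I expect everything else to be routine.
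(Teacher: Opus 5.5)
Your proof is correct. It is the standard argument that the paper invokes by citing the proof of \cite[Lemma 1]{Viada2003} without writing it out: reduce to $H^0$ using the exponent of $H/H^0$, use Poincar\'e reducibility and simplicity of $G$ to get $G^N/H^0 \sim G^r$, read off an $\mathrm{End}(G)$-matrix from the resulting surjection $G^N \to G^r$, and get left independence of its rows from surjectivity. Some slips that do not affect the logic: the quotient map $G^N \to G^N/H^0$ is not an isogeny (only $G^N/H^0 \to G^r$ is); simplicity of $G$ is used in showing $B \sim G^r$, not in the independence step, which needs only surjectivity of $\phi$; and the tangent-space and period-lattice plan you announce at the start is never used.
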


As in the algebraic torus case, we will need to make use of Mordell-Lang, proven in a sequence of papers by Vojta, Faltings and Hindry, for curves in the case of powers of simple abelian varieties. 


\begin{fact}[Mordell-Lang \mbox{\cite{Hindry1988}}]\label{torsionTheorem}
    Let $C \subseteq G^N(\mathbb{C})$ be an irreducible closed algebraic curve for some $N \in \mathbb{N}$ with $N \geq 2$. Let $\Gamma$ be a finite rank subgroup of $G^N(\mathbb{C})$. If $C \cap \Gamma$ is infinite, then $C$ is a translate of an algebraic subgroup of $G^N(\mathbb{C})$. In particular, if $g > 1$, then $C \cap \Gamma$ is finite. 
\end{fact}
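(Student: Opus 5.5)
The plan is to deduce both assertions from the general Mordell--Lang theorem for abelian varieties (Faltings for finitely generated groups, extended by Hindry to finite rank groups), together with a dimension count that uses the simplicity of $G$. The version we invoke is the following: if $A$ is an abelian variety over $\mathbb{C}$, $X \subseteq A$ is a closed subvariety and $\Gamma \subseteq A(\mathbb{C})$ has finite rank, then $X \cap \Gamma$ is a finite union of sets $(\gamma_i + B_i) \cap \Gamma$. Here each $\gamma_i \in \Gamma$, each $B_i$ is an abelian subvariety of $A$, and $\gamma_i + B_i \subseteq X$.

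\textbf{Step 1 (first assertion).} Apply this with $A = G^N$ and $X = C$.
\begin{itemize}
\item If $C \cap \Gamma$ is infinite, some $(\gamma_i + B_i)\cap \Gamma$ must be infinite, so $\dim B_i \geq 1$.
\item Since $\gamma_i + B_i \subseteq C$ and $\dim C = 1$, we get $\dim(\gamma_i + B_i) = 1$.
\item Then $\gamma_i + B_i$ is a closed irreducible subvariety of the irreducible curve $C$ of the same dimension, so $C = \gamma_i + B_i$.
\end{itemize}
Hence $C$ is a translate of the algebraic subgroup $B_i$. Because $C$ is irreducible, $B_i$ is in fact a one-dimensional abelian subvariety, that is, an elliptic curve inside $G^N$.

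\textbf{Step 2 (second assertion).} It suffices to show that every abelian subvariety $B \subseteq G^N$ has dimension divisible by $g$. Then, for $g > 1$, no one-dimensional abelian subvariety exists, and Step 1 forces $C \cap \Gamma$ to be finite.
\begin{itemize}
\item By Poincar\'e complete reducibility, $B$ is isogenous to a product $\prod_k E_k$ of simple abelian varieties.
\item Fix a factor $E_k$ and consider the composite of $E_k \to B$ (via the isogeny, up to a multiple) with the inclusion $B \hookrightarrow G^N$. This composite has finite kernel, so it is nonzero. Hence some coordinate projection $G^N \to G$ restricts to a nonzero homomorphism $E_k \to G$.
\item A nonzero homomorphism between simple abelian varieties has finite kernel, since the connected component of its kernel is a proper abelian subvariety of $E_k$. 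Its image is a nonzero abelian subvariety of $G$, hence all of $G$ by simplicity. So it is an isogeny and $\dim E_k = g$.
\end{itemize}
Therefore $\dim B$ is a multiple of $g$.

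The only real input is the cited Mordell--Lang theorem of Faltings and Hindry, which we use as a black box. The one point needing care is Step 2, the claim that $G^N$ contains no abelian curves when $g>1$. It cannot be read off from Fact~\ref{subgroupSatisfiesAdditiveRelations}, which only provides containment in a larger subgroup. That is why we argue directly via Poincar\'e reducibility as above.
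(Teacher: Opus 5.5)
Your proposal is correct and consistent with the paper, which gives no proof of its own and simply cites Mordell--Lang (Vojta, Faltings, Hindry) as a black box, as you do in Step~1. Your Step~2 is a correct justification of the ``in particular'' clause, which the paper leaves implicit: Poincar\'e reducibility together with the simplicity of $G$ shows that every abelian subvariety of $G^N$ has dimension divisible by $g$.
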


We now consider the analogues of \cite[Section 2, Corollary 2.4, Corollary 2.5 and Proposition 2.6]{boxall2024points}. The first is a consequence of Fact \ref{subgroupSatisfiesAdditiveRelations} and Fact \ref{torsionTheorem}, while the latter two are consequences of Corollary \ref{torsionCorollary}. Their proofs are essentially the same as in the toric case, so we will omit these. Following the strategy of Boxall, these will be useful later. Throughout this chapter, for any irreducible curve $C_1 \subseteq G^N(\mathbb{C})$, denote $[n]C_1 := \{nx \ | \ x \in C_1\}$.


\begin{corollary}\label{torsionCorollary}
    Let $N \geq 2$ and let $C \subseteq G^N(\mathbb{C})$ be an irreducible closed algebraic curve. Assume that at least one of the following conditions is satisfied: 
    \begin{enumerate}
        \item There is a non-torsion $x \in G^N(\mathbb{C})$ such that, for infinitely many $n \in \mathbb{N}$, $n\cdot x \in C$.
        \item There is a non-torsion $y \in G^N(\mathbb{C})$ such that, for infinitely many $n \in \mathbb{N}$, there exists $x \in C$ such that $n\cdot x = y$.
        \item $C$ contains infinitely many torsion points of $G^N(\mathbb{C})$.
    \end{enumerate} Then $\mathrm{dim} G = 1$ and $C$ is contained in an algebraic subgroup of $G^N(\mathbb{C})$ of dimension $1$. 
\end{corollary}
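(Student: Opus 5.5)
In each of the three cases I will produce a finite rank subgroup $\Gamma \subseteq G^N(\mathbb{C})$ with $C \cap \Gamma$ infinite, and then apply Fact \ref{torsionTheorem}. That fact gives two things. First, $g = 1$, since otherwise $C \cap \Gamma$ would be finite. Second, $C$ is a translate $z + B$ of a one-dimensional algebraic subgroup $B$. After that it only remains to show that the translate actually lies in an algebraic subgroup of dimension $1$. Here I use the connected component $B^0$, an elliptic curve $E \subseteq G^N$, and I may take $C = z + E$ since $C$ is irreducible.

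For the choice of $\Gamma$, case (1) is handled by $\Gamma = \mathbb{Z}x$, which contains the infinitely many distinct points $nx \in C$. These points are distinct because $x$ is non-torsion. For case (2), I take $\Gamma = \{w \in G^N(\mathbb{C}) \mid mw \in \mathbb{Z}y \text{ for some } m \in \mathbb{N}\}$, the division group of $\mathbb{Z}y$. This has rank one and so finite rank. For each admissible $n$ choose $x_n \in C$ with $nx_n = y$; then $x_n \in \Gamma$. The $x_n$ are pairwise distinct: if $x_n = x_m$ with $n \neq m$, then $(n-m)x_n = 0$, so $x_n$ is torsion and hence $y$ is torsion, a contradiction. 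For case (3), $\Gamma$ is the torsion subgroup, which has rank $0$.

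The remaining step, which I expect to be the only real content, is showing $z + E$ lies in a one-dimensional algebraic subgroup. The idea is to show that $z$ is torsion modulo $E$. Then $z + E \subseteq \langle z \rangle + E$, and this is an algebraic subgroup with identity component $E$, hence of dimension $1$.

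To prove $z$ is torsion modulo $E$, pass to the quotient $\pi : G^N \to G^N/E$. Since $E$ is an abelian subvariety of the abelian variety $G^N$, this quotient is again an abelian variety, and $\pi(C)$ is the single point $\pi(z)$. In case (3), infinitely many torsion points lie in $C$, so $\pi(z)$ is torsion. In case (1), the relation $\pi(nx) = \pi(z)$ for infinitely many $n$ gives $(n-m)\pi(x) = 0$ for some $n \neq m$. Thus $\pi(x)$ is torsion, and so $\pi(z)$, being a multiple of it, is torsion. In case (2), the relations $n\pi(x_n) = \pi(y)$ with $\pi(x_n) = \pi(z)$ give $n\pi(z) = \pi(y) = m\pi(z)$ for two distinct admissible $n, m$. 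Hence $(n-m)\pi(z) = 0$, and again $\pi(z)$ is torsion.

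In each case $z \in \langle z\rangle + E$ with $\langle z \rangle + E$ a finite union of translates of $E$, which completes the argument.
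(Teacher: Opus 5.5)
Your proof is correct. It follows the route the paper indicates: the paper omits the proof, saying it follows from Fact~\ref{subgroupSatisfiesAdditiveRelations} and Fact~\ref{torsionTheorem} as in the toric case, namely Mordell--Lang applied to a suitable finite-rank group forces $g=1$ and $C = z+E$, and one then shows the translation is torsion modulo $E$. The only difference is that you detect this torsion through the quotient $G^N \to G^N/E$, while the paper's intended argument uses the explicit $\mathrm{End}(G)$-linear equations from Fact~\ref{subgroupSatisfiesAdditiveRelations} cutting out a one-dimensional subgroup containing $E$; these play exactly the same role.
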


\begin{corollary}\label{indexIsFinite}
    Let $N \geq 2$ and $C_1, C_2 \subseteq G^N(\mathbb{C})$ be irreducible closed algebraic curves. Assume that there is no algebraic subgroup $H \subseteq G^N(\mathbb{C})$ of dimension $g$ such that $C_1 \subseteq H$. Let $\mathcal{N} = \{n \in \mathbb{N} \ | \ [n]C_1 \subseteq C_2\}$. Then $\mathcal{N}$ is finite. 
\end{corollary}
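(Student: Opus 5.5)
Suppose, for contradiction, that $\mathcal{N}$ is infinite. The approach is to produce a non-torsion point $y$ of $G^N(\mathbb{C})$ and infinitely many $n$ for which $y$ is an $n$-th multiple of a point of $C_1$, so that Corollary \ref{torsionCorollary}(2) applies to $C_1$.

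First I would check that $C_1$ contains a non-torsion point. $C_1$ is infinite, but $G^N(\mathbb{C})$ has only countably many torsion points, so such a point $x_0\in C_1$ exists.

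Next I would look at the multiples $[n]C_1$. For $n\in\mathcal{N}$, the set $[n]C_1$ is the image of the irreducible curve $C_1$ under the finite morphism $[n]$. Hence it is an irreducible closed curve. Since $[n]C_1\subseteq C_2$ and $C_2$ is an irreducible curve, we get $[n]C_1=C_2$ for every $n\in\mathcal{N}$.

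Now fix some $n_0\in\mathcal{N}$ and set $y:=n_0x_0\in C_2$. This point is non-torsion, because $x_0$ is non-torsion. For each $n\in\mathcal{N}$, the equality $[n]C_1=C_2$ gives some $x_n\in C_1$ with $n x_n=y$. Since $\mathcal{N}$ is infinite, condition (2) of Corollary \ref{torsionCorollary} holds for the curve $C_1$ and the point $y$.

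Corollary \ref{torsionCorollary} then gives $\dim G=1$ and that $C_1$ lies in an algebraic subgroup of dimension $1=g$. This contradicts the hypothesis that $C_1$ lies in no algebraic subgroup of dimension $g$, so $\mathcal{N}$ is finite.

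The only mildly delicate step is the claim $[n]C_1=C_2$. It uses two facts: the multiplication map is proper, so images of closed sets are closed; and it has finite fibres, so the image of $C_1$ is again one-dimensional. Everything else is direct bookkeeping with Corollary \ref{torsionCorollary}.
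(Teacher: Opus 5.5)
Your argument is correct and follows the paper's route. The paper omits this proof, saying only that it follows from Corollary \ref{torsionCorollary} as in Boxall's toric case, and your deduction via condition (2), using $[n]C_1 = C_2$ for $n \in \mathcal{N}$, is exactly such a derivation. An equally short variant applies condition (1) to $C_2$ and the non-torsion point $x_0$, which needs only $[n]C_1 \subseteq C_2$, and then pulls the resulting $g$-dimensional subgroup back along $[n]$.
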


%

\begin{proposition}\label{propositionIForgot}
    Let $C_1, C_2 \subseteq G^N(\mathbb{C})$ be irreducible closed algebraic curves with $N \geq 3$. Assume that $C_1$ is not contained in a proper algebraic subgroup of $G^N(\mathbb{C})$. Let $\mathcal{N} = \{n \in \mathbb{N} \ | \ [n]C_1 \subseteq C_2\}$. If $C_1$ or $C_2$ is contained in a translate of an algebraic subgroup of $G^N(\mathbb{C})$ of dimension $g(N - 2)$, then $\bigcup_{n \in \mathbb{N}\setminus \mathcal{N}}\{x \in C_1 \ | \ nx \in C_2\}$ is finite.
\end{proposition}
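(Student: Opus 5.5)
The plan is to exploit the two-codimension structure through a homomorphism to $G^2$. Then Corollary \ref{torsionCorollary} controls the possible $n$ and the possible images in $G^2$.

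\textbf{Setup.} Suppose $C_i \subseteq c + H$ with $H$ an algebraic subgroup of dimension $g(N-2)$, i.e.\ of codimension $2g$. By Fact \ref{subgroupSatisfiesAdditiveRelations} we may enlarge $H$ to $H'$, cut out by two left $\mathrm{End}_0(G)$-independent rows $(\lambda_{1,j})_j$ and $(\lambda_{2,j})_j$. These rows define a surjective homomorphism $\varphi\colon G^N \to G^2$ whose kernel $H'$ has dimension $g(N-2)$, and $\varphi(C_i)$ is a single point.

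Two preliminary observations will be used repeatedly.
\begin{enumerate}
\item If $\varphi(C_1)$ is a torsion point $q$ with $mq=0$, then $C_1 \subseteq [m]^{-1}\ker\varphi$, which is a proper algebraic subgroup. This contradicts the hypothesis on $C_1$.
\item If $\varphi(C_1)$ is a curve $D$ contained in an algebraic subgroup $E$ of dimension $1$, then $C_1 \subseteq \varphi^{-1}(E)$. This subgroup has dimension $g(N-2)+1 < gN$, again a contradiction.
\end{enumerate}
Finally, for any fixed $n \notin \mathcal{N}$, the set $\{x \in C_1 \mid nx \in C_2\}$ is a proper Zariski closed subset of the irreducible curve $C_1$, hence finite. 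So it always suffices to bound either the relevant $n$ or the relevant $x$.

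\textbf{Case $C_2 \subseteq c+H$,} so $\varphi(C_2)=\{p\}$. If $x \in C_1$ and $nx \in C_2$, then $n\varphi(x)=p$.
\begin{itemize}
\item Suppose $\varphi(C_1)=\{q\}$ is a point. By observation (1), $q$ is non-torsion, so $nq=p$ holds for at most one $n$, and we are done.
\item Otherwise $D=\varphi(C_1)$ is a curve.
  \begin{itemize}
  \item If $p$ is non-torsion, then infinitely many admissible $n$ would give infinitely many $n$ with some $y\in D$ and $ny=p$. Corollary \ref{torsionCorollary}(2) would then force $D$ into a $1$-dimensional subgroup, contradicting observation (2). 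So only finitely many $n$ occur.
  \item If $p$ is torsion, every relevant $y=\varphi(x)$ is torsion. By Corollary \ref{torsionCorollary}(3) and observation (2), $D$ has only finitely many torsion points. The fibres of $\varphi|_{C_1}$ over points of the curve $D$ are finite, so the union of the relevant $x$ over all $n$ is finite directly.
  \end{itemize}
\end{itemize}

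\textbf{Case $C_1 \subseteq c+H$,} so $\varphi(C_1)=\{q\}$ with $q$ non-torsion by observation (1). Now $nx \in C_2$ forces $nq \in \varphi(C_2)$.
\begin{itemize}
\item If $\varphi(C_2)$ is a point, at most one $n$ occurs.
\item If $D_2=\varphi(C_2)$ is a curve, Corollary \ref{torsionCorollary}(1) gives finitely many $n$. The only exception is that $g=1$ and $D_2$ lies in a $1$-dimensional subgroup $E\subseteq G^2$.
\end{itemize}
This exceptional subcase is the step I expect to be the main obstacle, since it is not immediately contradictory. My plan there is to compose $\varphi$ with a surjection $\psi\colon G^2\to G$ killing $E^0$, given by one $\mathrm{End}(G)$-row, after multiplying by an integer to absorb the finite part. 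Then $\psi\varphi(C_2)$ is a point $p'$ and $\psi\varphi(C_1)$ is a point $q'$. The relation $nq'=p'$ then allows at most one $n$ unless $q'$ is torsion. If $q'$ is torsion, $C_1$ lies in a torsion coset of $\ker(\psi\varphi)$, a subgroup of dimension $g(N-1)$. As in observation (1), this puts $C_1$ inside a proper algebraic subgroup, a contradiction.

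In every case only finitely many $n\notin\mathcal{N}$, or only finitely many $x$, contribute. Combined with the finiteness of each individual set, this gives the claim.
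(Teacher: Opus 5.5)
Your argument is correct and follows the route the paper intends: the paper omits this proof, saying it is essentially Boxall's toric Proposition 2.6 and a consequence of Corollary \ref{torsionCorollary}. Your reduction via Fact \ref{subgroupSatisfiesAdditiveRelations} to a surjection $\varphi\colon G^N\to G^2$ that collapses one curve to a point, followed by the three parts of Corollary \ref{torsionCorollary}, is that argument transplanted. The exceptional subcase you flag does not need the auxiliary map $\psi$: a single admissible $n_0$ gives $n_0q\in D_2\subseteq E$, so $C_1\subseteq\varphi^{-1}([n_0]^{-1}E)$, which is a proper algebraic subgroup of dimension $g(N-2)+1$, exactly as in your observation (2).
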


For reasons that will become apparent in Section \ref{finitenessResultChapter}, we will need the following lemma in order to construct sequences to work with points in ultrapowers. 


\begin{lemma}\label{infiniteIndexes}
If $C_1, C_2 \subseteq G^N(\mathbb{C})$ are irreducible closed algebraic curves, with $N \geq 3$, and $\bigcup_{n \in \mathbb{N}\setminus\mathcal{N}} \{x \in C_1 \ | \ nx \in C_2\}$ infinite, where $\mathcal{N} = \{n \in \mathbb{N} \ | \ [n]C_1 \subseteq C_2\}$, then there exist infinitely many $n \in \mathbb{N}\setminus\mathcal{N}$ with the property that there exists $x \in C_1$ with $nx \in C_2$. 
\end{lemma}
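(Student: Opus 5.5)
We argue by contradiction. Suppose only finitely many $n \in \mathbb{N}\setminus\mathcal{N}$ admit some $x \in C_1$ with $nx \in C_2$; call them $n_1, \ldots, n_k$. Then
\begin{align*}
\bigcup_{n \in \mathbb{N}\setminus\mathcal{N}}\{x \in C_1 \ | \ nx \in C_2\} = \bigcup_{i=1}^{k}\{x \in C_1 \ | \ n_i x \in C_2\},
\end{align*}
because every other $n \in \mathbb{N}\setminus\mathcal{N}$ contributes the empty set. The left-hand side is infinite by hypothesis, so some single set $S := \{x \in C_1 \ | \ n_i x \in C_2\}$ is infinite.

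Next we show that $S$ infinite forces $n_i \in \mathcal{N}$, which contradicts $n_i \in \mathbb{N}\setminus\mathcal{N}$. The set $S = C_1 \cap [n_i]^{-1}(C_2)$ is Zariski closed in $C_1$, since multiplication by $n_i$ is a morphism of $G^N$ and $C_2$ is closed. Since $C_1$ is an irreducible curve, its only proper closed subsets are finite. As $S$ is infinite, $S = C_1$, so $n_i C_1 \subseteq C_2$ and hence $n_i \in \mathcal{N}$.

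This argument does not use $N \geq 3$ or any of the earlier corollaries; only the irreducibility of $C_1$ and the continuity of $[n_i]$ matter. The one step needing care is the observation that $\{x \in C_1 \ | \ nx \in C_2\}$ is closed in $C_1$. That holds because it is the preimage of the closed set $C_2$ under the restriction to $C_1$ of the morphism $[n]$.
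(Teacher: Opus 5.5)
Your proof is correct and follows essentially the same route as the paper: you reduce to a single $n \in \mathbb{N}\setminus\mathcal{N}$ with infinitely many $x \in C_1$ satisfying $nx \in C_2$. You then observe that $C_1 \cap [n]^{-1}(C_2)$ is an infinite closed subset of the irreducible curve $C_1$, hence all of $C_1$, which forces $n \in \mathcal{N}$ (and your remark that $N \geq 3$ plays no role is accurate).
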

\begin{proof}
    Assume that there existed only finitely many $n \in \mathbb{N}\setminus\mathcal{N}$ with the property that there exists $x \in C_1$ with $nx \in C_2$. Then there exists $n \in \mathbb{N}\setminus\mathcal{N}$ such that there are infinitely many $x \in C_1$ with $nx \in C_2$. As multiplication by $n$ is an endomorphism of $G^N(\mathbb{C})$, which we will denote by $\lambda_n: G^N(\mathbb{C}) \rightarrow G^N(\mathbb{C})$, it follows that $\lambda^{-1}_n(C_2)$ is a subvariety of $G^N(\mathbb{C})$. Hence, $\lambda^{-1}_n(C_2) \cap C_1$ is a subvariety of $C_1$. As there are infinitely many $x \in C_1$ such that $nx \in C_2$, we have that $\lambda^{-1}_n(C_2) \cap C_1$ must be a curve. As $C_1$ is irreducible, this implies that $\lambda^{-1}_n(C_2) \cap C_1 = C_1$ which contradicts that $n \not\in \mathcal{N}$.
%
%
\end{proof}

This last result of the section will allow us to reduce Theorem \ref{OGEllipticConjecture} to a simpler case where we need only consider curves in $G^3(\mathbb{C})$. The proof is essentially the same to that of Boxall in \cite{boxall2024points} and is given for the readers convenience. 

\begin{theorem}\label{thisIsIt}
    If Theorem \ref{OGTheoremReduced} is true, then Theorem $\ref{OGEllipticConjecture}$ holds.
\end{theorem}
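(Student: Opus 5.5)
We follow Boxall's reduction in \cite[Section 2]{boxall2024points}. Fix $N \geq 3$ and curves $C_1, C_2$ as in Theorem \ref{OGEllipticConjecture}, and suppose for contradiction that $S := \bigcup_{n \in \mathbb{N}\setminus\mathcal{N}}\{x \in C_1 \mid nx \in C_2\}$ is infinite. By Lemma \ref{infiniteIndexes} there are infinitely many $n \notin \mathcal{N}$ for which some $x \in C_1$ satisfies $nx \in C_2$. The plan is to project everything to $G^3$ by a suitable homomorphism, check that the hypotheses of Theorem \ref{OGTheoremReduced} hold for the projected curves, and obtain a contradiction from infinitely many points on the image of $C_1$.

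\textbf{Reducing to $C_1$ in no proper subgroup.} First we reduce to the case where $C_1$ lies in no proper algebraic subgroup. Let $H$ be the smallest algebraic subgroup containing $C_1$. Then the connected component $H^0$ is an abelian subvariety of $G^N$, so $H^0 \cong G^M$ for some $M$. We may replace $G^N$ by $H^0$ after translating the finitely many components, and intersect $C_2$ with $H$. By hypothesis, $\dim H \geq 2g$, so $M \geq 2$.

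If $M = 2$, then $C_1 \cup C_2$ lies in a subgroup of dimension $2g$ whenever $C_2 \subseteq H$. If $C_2 \not\subseteq H$, then $C_2 \cap H$ is finite. In that case we have infinitely many $x \in C_1$ with $nx$ in a finite set, and Corollary \ref{torsionCorollary}(2) rules this out, using also the case distinction on whether the relevant $y$ is torsion, together with (3) and the $\dim G = 1$ clause, which is excluded by the hypothesis on $H_1$. So we may assume $M \geq 3$. We also keep $C_2 \subseteq H^0$, since otherwise the same finiteness argument applies. The transcendence hypothesis survives because the isomorphism $H^0 \cong G^M$ is defined over $\mathbb{Q}^{\mathrm{alg}}$.

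\textbf{Projecting to $G^3$.} Now choose a surjective homomorphism $\pi: G^M \to G^3$ defined by an $\mathrm{End}(G)$-matrix. There are countably many such $\pi$, all defined over $\mathbb{Q}^{\mathrm{alg}}$, and we want $\pi$ to satisfy the following:
\begin{enumerate}
\item $\pi(C_1)$ and $\pi(C_2)$ are curves;
\item $\pi(C_1)$ lies in no proper algebraic subgroup of $G^3$;
\item $\pi$ restricted to $C_1$ is generically finite;
\item $\pi(C_1)$ or $\pi(C_2)$ is not defined over $\mathbb{Q}^{\mathrm{alg}}$;
\item for the infinitely many relevant $n$, we have $[n]\pi(C_1) \not\subseteq \pi(C_2)$.
\end{enumerate}
Since $\pi$ commutes with $[n]$, every point of $S$ maps to a point of $\pi(C_1)$ whose $n$-th multiple lies in $\pi(C_2)$. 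Theorem \ref{OGTheoremReduced} then says the images form a finite set. By generic finiteness, $S$ itself is finite, which is the desired contradiction. We only need to control $\pi(C_1)$ modulo finitely many $n$ in the reduced $\mathcal{N}$, and Corollary \ref{indexIsFinite} guarantees that this set is finite.

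\textbf{The main obstacle.} The main obstacle is choosing $\pi$ to satisfy all of these at once, and especially condition (5), that $n \notin \mathcal{N}$ is preserved. For this we use Proposition \ref{propositionIForgot}. If $C_1$ or $C_2$ lies in a translate of a subgroup of dimension $g(M-2)$, we are already done. Otherwise, for a generic $\pi$ (one avoiding countably many proper conditions), the kernel $\ker\pi$ has dimension $g(M-3)$. In that case, $[n]\pi(C_1) \subseteq \pi(C_2)$ would force $[n]C_1 \subseteq C_2 + \ker\pi$. Using that $C_2$ lies in no translate of a subgroup of dimension $g(M-2)$, we then argue that $C_2 + \ker\pi$ meets the $n$-th multiples correctly, so that $n \in \mathcal{N}$, contradicting our choice of $n$.

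Conditions (2)–(4) are open conditions on $\pi$ that fail only on a countable union of proper subsets. For (4), take a non-$\mathbb{Q}^{\mathrm{alg}}$ coordinate and a generic $\pi$. This makes such a choice of $\pi$ possible.
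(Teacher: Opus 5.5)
Your skeleton matches the paper's: dispose of the case where $C_1$ lies in a $2g$-dimensional subgroup via Corollary~\ref{torsionCorollary}, project to $G^3$, apply Theorem~\ref{OGTheoremReduced}, and pull back through the finite fibres of $\pi|_{C_1}$. The gap is the one step with real content, the existence of a $\pi$ satisfying your condition (4) together with (1), which you assert rather than prove. Saying that (2)--(4) ``fail only on a countable union of proper subsets'' is not an argument. The set of surjective $\mathrm{End}(G)$-matrices is itself countable, so it can be exhausted by countably many proper subsets. You would need each bad set to lie in one of finitely many proper Zariski-closed (e.g.\ linear) subsets of the matrix space. That can be checked for (1): $\pi(C)$ is a point iff $C-C\subseteq\ker\pi$, a proper linear condition on the rows. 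Condition (2) is in fact automatic for every surjective $\pi$ once $C_1$ lies in no proper subgroup of $G^M$, since the preimage of a proper subgroup is proper. For (4), however, you offer only ``take a non-$\mathbb{Q}^{\mathrm{alg}}$ coordinate and a generic $\pi$'', and nothing explains why the set of $\pi$ with both $\pi(C_1)$ and $\pi(C_2)$ defined over $\mathbb{Q}^{\mathrm{alg}}$ should be small. This is exactly what the paper's Claim in Case~2 supplies. The paper also treats separately the possibility that an image is a point (Subcase~2.1), rather than assuming (1) away. In your setting one way to close the gap is with coordinate projections. Let $C$ be the curve not defined over $\mathbb{Q}^{\mathrm{alg}}$ and choose $i$ with $\pi_i(C)$ a curve. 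Then $C$ is an irreducible component of $\bigcap_{j\neq i}\pi_{ij}^{-1}(\pi_{ij}(C))$, because the fibres of that intersection over $\pi_i(C)$ are finite. Hence some $\pi_{ij}(C)$, and therefore $\pi_{ijk}(C)$ for every $k$, is not defined over $\mathbb{Q}^{\mathrm{alg}}$. Finally choose $k$ so that the other curve has non-constant image. Some such argument has to be written out.

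Conversely, condition (5), which you single out as the main obstacle, is unnecessary, and your argument for it does not work. The inclusion $[n]C_1\subseteq C_2+\ker\pi$ does not imply $[n]C_1\subseteq C_2$, since $C_2+\ker\pi$ has dimension $1+g(M-3)$ and contains many curves. The right observation is the one you make in passing, and it is what the paper does. The set $\mathcal{N}'$ for the projected curves is finite by Corollary~\ref{indexIsFinite}. For each $n\in\mathcal{N}'\setminus\mathcal{N}$, the set $\{x\in C_1 \mid nx\in C_2\}$ is a proper closed subset of the irreducible curve $C_1$, hence finite. Your preliminary reduction also needs care in two places. First, $H^0$ is in general only isogenous, not isomorphic, to $G^M$ (e.g.\ CM elliptic curves with nontrivial class group), so you should lift through an isogeny $G^M\to H^0$. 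Second, if $C_1$ lies in a non-identity component $h+H^0$ you cannot just translate, because translation does not commute with $[n]$. Instead take $h$ torsion and split $n$ into residue classes modulo its order.
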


\begin{proof}
    Let $N \geq 3$ and let $C_1, C_2 \subseteq G^N(\mathbb{C})$ be irreducible closed algebraic curves such that $C_1$ is not contained in a $g$-dimensional algebraic subgroup of $G^N(\mathbb{C})$ and $C_1 \cup C_2$ is not contained in a $2g$-dimensional algebraic subgroup of $G^N(\mathbb{C})$. We will argue by contradiction by considering several cases. Assume that $\bigcup_{n \in \mathbb{N}\setminus\mathcal{N}} \{x \in C_1 \ | \ nx \in C_2\}$ is infinite.

    \underline{Case 1}: Suppose that there is a $2g$-dimensional algebraic subgroup $H \subseteq G^N(\mathbb{C})$ such that $C_1 \subseteq H$. We break this case further into two subcases.

    \underline{Subcase 1.1}: Assume that there are infinitely many $y \in C_2$ such that there exist $x \in C_1$ and $n \in \mathbb{N}\setminus\mathcal{N}$ such that $nx = y$. Denote $S$ as the set of all such $y \in C_2$. Since $C_1 \subseteq H$ and $H$ is an algebraic subgroup of $G^N(\mathbb{C})$ it follows that $S \subseteq H$. We have that $S \subseteq C_2 \cap H$ so that $C_2 \cap H$ is infinite. We must have that $C_2 \cap H$ is a subvariety of $C_2$ of dimension $1$. Since $C_2$ is irreducible and also of dimension $1$, we deduce that $C_2 \cap H = C_2$ so that $C_2 \subseteq H$. This contradicts our assumption that $C_1 \cup C_2$ cannot be contained in a $2g$-dimensional algebraic subgroup of $G^N(\mathbb{C})$. 

    \underline{Subcase 1.2}: Now, assume the contrary so that there are only finitely many $y \in C_2$ such that there exists $x \in C_1$ and $n \in \mathbb{N}\setminus\mathcal{N}$ such that $nx = y$. Then there exists a $y \in C_2$ such that, for infinitely many $x \in C_1$, there is some $n \in \mathbb{N}\setminus\mathcal{N}$ such that $nx = y$. If $y$ is a torsion point, then $C_1$ would have infinitely many torsion points which, by Corollary \ref{torsionCorollary}, implies that $g = 1$ and $C_1$ is contained in a $g$-dimensional algebraic subgroup. If $y$ were non-torsion, then again, by Corollary \ref{torsionCorollary}, $g = 1$ and $C_1$ would be contained in a $g$-dimensional algebraic subgroup. In either case, this is a contradiction to our assumptions on $C_1$.

    \underline{Case 2}: Assume that $C_1$ is not contained in a $2g$-dimensional algebraic subgroup. 
    
    \vspace{10pt}
    \underline{Claim}: Since at least one of $C_1$ or $C_2$ is not defined over $\mathbb{Q}^\mathrm{alg}$, there exists a projection $\pi: G^N(\mathbb{C}) \rightarrow G^3(\mathbb{C})$ such that $\pi(C_1)$ is not contained in a proper algebraic subgroup of $G^3(\mathbb{C})$ and that at least one of $\pi(C_1)^\mathrm{Zar}$ or  $\pi(C_2)^\mathrm{Zar}$ is not defined over $\mathbb{Q}^\mathrm{alg}$.

    \vspace{10pt}

    Indeed, suppose that no such projection existed. 
    Let $\pi': G^N(\mathbb{C}) \rightarrow G^k(\mathbb{C})$ be a projection such that $\pi'(C_1)$ is not contained in a proper algebraic subgroup of $G^k(\mathbb{C})$. We may assume that $k$ is maximal so that $k \geq 3$ as $C_1$ is not contained in a $2g$-dimensional algebraic subgroup. 
    We now claim that $\bigcup_{n \in \mathbb{N}\setminus\mathcal{N}}\{y \in C_2 \ | \ \text{ there exists } x \in C_1 \text{ such that } nx = y\}$ is infinite. To prove this, assume the contrary so that there exists $y \in C_2$ such that there are infinitely many $x \in C_1$ such that there exists an $n \in \mathbb{N}\setminus\mathcal{N}$ where $nx = y$. If $y$ is not a torsion point, then, by Corollary \ref{torsionCorollary}, $g = 1$ and $C_1$ is contained in an algebraic subgroup of $G^N(\mathbb{C})$ of dimension $g$. If $y$ is a torsion point, then by Corollary \ref{torsionCorollary}, $g = 1$ and $C_1$ is again contained in an algebraic subgroup of $G^N(\mathbb{C})$ of dimension $g$. In both cases, we reach a contradiction. So, $\bigcup_{n \in \mathbb{N}\setminus\mathcal{N}}\{y \in C_2 \ | \ \text{ there exists } x \in C_1 \text{ such that } nx = y\}$ is infinite. We now claim that $y$ has additive dimension zero over $\pi'(y)$ for all $y \in C_2$. If $k = N$, then the claim is immediate. Assume that $k < N$ and assume, for simplicity, that $\pi'(y_1, \ldots, y_N) = (y_1, \ldots, y_k)$ for any $(y_1, \ldots, y_N) \in G^N$. 
    Let $j > k$ and let $\pi'': G^N(\mathbb{C}) \rightarrow G^{k+1}(\mathbb{C})$ be the projection that, which we also may assume for simplicity, sends $(y_1, \ldots, y_N)$ to $(y_1, \ldots, y_k, y_j)$. By the maximality of $k$, $\pi''(C_1)$ is contained in a proper algebraic subgroup $H''$ of $G^{k + 1}(\mathbb{C})$. It follows that there is a proper algebraic subgroup $H$ of $G^N(\mathbb{C})$ such that $\pi''(H) = H''$. Let $x \in C_1$ such that there exists an $n \in \mathbb{N}\setminus\mathcal{N}$ where $nx \in C_2$. As $H$ is an algebraic group and $x \in H$, it follows that, for any $n \in \mathbb{N}$, $nx \in H$. As $\bigcup_{n \in \mathbb{N}\setminus\mathcal{N}}\{y \in C_2 \ | \ \text{ there exists } x \in C_1 \text{ such that } nx = y\}$ is infinite, it follows that there are infinitely many points in $H \cap C_2$. As in the proof of Lemma \ref{infiniteIndexes}, since $C_2$ is of dimension 1 and is irreducible, we deduce that $C_2 \subseteq H$ so that $\pi''(C_2) \subseteq H''$. Hence, the additive dimension of $(y_1, \ldots, y_k, y_j)$ over $(y_1, \ldots, y_k)$ is zero. As $j$ was arbitrary, we deduce that $(y_1, \ldots, y_N)$ has additive dimension zero over $(y_1, \ldots, y_k)$, thus proving our second claim. Since, by assumption, $\pi'(C_1)$ and $\pi'(C_2)$ are defined over $\mathbb{Q}^\mathrm{alg}$, 
    it follows that $C_1$ and $C_2$ are defined over $\mathbb{Q}^\mathrm{alg}$. This is a contradiction.  \qed
    \vspace{10pt}

    Now, let $\pi: G^N(\mathbb{C}) \rightarrow G^3(\mathbb{C})$ be a projection such that $\pi(C_1)$ is not contained in a proper algebraic subgroup of $G^3(\mathbb{C})$ and that, letting $C_1' := \pi(C_1)^\mathrm{Zar}$ and $C_2' := \pi(C_2)^\mathrm{Zar}$, at least one of $C_1'$ or $C_2'$ is not defined over $\mathbb{Q}^\mathrm{alg}$. 
    Since $C_1$ and $C_2$ are irreducible, $C_1'$ and $C_2'$ are both irreducible. We have that $C_1'$ is a curve. Indeed, suppose that this were not true. We have that $\bigcup_{n \in \mathbb{N}\setminus\mathcal{N}} \{x \in C_1' \ | \ nx \in C_2'\}$ is non-empty since $\bigcup_{n \in \mathbb{N}\setminus\mathcal{N}} \{x \in C_1 \ | \ nx \in C_2\}$ is non-empty. Then $C_1'$ would be a single point, say $x \in C_1'$, as it is irreducible and, by Lemma \ref{infiniteIndexes}, $nx \in C_2'$ for infinitely many $n \in \mathbb{N}\setminus\mathcal{N}$. If $x$ is not a torsion point, then Corollary \ref{torsionCorollary} gives that $g = 1$ and $C_2'$ is contained in an algebraic subgroup of dimension $g$. This implies that $C_1'$ is also contained in a proper algebraic subgroup which is a contradiction. If $x$ were a torsion point, then it would be contained in the algebraic subgroup defined by $\sum_{i = 1}^N\lambda_m(x_i) = 0$ where $\lambda_m$ is the endomorphism of $G$ corresponding to multiplying by some $m \in \mathbb{N}$. This is also a contradiction. We now consider various subcases as before. Let $\mathcal{N}' = \{n \in \mathbb{N} \ | \ [n]C_1' \subseteq C_2'\}$.

    \underline{Subcase 2.1}: Assume that $C_2'$ is finite. Since $C_2'$ is irreducible, it consists of a single point, say $y \in C_2'$. We consider two subcases.

    \underline{Subcase 2.1.1}: Assume that $\bigcup_{n \in \mathbb{N}\setminus\mathcal{N}'}\{x \in C_1' \ | \ nx \in C_2'\}$ is finite. If $\bigcup_{n \in \mathbb{N}\setminus\mathcal{N}'}\{x \in C_1 \ | \ nx \in C_2\}$ is infinite, then there is $x \in C_1'$ such that $\pi^{-1}(\{x\}) \cap C_1$ is infinite. Thus, $\pi^{-1}(\{x\}) \cap C_1$ is a curve in $C_1$. It follows from previous arguments that $\pi^{-1}(\{x\}) \cap C_1 = C_1$. This contradicts that $C_1'$ is infinite. Thus, $\bigcup_{n \in \mathbb{N}\setminus\mathcal{N}'}\{x \in C_1 \ | \ nx \in C_2\}$ is finite. By Corollary \ref{indexIsFinite}, $\mathcal{N}'$ is finite. 
    We conclude that $\bigcup_{n \in \mathbb{N}\setminus\mathcal{N}}\{x \in C_1 \ | \ nx \in C_2\}$ is finite.

    \underline{Subcase 2.1.2}: Assume that $\bigcup_{n \in \mathbb{N}\setminus\mathcal{N}'}\{x \in C_1' \ | \ nx \in C_2'\}$ is infinite. If $y$ is a non-torsion point then, by Corollary \ref{torsionCorollary}, $g = 1$ and $C_1'$ is contained in a $g$-dimensional algebraic subgroup which implies that $C_1$ is also contained in an algebraic subgroup of codimension $2g$. If $y$ is a torsion point, then $C_1'$ contains infinitely many torsion points so that, by Corollary \ref{torsionCorollary}, $g = 1$ and $C_1'$ is contained a $g$-dimensional algebraic subgroup. This, again, implies that $C_1$ is contained an algebraic subgroup of codimension $2g$. In both cases, using Proposition \ref{propositionIForgot}, this is a contradiction to our assumptions.

    \underline{Subcase 2.2}: Assume that $C_2'$ is infinite. Then $C_1'$ and $C_2'$ are both curves. By assumption, $\bigcup_{n \in \mathbb{N}\setminus\mathcal{N}'}\{x \in C_1' \ | \ nx \in C_2'\}$ is finite. We deduce that $\bigcup_{n \in \mathbb{N}\setminus\mathcal{N}}\{x \in C_1 \ | \ nx \in C_2\}$ is finite by similar arguments to that of subcase 2.1.1. 
\end{proof}

\section{Certain related pregeometries and a Bays-Kirby-Wilkie-type result}\label{BKWSection}

\subsection{$\Gamma$-fields}\label{gammaFields} 
We will henceforth assume that all fields are of characteristic $0$. Recall that $G$ is a simple abelian variety defined over a number field $\mathcal{K}_0$ where we denote $g := \mathrm{dim} G$. We will assume that $\mathrm{End}(G)$ and $\mathrm{End}_0(G)$ are defined over $\mathcal{K}_0$. 
Set $F$ as an algebraically closed field that is a field extension of $\mathcal{K}_0$ and consider $G(F)$ as an $\mathrm{End}(G)$-module equipped with $G(F)$'s group operation as addition and $\mathrm{End}(G)$ acting on $G(F)$ as scalar multiplication. Since each endomorphism $\lambda \in \mathrm{End}(G)$ lifts to a unique $F$-linear endomorphism   $\alpha_\lambda$ of $\mathbb{G}_{\rm{a}}(F)$, we may also consider $\mathbb{G}_{\rm{a}}(F)$ as an $\mathrm{End}(G)$-module equipped with the usual addition and scalar multiplication given by the action of $\alpha_\lambda$ for each $\alpha \in \mathrm{End}(G)$. We will identify $TG$ as $\mathbb{G}_{\rm{a}}^g \times G$ considered with the natural $\mathrm{End}(G)$-module structure. Note that, given an algebraic variety $V$ and some $A \subseteq V(F)$, we call the smallest field $K \subseteq F$, such that $A \subseteq V(K)$, the field generated by coordinates in $A$.

\begin{definition}\label{gammaFieldDefinition}
    A \emph{$\Gamma$-field} (with respect to the $\mathrm{End}(G)$-module $TG(F)$)
    is a pair $(K, K_\Gamma)$ where $K_\Gamma$ is a divisible $\mathrm{End}(G)$-submodule of $TG(F)$ and $K$ is a field extension of $\mathcal{K}_0$ such that 
    $K$ contains the field generated by 
    the coordinates of $K_\Gamma$. We will usually write $(K, K_\Gamma)$ simply as $K$ and write $\pi_1: TG(K) \rightarrow \mathbb{G}_{\rm{a}}^g(K)$ and $\pi_2: TG(K) \rightarrow G(K)$ as the natural projections.
\end{definition}
We will, henceforth, consider $F$ as a $\Gamma$-field with respect to $TG(F)$ equipped with some divisible $\mathrm{End}(G)$-submodule of $TG(F)$. 
\begin{definition}
    An \emph{extension} of a $\Gamma$-field $K$ is a $\Gamma$-field $L$ such that 
    $K_\Gamma \subseteq L_\Gamma$, $K$ is a subfield of $L$ and $K_\Gamma$ contains all torsion of $L_\Gamma$. 
\end{definition}

Following the arguments of \cite[Lemma 3.14]{Bays_2018}, this next definition is well-defined.
\begin{definition}
            Let $L$ be a $\Gamma$-field and let $\{K_i \ | \ i \in I\}$ be a family of $\Gamma$-subfields of $L$. 
        We define $\bigwedge_{i \in I} K_i$ to be the $\Gamma$-field with underlying field
        $\bigcap_{i \in I} K_i$ and $\mathrm{End}(G)$-module $\bigcap_{i \in I} {K_{i}}_\Gamma$.
\end{definition}
This permits us to give the following definition. 

\begin{definition}
    Let $L$ be a $\Gamma$-field, let $X \subseteq L$ and let $Y \subseteq L_\Gamma$. Let $\mathcal{B}_{X, Y}$ be the set of all {$\Gamma$-subfields} $K$ of $L$ 
    such that $X \subseteq K$ and $Y \subseteq K_\Gamma$. We say that $\bigwedge \mathcal{B}_{X, Y}$ is the $\Gamma$-subfield \emph{generated by $X$ and $Y$} and denote it by $\langle X, Y\rangle_{L}$ where we usually omit the subscript $L$ when it is understood. 
\end{definition}

%

Deviating slightly from Bays and Kirby's approach in \cite{Bays_2018}, we will mainly consider finitely generated $\Gamma$-field extensions generated only using elements from $\pi_2(F_\Gamma)$ which leads us to the following definition.


\begin{definition}\label{OGfinitelyGeneratedExtension}
    Let $K$ be a $\Gamma$-field and let $L$ be a $\Gamma$-field extension of $K$. We 
    say that $L$ is \emph{finitely generated over $K$} if there exist a finite set $X \subseteq L$ and a finite set $Y \subseteq L_\Gamma$ such that $L = \langle K \cup X \cup Z, K_\Gamma \cup Y \rangle_L$ where $Z$ is the field generated by the coordinates of $Y$. We say that $L$ is  \emph{exponentially finitely generated over $K$} if $X = \varnothing$.
\end{definition}

\subsection{$\Gamma$-closure}\label{gammaClosureSubsection}
We begin this section by introducing some useful finitary closure operators. Given a set $S$, recall that a map $\mathrm{cl}: \mathcal{P}(S) \rightarrow \mathcal{P}(S)$ is a \emph{finitary closure operator on $S$} if it is extensive ($A \subseteq \mathrm{cl}(A)$ for all $A \subseteq S$), monotone (for all $A, B \subseteq S$, $A \subseteq B \implies \mathrm{cl}(A) \subseteq \mathrm{cl}(B))$), idempotent ($\mathrm{cl}(\mathrm{cl}(A)) = \mathrm{cl}(A)$ for all $A \subseteq S$) and satisfies the finite character property (for all $A \subseteq S$, $a \in \mathrm{cl}(A) \implies a \in \mathrm{cl}(A_0)$ for some finite $A_0 \subseteq A$). Some examples of finitary closure operators are as follows.
    \begin{itemize}
        \item $\mathrm{span}^{V_D}: \mathcal{P}(V_D) \rightarrow \mathcal{P}(V_D)$, the span of a subset, where $V_D$ is a left $D$-vector space over a division algebra $D$.
        \item $\mathrm{acl}^K: \mathcal{P}(K) \rightarrow \mathcal{P}(K)$, the relative algebraic closure, where $K$ is a field.
        \item $\mathrm{dcl}^R: \mathcal{P}(K) \rightarrow \mathcal{P}(K)$, the definable closure with $\mathrm{dcl}^R(A)$ defined to be the union of all sets of singletons definable in $R$ with parameters in $A$, where $R$ is an o-minimal structure.
    \end{itemize}
When using these, we will often drop the subscripts when the context is clear. The above examples are  particularly special examples of finitary closure operators, known as \emph{pregeometries (or matroids)}, which satisfy the Steinitz-exchange property (for all $a, b \in S$ and $A \subseteq S$, $a \in \mathrm{cl}(A\cup \{b\}) \wedge a \not\in \mathrm{cl}(A)\implies b \in \mathrm{cl}(A\cup \{a\})$). Every pregeometry $\mathrm{cl}$ on a set $S$ induces a well-defined notion of independence and dimension. In particular, a subset $A \subseteq S$ is said to be $\mathrm{cl}$-independent over $B \subseteq S$ if, for every $x \in A$, we have $x \not\in \mathrm{cl}(B \cup (A\setminus\{x\}))$. The dimension function $\mathrm{dim}_\mathrm{cl}(A/B)$ for $A, B \subseteq S$ is then defined to be the cardinality of any maximal $\mathrm{cl}$-independent subset of $A$ over $B$ which is well-defined by the exchange property.
In this paper, we will primarily make use of transcendence degree $\mathrm{tr.deg} := \mathrm{dim}_\mathrm{acl}$ and linear dimension $\mathrm{ldim} := \mathrm{dim}_{\mathrm{span}}$.

Now, to capture the interaction between algebraic independence and $\Gamma$-field extensions, we reintroduce the predimension function due to Hrushovski (see \cite{HRUSHOVSKI}) following the presentation and methods of \cite[Sections 4.1 and 4.2]{Bays_2018} to define the finitary closure operator $\Gamma\mathrm{cl}$. 

Given $\Gamma$-fields $K$ and $L$ such that $L$ is a $\Gamma$-field extension of $K$, we can associate a natural vector space to this extension. Indeed, the quotient space $L_\Gamma/K_\Gamma$ is divisible and, by definition of $\Gamma$-field extensions, is torsion-free, which implies that this is a finite dimensional $\mathrm{End}_0(G)$-vector space, allowing the following. 
%
%
\begin{definition}
    Let $K$ be a $\Gamma$-field. For any $\Gamma$-field $L$ 
    that is finitely generated over $K$, let \begin{align*}
        \delta(L/K) := \mathrm{tr.deg}(L/K) - d \mathrm{ldim}_{\mathrm{End}_0(G)}(L_\Gamma/K_\Gamma).
    \end{align*} 
\end{definition}
As a convention, for any $\Gamma$-subfield $K \subseteq F$ and $(a_1, \ldots, a_n) \in F_\Gamma^N$ for some $n \in \mathbb{N}$, we denote $\delta(a_1, \ldots, a_n/K) := \delta(X/K)$ 
where $X = \langle K \cup X_{\{a_1, \ldots, a_n\}}, K_\Gamma \cup \{a_1, \ldots, a_n\}\rangle_F$ with $X_{\{a_1, \ldots, a_n\}}$ denoting the subfield of $F$ generated by $\{a_1, \ldots, a_n\}$.

\begin{definition}
    A $\Gamma$-subfield $A$ of $F$ is $\Gamma$-closed in $F$, written
$A \triangleleft_\mathrm{cl} F$, if, for any $A \subseteq B \subseteq F$ with $B$ exponentially finitely generated over $A$ and $\delta(B/A) \leq 0$, we have $B = A$ as $\Gamma$-fields. 
\end{definition}

We can now in a position to define our needed closure operator. 
\begin{definition}
\label{gammaClosureDefinition}
    Let $A \subseteq F$. The \emph{$\Gamma$-closure of $G$ in $F$} is defined to be the smallest $\Gamma$-closed $\Gamma$-subfield containing $G$, 
    \begin{align*}
        \Gamma \mathrm{cl}^F(A) := \bigwedge \{B \triangleleft_\mathrm{cl} F \ | \ A \subseteq B\}.
    \end{align*} This induces a map $\Gamma\mathrm{cl}^F: \mathcal{P}(F) \rightarrow \mathcal{P}(F)$.
\end{definition}

Using similar arguments to \cite[Proposition 4.14]{Bays_2018}, we obtain the following. 
\begin{lemma}
    $\Gamma\mathrm{cl}^F: \mathcal{P}(F) \rightarrow \mathcal{P}(F)$ is a finitary closure operator. 
\end{lemma}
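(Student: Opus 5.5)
We check the four axioms in turn, following \cite[Proposition 4.14]{Bays_2018}. The main preliminary point is that $\Gamma\mathrm{cl}^F(A)$ is itself $\Gamma$-closed, so that it is the smallest $\Gamma$-closed $\Gamma$-subfield containing $A$. Extensivity and monotonicity are then immediate from the definition as a meet: every $B$ in the meet for $A'$ with $A\subseteq A'$ also contains $A$.

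\emph{$\Gamma$-closedness is preserved by intersections.} Let $\{B_i\}$ be $\Gamma$-closed and $B=\bigwedge_i B_i$. Suppose $B\subseteq C\subseteq F$ with $C$ exponentially finitely generated over $B$, say by a finite $Y\subseteq C_\Gamma$, and with $\delta(C/B)\le 0$. We must show $C=B$.

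For each $i$ let $C_i:=\langle B_i\cup Z, {B_i}_\Gamma\cup Y\rangle$, where $Z$ is the field generated by the coordinates of $Y$. We then aim for the submodularity inequality
\begin{align*}
\delta(C_i/B_i)\le \delta(C/B).
\end{align*}
Transcendence degree can only drop when we pass to the larger base, since $\mathrm{td}(Z/B_i)\le\mathrm{td}(Z/B)$. On the module side we need
\begin{align*}
\mathrm{ldim}(\langle {B_i}_\Gamma\cup Y\rangle/{B_i}_\Gamma)=\mathrm{ldim}(\langle B_\Gamma\cup Y\rangle/B_\Gamma)
\end{align*}
after the appropriate adjustment. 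The equality can fail, so this is exactly the delicate point. The standard remedy is to pass to a minimal counterexample. Choose $C$ with $\delta(C/B)\le 0$ and, among these, with $\mathrm{ldim}(C_\Gamma/B_\Gamma)$ minimal. Then every intermediate $B\subsetneq C'\subseteq C$ has $\delta(C'/B)>\delta(C/B)$. Write the linear dimension drop over $B_i$ as the dimension of $\langle Y\rangle\cap {B_i}_\Gamma$ modulo $B_\Gamma$. This drop is realised by an intermediate extension $C'$. Using the relation $\delta(C/B)=\delta(C/C')+\delta(C'/B)$ together with the inequality $\mathrm{td}(C/C')\ge\mathrm{td}(C_i/B_i)$, we get $\delta(C_i/B_i)\le\delta(C/B)\le 0$. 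Since $B_i$ is $\Gamma$-closed, $C_i=B_i$, so $Y\subseteq {B_i}_\Gamma$ and $Z\subseteq B_i$ for all $i$. Hence $Y\subseteq B_\Gamma$ and $C=B$.

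I expect this submodularity step, and in particular the handling of torsion and divisibility in the module dimension, to be the main obstacle. Torsion is controlled by the convention that extensions contain no new torsion, and divisibility by the fact that the quotients are $\mathrm{End}_0(G)$-vector spaces.

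\emph{Idempotence.} Once $\Gamma\mathrm{cl}(A)$ is known to be $\Gamma$-closed, it belongs to the family whose meet defines $\Gamma\mathrm{cl}(\Gamma\mathrm{cl}(A))$, and it is contained in every member of that family. Hence $\Gamma\mathrm{cl}(\Gamma\mathrm{cl}(A))=\Gamma\mathrm{cl}(A)$.

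\emph{Finite character.} Consider the union $U=\bigcup_{A_0}\Gamma\mathrm{cl}(A_0)$ over finite $A_0\subseteq A$. This union is directed, so it is a $\Gamma$-subfield: the underlying fields and the divisible modules form directed unions. It contains $A$. It is also $\Gamma$-closed. Indeed, suppose $C$ is exponentially finitely generated over $U$ by a finite $Y$ with $\delta(C/U)\le0$. Only finitely many elements of $U$ are witnessed in the algebraic relations and the module relations among $Y$ over $U$. These lie in some $\Gamma\mathrm{cl}(A_0)$, and then $\delta(\langle\Gamma\mathrm{cl}(A_0),Y\rangle/\Gamma\mathrm{cl}(A_0))\le0$. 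As $\Gamma\mathrm{cl}(A_0)$ is $\Gamma$-closed, this forces $Y\subseteq\Gamma\mathrm{cl}(A_0)_\Gamma\subseteq U_\Gamma$. Thus $U\supseteq\Gamma\mathrm{cl}(A)$, and the reverse inclusion follows from monotonicity. Therefore $\Gamma\mathrm{cl}(A)=U$, which gives the finite character property.
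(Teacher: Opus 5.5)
Your proposal is correct and is essentially the argument the paper defers to (\cite[Proposition 4.14]{Bays_2018}): you show that $\bigwedge$ of $\Gamma$-closed $\Gamma$-subfields is $\Gamma$-closed via a minimal counterexample in $\mathrm{ldim}$, deduce idempotence from this, and obtain finite character from the directed union of the $\Gamma\mathrm{cl}^F(A_0)$. One small slip: minimality only gives $\delta(C'/B)>0$ for exponentially finitely generated $C'$ with $B\subseteq C'\subseteq C$ and $C'\neq B, C$, so $\delta(C_i/B_i)\le\delta(C/B)$ can fail when $C'=C$; but in that case $C\subseteq B_i$ and $C_i=B_i$ trivially, so the conclusion you actually need, $\delta(C_i/B_i)\le 0$, holds in every case.
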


\subsection{Bays-Kirby-Wilkie}\label{BKWSubsection}
As in \cite[Theorem 3.7]{boxall2024points}, we now prove an analogue in our setting of the Bays-Kirby-Wilkie result in \cite[Theorem 1.2]{aSchanuelProperty}. Suppose further that $F_\Gamma$ is the graph of a map which will denote by $\mathrm{exp}_G$. As a convention, we will denote $(\mathrm{exp}_G(a_1), \ldots, \mathrm{exp}_G(a_n))$ as $\mathrm{exp}_G(a)$, for any $n \in \mathbb{N}$ and $a = (a_1, \ldots, a_n) \in F^n$, when $\mathrm{exp}_G(a_i)$ is defined for all $i \in \{1, \ldots, n\}$.

\begin{theorem}\label{additiveIndependenceTheorem}
    Let $N \in \mathbb{N}$. Let $a = (a_1, \ldots, a_n) \in \pi_1(F_\Gamma)^n$ and let $K \subseteq F$ such that $\Gamma\mathrm{cl}^F(K) = K$. Let $\lambda \in F$ such that $\lambda \not\in K$ and $\lambda a = (\lambda a_1, \ldots, \lambda a_n) \in \pi_1(F_\Gamma)^n$. Assume that $\mathrm{exp}_G(a) = (\mathrm{exp}_G(a_1), \ldots,\mathrm{exp}_G(a_n))$ is additively independent. Then \begin{align*}
        \mathrm{tr.deg}(\mathrm{exp}_G(a), \mathrm{exp}_G(\lambda a)/K, \lambda) \geq gn.
    \end{align*}
\end{theorem}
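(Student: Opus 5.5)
We follow the Bays--Kirby--Wilkie argument as adapted by Boxall in \cite[Theorem 3.7]{boxall2024points}: exploit that $K$ is $\Gamma$-closed through a predimension count on a suitable exponentially finitely generated extension of $K$, and obtain the bound as the transcendence degree needed to keep that predimension positive. Suppose, for contradiction, that $\mathrm{tr.deg}(\mathrm{exp}_G(a), \mathrm{exp}_G(\lambda a)/K, \lambda) < gn$.

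\textbf{Step 1 (a $\mathrm{End}_0(G)$-linear family of exponents).} Let $\lambda$ act on $\pi_1(F_\Gamma)$ by scalar multiplication in $\mathbb{G}_{\rm a}^g(F)$; this action commutes with the $\alpha_\mu$ for $\mu\in\mathrm{End}(G)$. For $m \in \mathbb{N}$, consider the $\Gamma$-field $B_m$ generated over $K$ by the points of $F_\Gamma$ lying above $\lambda^j a_i$, $0 \leq j \leq m$, $1\le i\le n$, whenever these lie in $\pi_1(F_\Gamma)$. In general they need not, so we mimic Boxall: restrict to the two-step family $a, \lambda a$ and replace powers of $\lambda$ by $\mathbb{Q}(\lambda)$-linear combinations $(p+q\lambda)a$ with $p,q\in\mathbb{Z}$. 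Each such point lies in $\pi_1(F_\Gamma)$ because $\pi_1(F_\Gamma)$ is an $\mathrm{End}(G)$-module containing $a$ and $\lambda a$. Hence we use $B_{M} := \langle K, \{(p+q\lambda)a, \mathrm{exp}_G((p+q\lambda)a) : |p|,|q|\le M\}\rangle$. Because the graph is an $\mathrm{End}(G)$-module, $\mathrm{exp}_G((p+q\lambda)a) = p\,\mathrm{exp}_G(a) + q\,\mathrm{exp}_G(\lambda a)$, so $B_M$ is already generated by $a,\lambda a$ and their images, and its transcendence degree over $K$ is at most $\mathrm{tr.deg}(a,\lambda a,\mathrm{exp}_G(a),\mathrm{exp}_G(\lambda a)/K)\le gn + 1 + \mathrm{tr.deg}(\exp_G(a),\exp_G(\lambda a)/K,\lambda)$, since $\lambda a$ is a $K(\lambda,a)$-point and the tangent coordinates of $a$ contribute at most $gn$.

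\textbf{Step 2 (linear dimension).} We must show $\mathrm{ldim}_{\mathrm{End}_0(G)}$ of $\langle a,\lambda a\rangle$ modulo $K_\Gamma$ is $2n$. If $\sum \mu_i a_i + \sum \nu_i \lambda a_i \in \pi_1(K_\Gamma)$ with $\mu_i,\nu_i\in\mathrm{End}_0(G)$, we apply the differentials $\alpha_\mu$, which are $K$-linear maps of $K^g$, and use $\lambda\notin K$. Since $K$ is $\Gamma$-closed it is algebraically closed, and we separate the equation into a $\lambda$-free part and a $\lambda$-part. Specialising over $K$ then gives $\sum\nu_i a_i$ and $\sum\mu_i a_i$ in $\pi_1(K_\Gamma)$ unless $\lambda$ is algebraic over the field they generate. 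The first alternative contradicts the additive independence of $\mathrm{exp}_G(a)$, which we must interpret as independence over $K_\Gamma$; we will use that $K_\Gamma$ contains all torsion.

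\textbf{Step 3 (predimension and conclusion).} Set $d=2g$, the normalisation making $\delta$ of a generic point of $TG$ equal to $g$. Then
\[\delta(B/K)= \mathrm{tr.deg}(B/K) - 2g\cdot\mathrm{ldim}\le \mathrm{tr.deg}(a,\lambda a/K) + \mathrm{tr.deg}(\exp_G(a),\exp_G(\lambda a)/K(a,\lambda a)) - 2gn.\]
Since $K$ is $\Gamma$-closed and $B\ne K$, we have $\delta(B/K)>0$. However, the first term is at most $gn+1$, the tangent coordinates of $\lambda a$ being determined by $a$ and $\lambda$, so this alone does not suffice. Following BKW, we therefore bound instead the extension generated by $a$ alone, then by $\lambda a$ alone, and use submodularity of $\delta$ together with the fact that $\lambda$ costs only one transcendence degree. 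The crucial inequality becomes $gn \le \mathrm{tr.deg}(\exp_G(a),\exp_G(\lambda a)/K,\lambda)$ after cancelling the $gn+1$ from the tangent side against the $2gn$ of linear dimension, where the $+1$ is absorbed because $\lambda\notin K$ and hence $\lambda$ lies outside $\Gamma\mathrm{cl}(K)$. We expect the main obstacle to be exactly this bookkeeping, namely getting the inequality to be non-strict and correctly handling $\lambda$. We would first try the BKW trick of passing to a generic $(p,q)$ family, $M\to\infty$, so that the $+1$ becomes negligible relative to growth in linear dimension; failing that, we would use Step 2 to argue that the ``$\lambda$-part'' of $B$ itself has predimension at most $0$ over $K(\lambda)$.
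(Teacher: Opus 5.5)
There is a genuine gap, and it is in Step 2, not in the bookkeeping you worry about in Step 3. The claim $\mathrm{ldim}_{\mathrm{End}_0(G)}(a,\lambda a/K_\Gamma)=2n$ is false in general. Take $n=2$ and $a=(c,\lambda c)$ with $c,\lambda c,\lambda^2 c\in\pi_1(F_\Gamma)$ and $\mathrm{exp}_G(c),\mathrm{exp}_G(\lambda c)$ additively independent. Then $a_2=\lambda a_1$, so the span of $(a,\lambda a)$ has dimension at most $3$.

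Your splitting of a relation into a ``$\lambda$-free part'' and a ``$\lambda$-part'' is not legitimate, because the $a_i$ do not lie over $K$. Indeed $\lambda$ lies in the field generated by the coordinates of $a$ and $\lambda a$, so your escape clause is exactly the case that occurs. Moreover, the hypothesis only gives independence of $a$ modulo $\mathrm{ker}$. Reading it as independence over $K_\Gamma$ adds an assumption the theorem does not make, and nobody checks it where the theorem is applied in Theorem~\ref{mainResult}.

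Set $T:=\mathrm{tr.deg}(\mathrm{exp}_G(a),\mathrm{exp}_G(\lambda a)/K,\lambda)$. With your crude bound $\mathrm{tr.deg}(a,\lambda a/K,\lambda)\le gn$, the example only yields $T\ge g$ rather than $2g$.

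Conversely, if Step 2 were true, your count would already finish: $1\le\delta(B/K)\le gn+1+T-2gn$ gives $T\ge gn$. This uses the correct normalisation $d=g$; with $d=2g$ a generic point of $TG$ has $\delta=0$, not $g$. So the $+1$ is not the obstacle. Neither remedy you list addresses the real failure: letting $M\to\infty$ changes nothing, since your $(p,q)$-family generates the same $\Gamma$-field for every $M$, and submodularity does not help either.

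The missing idea, which is how the paper proceeds following Bays--Kirby--Wilkie, is to measure the tangent side with the division algebra $\mathrm{End}_0(G)(\lambda)$.

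\begin{enumerate}
\item For $b=(a,\lambda a)$, use $\mathrm{tr.deg}(b/K,\lambda)\le g\,\mathrm{ldim}_{\mathrm{End}_0(G)(\lambda)}(b/K)$ in place of $gn$. Then any drop in the $\mathrm{End}_0(G)$-dimension of $b$ is matched by a drop in its transcendence degree.
\item Combine this with $\delta\ge 1$. This holds by $\Gamma$-closedness, since $\lambda$ lies in the $\Gamma$-field generated over $K$ by $(b,\mathrm{exp}_G(b))$ but not in $K$, and it absorbs the $+1$ coming from adjoining $\lambda$. The result is $T+g\,\mathrm{ldim}_{\mathrm{End}_0(G)(\lambda)}(b/K)-g\,\mathrm{ldim}_{\mathrm{End}_0(G)}(b,\mathrm{exp}_G(b)/K_\Gamma)\ge 0$.
\item Replace $K_\Gamma$ by $\mathrm{ker}$. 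Pick finitely many $l\in K_\Gamma$ realising $\mathrm{ldim}(b/K_\Gamma)$. Then $\mathrm{ldim}_{\mathrm{End}_0(G)}(l/\mathrm{ker})=\mathrm{ldim}_{\mathrm{End}_0(G)(\lambda)}(l/\mathrm{ker})$, because $l$ lies over $K$ and $\lambda\notin K$. This is the one place where separating powers of $\lambda$ is valid.
\item The addition formula then reduces everything to $\mathrm{ldim}_{\mathrm{End}_0(G)}(b/\mathrm{ker})-\mathrm{ldim}_{\mathrm{End}_0(G)(\lambda)}(b/\mathrm{ker})\ge n$. The BKW linear algebra gives this from independence modulo $\mathrm{ker}$ alone.
\end{enumerate}

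It is this \emph{difference} of the two dimensions, not $\mathrm{ldim}_{\mathrm{End}_0(G)}(b)$ itself, that is robustly at least $n$. In the example above, for generic $c$, it is $3-1=2$.
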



\begin{proof}
Let $b = (a, \lambda a)$. Since $\lambda \not\in K$ and $K$ is $\Gamma$-closed, it follows that 
\begin{align*}
        \mathrm{tr.deg}(b, \mathrm{exp}_G(b)/K) -g\mathrm{ldim}_{\mathrm{End}_0(G)}(b, \mathrm{exp}_G(b)/K_\Gamma) \geq 1
    \end{align*} which gives that \begin{align*}
        \mathrm{tr.deg}(b/K) + \mathrm{tr.deg}(\mathrm{exp}_G(b)/K, b) - g\mathrm{ldim}_{\mathrm{End}_0(G)}(b, \mathrm{exp}_G(b)/K_\Gamma) \geq 1.
    \end{align*}
    We have \begin{align*}
        \mathrm{tr.deg}(b/K, \lambda) \leq g\mathrm{ldim}_{\mathrm{End}_0(G)(\lambda)}(b/K)
    \end{align*} where $\mathrm{End}_0(G)(\lambda)$ denotes the division algebra generated by $\mathrm{End}_0(G)$ and the central element $\lambda$. We also have that \begin{align*}
        \mathrm{tr.deg}(\mathrm{exp}_G(b)/K, b) \leq \mathrm{tr.deg}(\mathrm{exp}_G(b)/K, \lambda).
    \end{align*} It follows that \begin{align*}
        \mathrm{tr.deg}(\mathrm{exp}_G(b)/K, \lambda) + g\mathrm{ldim}_{\mathrm{End}_0(G)(\lambda)}(b/K) - g\mathrm{ldim}_{\mathrm{End}_0(G)}(b, \mathrm{exp}_G(b)/K_\Gamma) \geq 0.
    \end{align*} We now consider $\mathrm{ker} = \{c \in F \ | \ \mathrm{exp}_G(c) = 0_G\}$ and let $\mathrm{ker}_\Gamma = \{(c, \mathrm{exp}_G(c)) \ | \ c \in \mathrm{ker}\}$. By finite character of $\mathrm{span}$, there exists $(l, \mathrm{exp}_G(l)) \in K_\Gamma^n$ for some $n \in \mathbb{N}$ such that \begin{align*}
        g\mathrm{ldim}_{\mathrm{End}_0(G)}(b, \mathrm{exp}_G(b)/K_\Gamma) = g\mathrm{ldim}_{\mathrm{End}_0(G)}(b, \mathrm{exp}_G(b)/\langle \{(l, \mathrm{exp}_G(l))\} \cup \mathrm{ker}_\Gamma\rangle).
    \end{align*} 
    Let $K'$ be the $End(G)$-module generated by $\{l\} \cup \mathrm{ker}$. 
    We have $K' \subseteq K$ so that \begin{align*}
        \mathrm{tr.deg}(\mathrm{exp}_G(b)/K, \lambda) + g\mathrm{ldim}_{\mathrm{End}_0(G)(\lambda)}(b/K') - g\mathrm{ldim}_{\mathrm{End}_0(G)}(b, \mathrm{exp}_G(b)/K_\Gamma) \geq 0.
    \end{align*} We now have \begin{align*}
        & \ \ \ \ g\mathrm{ldim}_{\mathrm{End}_0(G)}(b/\mathrm{ker}) - g\mathrm{ldim}_{\mathrm{End}_0(G)}(b, \mathrm{exp}_G(b)/K_\Gamma) \\
        &= g\mathrm{ldim}_{\mathrm{End}_0(G)}(b, \mathrm{exp}_G(b)/\mathrm{ker}_\Gamma) - g\mathrm{ldim}_{\mathrm{End}_0(G)}(b, \mathrm{exp}_G(b)/\langle \{(l, \mathrm{exp}_G(l))\} \cup \mathrm{ker}_\Gamma\rangle) \\
        &= (d\mathrm{ldim}_{\mathrm{End}_0(G)}(b, \mathrm{exp}_G(b), l, \mathrm{exp}_G(l)/\mathrm{ker}_\Gamma) - g\mathrm{ldim}_{\mathrm{End}_0(G)}(l, \mathrm{exp}_G(l)/\langle\{b, \mathrm{exp}_G(b)\} \cup \mathrm{ker}_\Gamma\rangle)) \\ & \ \ \ \  - (d\mathrm{ldim}_{\mathrm{End}_0(G)}(b, \mathrm{exp}_G(b), l, \mathrm{exp}_G(l)/\mathrm{ker}_\Gamma) - g\mathrm{ldim}_{\mathrm{End}_0(G)}(l, \mathrm{exp}_G(l)/\mathrm{ker}_\Gamma)) \\
        &= g\mathrm{ldim}_{\mathrm{End}_0(G)}(l, \mathrm{exp}_G(l)/\mathrm{ker}_\Gamma) - g\mathrm{ldim}_{\mathrm{End}_0(G)}(l, \mathrm{exp}_G(l)/\langle \{(b, \mathrm{exp}_G(b))\} \cup \mathrm{ker}_\Gamma\rangle) \\
        &= g\mathrm{ldim}_{\mathrm{End}_0(G)}(l/\mathrm{ker}) - g\mathrm{ldim}_{\mathrm{End}_0(G)}(l, \mathrm{exp}_G(l)/\langle \{(b, \mathrm{exp}_G(b))\} \cup \mathrm{ker}_\Gamma\rangle) 
        \end{align*}
        where the third line above follows from the addition formula. 
        We next claim that \begin{align*}
             g\mathrm{ldim}_{\mathrm{End}_0(G)}(l/\mathrm{ker}) =  g\mathrm{ldim}_{\mathrm{End}_0(G)(\lambda)}(l/\mathrm{ker}).
        \end{align*} Arguing for a contradiction, suppose that this was not the case. There then exists a non-trivial dependence \begin{align}\label{inclusion2}
            \sum_{i = 1}^r p_i(\lambda)u_i \in \langle\mathrm{ker}\rangle_{\mathrm{End}_0(G)(\lambda)}
        \end{align} where $r = \mathrm{ldim}_{\mathrm{End}_0(G)}(l/\mathrm{ker})$ and $u_1, \ldots, u_r \in \langle l, \mathrm{ker}\rangle_{\mathrm{End}_0(G)}$ are chosen such that their images form a basis for $\langle l, \mathrm{ker}\rangle_{\mathrm{End}_0(G)}/\langle\mathrm{ker}\rangle_{\mathrm{End}_0(G)}$ and $p_i \in \mathrm{End}_0(G)(X)$ for all $i \in \{1, \ldots, r\}$. It follows that \begin{align*}
            \sum_{i = 1}^r p_i(\lambda)u_i + \sum_{j= 1}^s q_j(\lambda) v_j = 0
        \end{align*} where $s \in \mathbb{N}$, $v_1, \ldots, v_s \in \mathrm{ker}$ and $q_j \in \mathrm{End}_0(G)(X)$ for all $j \in \{1, \ldots, s\}$. Clearing denominators, we may assume that 
        $p_i, q_j \in \mathrm{End}_0(G)[X]$ for all $i \in \{1, \ldots, r\}$ and all $j \in \{1, \ldots, s\}$.
        We then have \begin{align}\label{equation1}
            \sum^t_{k = 1} c_k\lambda^k = 0
        \end{align} where $t \in \mathbb{N}$ and $c_k$ is a linear combination of the $u_i$'s and $v_j$'s for all $k \in \{1, \ldots, t\}$. Since $\mathrm{End}_0(G)$ is defined over a finite extension of $\mathbb{Q}$, it follows that $\mathrm{End}_0(G)$ is defined over $K$. But $\lambda \not\in K$ which, by (\ref{equation1}), implies that $c_k = 0$ for all $k \in \{1, \ldots, t\}$. This implies that $p_i, q_j \in \mathrm{End}_0(G)$ for all $i \in \{1, \ldots, r\}$ and $j \in \{1, \ldots, s\}$. By our choice of $u_i$'s, we must have that $p_i = 0$ for all $i \in \{1, \ldots, r\}$, contradicting that (\ref{inclusion2}) is non-trivial. It follows that 
        \begin{align*}
        &d\mathrm{ldim}_{\mathrm{End}_0(G)}(l/\mathrm{ker}) - g\mathrm{ldim}_{\mathrm{End}_0(G)}(l, \mathrm{exp}_G(l)/\langle \{(b, \mathrm{exp}_G(b))\} \cup \mathrm{ker}_\Gamma\rangle) \\
        &=  g\mathrm{ldim}_{\mathrm{End}_0(G)(\lambda)}(l/\mathrm{ker}) - g\mathrm{ldim}_{\mathrm{End}_0(G)}(l, \mathrm{exp}_G(l)/\langle \{(b, \mathrm{exp}_G(b))\} \cup \mathrm{ker}_\Gamma\rangle) \\
        &= g\mathrm{ldim}_{\mathrm{End}_0(G)(\lambda)}(l/\mathrm{ker}) - g\mathrm{ldim}_{\mathrm{End}_0(G)}(l/\{b\} \cup \mathrm{ker}) \\
        &\leq g\mathrm{ldim}_{\mathrm{End}_0(G)(\lambda)}(l/\mathrm{ker}) - g\mathrm{ldim}_{\mathrm{End}_0(G)(\lambda)}(l/\{b\} \cup \mathrm{ker}) \\
        &= (d\mathrm{ldim}_{\mathrm{End}_0(G)(\lambda)}(l, b/\mathrm{ker}) - g\mathrm{ldim}_{\mathrm{End}_0(G)(\lambda)}(b/l \cup \mathrm{ker}))  \\
        & \ \ \ \ - (d\mathrm{ldim}_{\mathrm{End}_0(G)(\lambda)}(l, b/\mathrm{ker}) - g\mathrm{ldim}_{\mathrm{End}_0(G)(\lambda)}(b/\mathrm{ker})) \\
        &= g\mathrm{ldim}_{\mathrm{End}_0(G)(\lambda)}(b/\mathrm{ker}) - g\mathrm{ldim}_{\mathrm{End}_0(G)(\lambda)}(b/K').
    \end{align*} where the second-last line above follow from the addition formula. 
    We deduce that \begin{align*}
        \mathrm{ldim}_{\mathrm{End}_0(G)}(b, \mathrm{exp}_G(b)/K_\Gamma) - \mathrm{ldim}_{\mathrm{End}_0(G)(\lambda)}(b/K') \geq \mathrm{ldim}_{\mathrm{End}_0(G)}(b/\mathrm{ker}) - \mathrm{ldim}_{\mathrm{End}_0(G)(\lambda)}(b/\mathrm{ker})
    \end{align*} so that \begin{align*}
        \mathrm{tr.deg}(\mathrm{exp}_G(b)/K, \lambda) + g\mathrm{ldim}_{\mathrm{End}_0(G)(\lambda)}(b/\mathrm{ker}) - g\mathrm{ldim}_{\mathrm{End}_0(G)}(b/\mathrm{ker}) \geq 0.
    \end{align*} We need only show that $\mathrm{ldim}_{\mathrm{End}_0(G)}(b/\mathrm{ker}) - \mathrm{ldim}_{\mathrm{End}_0(G)(\lambda)}(b/\mathrm{ker}) \geq n$. This is done analagously to that at the bottom of \cite[p.~4]{aSchanuelProperty}, where we replace $\mathbb{Q}$ with $\mathrm{End}_0(G)$ as appropriate. We note that our $a, b, n$ will correspond respectively to $x, z, n$ in the relevent sections of \cite{aSchanuelProperty}. 
\end{proof}

\subsection{Other related pregeometries}\label{otherPregeometrySubsection}
We now introduce our o-minimal setting. 
As defined in Section \ref{gammaFields}, suppose that $\mathcal{K}_0 = \mathbb{Q}^\mathrm{alg}$. Now, let $\mathrm{exp}_G$ denote the usual exponential map of $G$. It is well-known that 
$\mathrm{exp}_G: \mathbb{G}_{\rm{a}}^g(\mathbb{C}) \rightarrow G(\mathbb{C})$, when restricted to a suitable fundamental domain $\mathbb{F}_G$ of $\mathrm{exp}_G$ centered at $(0, \ldots, 0) \in \mathbb{G}_{\rm{a}}^g(\mathbb{C})$, is definable in $\mathbb{R}_{\mathrm{an}, \mathrm{exp}}$. Let $\mathcal{R}$ be a reduct of $\mathbb{R}_{\mathrm{an}, \mathrm{exp}}$ with countable language where $\mathrm{exp}_G|_{\mathbb{F}_G}$ is definable. Replacing $\mathcal{R}$ with an appropriate elementary extension, we may further assume that $\mathcal{R}$ is $\kappa$-saturated for some infinite cardinal $\kappa > 2^{\aleph_0}$ by replacing it with a suitable elementary extension. Let $\mathcal{I}$ and $\mathcal{J}$ denote the infinitesimal elements of $\mathcal{R}$ and $\tilde{\mathcal{R}}$, respectively, and let $\Tilde{\mathcal{R}}$ be an ultrapower of $\mathcal{R}$ with respect to a non-prinicipal ultrafilter $\mathcal{U}$ on $\mathbb{N}$. Let $\Tilde{\mathbb{N}}$, $\Tilde{\mathbb{C}}$ and $\Tilde{\mathcal{I}}$ denote the canonical extension of $\mathbb{N}$, $\mathbb{C}$, and $\mathcal{I}$, respectively, into $\Tilde{\mathcal{R}}$. We will also identify $F = \mathcal{R}^2$ and $\tilde{F} = \tilde{\mathcal{R}}^2$ and consider them as $\Gamma$-fields by setting $F_\Gamma$ to be the graph of $\mathrm{exp}_G$ restricted to $\mathcal{I}$ and $\tilde{F}_\Gamma$ as the graph of $\mathrm{exp}_G$ restricted to equivalence classes in $\tilde{F}$ corresponding to sequences with elements in $\tilde{\mathcal{I}}$.

A key note here is that there are only countably many definable functions in $\tilde{\mathcal{R}}$, giving the following. 

\begin{lemma}\label{dclCardinalityLemma}
    Let $C \subseteq \tilde{F}$, then $|\mathrm{dcl}(C \cup \mathbb{C})| \leq \mathrm{max}\{|C|, |\mathbb{C}|\}$.  
\end{lemma}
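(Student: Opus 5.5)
The plan is a counting argument: every element of $\mathrm{dcl}(C \cup \mathbb{C})$ is the value of some definable function of $\tilde{\mathcal{R}}$ at a finite tuple from the parameter set. Here $\mathrm{dcl}$ is the definable closure in the o-minimal structure $\tilde{\mathcal{R}}$, and $C \subseteq \tilde{F} = \tilde{\mathcal{R}}^2$ is viewed as a subset of $\tilde{\mathcal{R}}$ via its real and imaginary coordinates. Those coordinates form a set of cardinality at most $\max\{|C|,\aleph_0\}$, and similarly for $\mathbb{C}$. I will bound the number of such function values.

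\emph{Step 1.} Since $\mathcal{R}$ has a countable language $L$, so does its ultrapower $\tilde{\mathcal{R}}$, which has the same language. Hence there are only countably many $L$-formulas $\varphi(x,\bar y)$ without parameters.

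\emph{Step 2.} Let $A$ be the set of real coordinates of the elements of $C \cup \mathbb{C}$. An element $d \in \mathrm{dcl}(A)$ satisfies $\{d\} = \varphi(\tilde{\mathcal{R}}, \bar a)$ for some parameter-free formula $\varphi$ and some finite tuple $\bar a$ from $A$. The map $(\varphi, \bar a) \mapsto d$ is therefore a surjection onto $\mathrm{dcl}(A)$ from a subset of $\mathrm{Form}_L \times \bigcup_{k} A^k$. It follows that
\[
|\mathrm{dcl}(A)| \leq \aleph_0 \cdot \max\{|A|, \aleph_0\} \leq \max\{|C|, |\mathbb{C}|\},
\]
because $|A| \leq \max\{|C|,|\mathbb{C}|\}$ and $|\mathbb{C}| = 2^{\aleph_0} \geq \aleph_0$.

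\emph{Step 3.} Pairing real and imaginary parts to pass back to $\tilde{F}$ does not change the cardinality, so the bound transfers to $\mathrm{dcl}(C\cup\mathbb{C})$.

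There is no genuine obstacle here. The only point needing care is that "definable functions" must be taken to mean parameter-free ones, or ones with parameters already in $C \cup \mathbb{C}$. With parameters from $\tilde{\mathcal{R}}$ allowed there are far too many, so the countability of the language is exactly what is being used. The role of the $\mathbb{C}$ term in the maximum is simply to absorb the $\aleph_0$ coming from the countably many formulas, even when $C$ is finite.
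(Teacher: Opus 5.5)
Your proposal is correct and is essentially the paper's argument. The paper justifies the lemma only by remarking that $\tilde{\mathcal{R}}$, having countable language, has only countably many definable functions, and your count of pairs (parameter-free formula, finite tuple of coordinates from $C \cup \mathbb{C}$) makes exactly that remark explicit; your caveat that these functions must be parameter-free is the right reading of the paper's terse phrasing, which would be false if arbitrary parameters from $\tilde{\mathcal{R}}$ were allowed.
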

We now reintroduce, in our setting, the analogue of the holomorphic closure (see \cite{wilkie}) or exponential closure (see \cite{KirbyAlgebraic}) due to Utreras Alarcon in \cite[Chapter 5]{alarcon2015model}.  Henceforth, definable refers to being definable with respect to $\tilde{\mathcal{R}}$. Additionally, in this non-standard setting, and throughout this section, a holomorphic function refers to that of Peterzil and Starchenko defined in \cite[Section 2]{starchenko}.

\begin{definition}
    Let $A \subseteq \tilde{F}$. Define the \emph{definable holomorphic closure $\mathrm{hcl}^{\tilde{F}}(A)$ of $G$} as the set of all $x \in \tilde{F}$ such that there exists $n \in \mathbb{N}_0$, a $\mathbb{C}$-definable function $f:\tilde{F}^n \rightarrow\tilde{F}$ and $a_1, \ldots, a_n \in A$ where $f$ is holomorphic in a neighbourhood of $(a_1, \ldots, a_n)$ and $f(a_1, \ldots, a_n) = x$. 
\end{definition}

The above definition was only seen to be a finitary closure operator and, as such, Utreras Alarcon supplemented the above definition to define a pregeometry of the form below. 

\begin{definition}
    Given a finite $A \subseteq \tilde{F}$, the \emph{completed definable holomorphic closure $\overline{\mathrm{hcl}}^{\tilde{F}}(A)$ of $G$} is defined as the set of all $x \in\tilde{F}$ such that there exist an $n \in \mathbb{N}$ and a definable open set $D \subseteq\tilde{F}^n$ satisfying \begin{align*}
        x \in \bigcap\limits_{(a_1, \ldots, a_n) \in D}\mathrm{hcl}^{\tilde{F}}(A \cup \{a_1, \ldots, a_n\}).
    \end{align*} For general $A \subseteq\tilde{F}$, we define $\overline{\mathrm{hcl}}^{\tilde{F}}(A)$ as $\bigcup_{A_0 \subseteq A  \text{ finite}}\overline{\mathrm{hcl}}^{\tilde{F}}(A_0)$. 
\end{definition}

We now define derivations that will play a crucial role in giving properties of $\Gamma\mathrm{cl}$.
\begin{definition}
\label{derivationDefinition}
    A \emph{derivation on a subfield $K$ of $\tilde{F}$} is a map $\delta: K \rightarrow K$ such that, for all $a, b \in K$, \begin{itemize}
        \item $\delta(a + b) = \delta(a) + \delta(b)$
        \item $\delta(ab) = b\delta(a) + a\delta(b)$.
    \end{itemize} We call $\mathrm{ker}(\delta)$ the constant field of $\delta$. Let $f(z_1, \ldots, z_n)$ be a holomorphic $\tilde{F}$-valued function, $a = (a_1, \ldots, a_n) \in \tilde{F}^n$ and $\delta$ a derivation on $\tilde{F}$. We say that \emph{$\delta$ respects $f$ at $a$} if \begin{align*}
        \delta(f(a)) = \sum\limits_{i = 1}^n \frac{\partial f}{\partial z_i}(a) \cdot \delta(a_i) 
    \end{align*} and say that \emph{$\delta$ respects $f$} if \emph{$\delta$ respects $f$} for all $a \in \mathrm{dom}(f)$. 
\end{definition}



We now introduce a pregeometry due to Wilkie in \cite{wilkie}. 

\begin{definition}
    Let $\Delta$ denote the collection of all holomorphic $\tilde{F}$-valued functions definable over $\mathbb{C}$ defined in a neighbourhood of the origin and denote the set of all derivations that respect every function in $\Delta$ as $\mathrm{Der}_{\tilde{F}}(\Delta)$. For any $A \subseteq \tilde{F}$, we define $DD_\Delta(A)$ as the set of all $a \in \tilde{F}$ such that there exists a finite subset $A_0 \subseteq A$ such that every $\delta \in \mathrm{Der}_{\tilde{F}}(\Delta)$ that vanishes at $A_0$ will vanish at $a$.
\end{definition}

%

Utreras Alarcon in \cite[Theorem 4.4.3, p.~43]{alarcon2015model} showed that these pregeometries are indeed the same.

\begin{theorem}
\label{pregeometryDDALGAN}
$\overline{\mathrm{hcl}}^{\tilde{F}}$ and $DD_\Delta$ are pregeometries and $DD_\delta(A) = \overline{\mathrm{hcl}}^{\tilde{F}}(A)$ for all $A \subseteq \tilde{F}$. 
\end{theorem}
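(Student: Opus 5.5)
The statement is attributed to Utreras Alarcon \cite[Theorem 4.4.3]{alarcon2015model}. The plan is to follow Wilkie's original argument for $\mathbb{R}_{\mathrm{an}}$ in \cite{wilkie}, transposed to the nonstandard o-minimal structure $\tilde{\mathcal{R}}$ with Peterzil--Starchenko holomorphy. The argument has two halves. First, show that $DD_\Delta$ is a pregeometry; this is essentially formal. Second, show the equality $DD_\Delta(A) = \overline{\mathrm{hcl}}^{\tilde{F}}(A)$. The pregeometry property of $\overline{\mathrm{hcl}}^{\tilde{F}}$, in particular exchange, is then inherited from $DD_\Delta$.

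\textbf{$DD_\Delta$ is a pregeometry.} Extensivity, monotonicity and finite character are immediate from the definition, since vanishing on a finite $A_0$ is built in. For idempotence, if $a \in DD_\Delta(B)$ with $B \subseteq DD_\Delta(A)$, choose finite witnesses $B_0 \subseteq B$ and, for each $b \in B_0$, a finite $A_b \subseteq A$. Any $\delta$ vanishing on $\bigcup_b A_b$ then vanishes on $B_0$, and hence at $a$.

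For exchange I would use the standard linear-algebra description. $DD_\Delta(A_0)$ consists of the $a$ such that $\delta(a)$ is determined by the values of $\delta$ on $A_0$, in the sense that vanishing propagates. Exchange then follows from a rank/duality count on the $\tilde{F}$-vector space $\mathrm{Der}_{\tilde{F}}(\Delta)$ restricted to finitely many points, comparing the kernel of the evaluation map $\delta \mapsto (\delta(a_i))_i$. The key input is that $\mathrm{Der}_{\tilde{F}}(\Delta)$ is rich enough: any assignment of values on a $DD_\Delta$-independent finite tuple extends to a derivation in $\mathrm{Der}_{\tilde{F}}(\Delta)$. This is a Zorn's-lemma extension argument, as in Wilkie's paper, where each step adjoins $f(c)$ for $f \in \Delta$ and uses the chain rule for definable holomorphic functions.

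\textbf{The inclusion $\overline{\mathrm{hcl}} \subseteq DD_\Delta$.} If $x = f(a_1,\dots,a_n)$ with $f$ holomorphic near $a$, then any $\delta$ respecting all of $\Delta$ respects the translate of $f$. Hence $\delta(x) = \sum_i \partial_i f(a)\,\delta(a_i)$, so $x \in DD_\Delta(a)$, and therefore $\mathrm{hcl} \subseteq DD_\Delta$.

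For the completed closure, suppose $x \in \mathrm{hcl}(A_0 \cup \{d\})$ for all $d$ in a definable open $D$. I would pick $d \in D$ that is $DD_\Delta$-generic over $A_0 \cup \{x\}$; saturation of $\tilde{\mathcal{R}}$ and Lemma \ref{dclCardinalityLemma} give enough room for this. Then $x \in DD_\Delta(A_0 \cup \{d\})$ with $d$ independent over $A_0 \cup \{x\}$, and exchange gives $x \in DD_\Delta(A_0)$.

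\textbf{The inclusion $DD_\Delta \subseteq \overline{\mathrm{hcl}}$.} This is the main obstacle. Given $x \notin \overline{\mathrm{hcl}}(A_0)$, we must construct $\delta \in \mathrm{Der}_{\tilde{F}}(\Delta)$ with $\delta|_{A_0} = 0$ and $\delta(x) \neq 0$. I would argue as follows.
\begin{enumerate}
\item Show first that $\overline{\mathrm{hcl}}$ satisfies exchange directly, using o-minimal dimension theory. Take a $\overline{\mathrm{hcl}}$-basis $(x, c_1, \ldots)$ over $A_0$ of a finitely generated piece.
\item Define $\delta$ on the $\overline{\mathrm{hcl}}$-closure by sending $x \mapsto 1$ and the other basis elements to $0$, extending via $\delta(f(\bar e)) = \sum_i \partial_i f(\bar e)\,\delta(e_i)$.
\item Check well-definedness. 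If two holomorphic presentations agree, their derivatives agree on the independent coordinates. The crux is a nonstandard identity principle: a definable holomorphic function vanishing at a generic point of an open set vanishes identically nearby, so its partial derivatives vanish there too. This needs the Peterzil--Starchenko theory.
\item Extend $\delta$ to all of $\tilde{F}$ by Zorn's lemma.
\end{enumerate}
The identity-principle step in (3) is where I expect the real difficulty to lie.
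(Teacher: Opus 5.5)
The paper does not prove this statement; it quotes it from Utreras Alarcon \cite[Theorem 4.4.3]{alarcon2015model}, so your outline has to stand on its own. It has two genuine gaps: a circular step in the easy inclusion, and an unproved input to the hard one.

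\textbf{The formal part is fine.} $\mathrm{Der}_{\tilde F}(\Delta)$ is an $\tilde F$-vector space. On the subspace of derivations killing $A_0$, the inclusion $\ker \mathrm{ev}_c \subseteq \ker \mathrm{ev}_b$ with $\mathrm{ev}_b \neq 0$ forces $\mathrm{ev}_b = \mu\,\mathrm{ev}_c$ with $\mu \neq 0$. That is exchange for $DD_\Delta$, and no Zorn-type ``richness'' is involved.

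\textbf{The easy inclusion is circular as written.} In $\overline{\mathrm{hcl}}^{\tilde F} \subseteq DD_\Delta$, saturation and Lemma \ref{dclCardinalityLemma} give you a $\mathrm{dcl}$-generic $d \in D$, not a $DD_\Delta$-generic one. The existence of $DD_\Delta$-independent points is not formal: if $\mathrm{Der}_{\tilde F}(\Delta)$ were $0$, then $DD_\Delta(\emptyset) = \tilde F$. Knowing that $DD_\Delta$ of a small set is small amounts to having many $\Delta$-respecting derivations, which is essentially the hard inclusion.

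The repair needs no exchange. Take $d$ $\mathrm{dcl}$-generic over $\mathbb{C}$, $A_0$, $x$ and the parameters of $D$, and take $f$ with $x = f(\bar a, d)$. The set $\{w : f(\bar a, w) = x\}$ is defined over $\mathbb{C} \cup \bar a \cup \{x\}$ and contains the generic point $d$, hence a neighbourhood of $d$. So $\partial f/\partial w_j(\bar a, d) = 0$ for all $j$, and the chain rule gives $\delta(x) = 0$ for every $\delta \in \mathrm{Der}_{\tilde F}(\Delta)$ vanishing on $A_0$.

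To know that $\delta$ respects $f$ at the non-standard point $(\bar a, d)$, translate by a $\mathrm{dcl}(\mathbb{C})$-point of the $\mathbb{C}$-definable domain of holomorphy. Such a point exists by definable choice and is killed by $\delta$; translating by $(\bar a, d)$ itself would leave $\Delta$. The same vanishing of $\partial f/\partial w_j$ is what makes your formula in step (2) independent of $\delta(d)$ for the auxiliary generic parameters that elements of $\overline{\mathrm{hcl}}^{\tilde F}$ carry.

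\textbf{The hard inclusion presupposes exchange for $\overline{\mathrm{hcl}}^{\tilde F}$.} Steps (2) and (3) need an $\overline{\mathrm{hcl}}$-independent set over $A_0$ that contains $x$ and spans, and the proof that a maximal independent set spans uses exchange. So exchange for $\overline{\mathrm{hcl}}^{\tilde F}$ is an input to the equality, not something ``inherited from $DD_\Delta$'' afterwards as your overview says. Everything therefore rests on step (1), for which you offer only o-minimal dimension theory.

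That cannot suffice, because the $\overline{\mathrm{hcl}}^{\tilde F}$-closure is strictly smaller than the $\mathrm{dcl}$-closure. For instance, the complex conjugate of a generic $c$ lies in $(\mathrm{dcl}(\{c\} \cup \mathbb{C}))^2$ but not in $\overline{\mathrm{hcl}}^{\tilde F}(\{c\})$.

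What is needed is complex-analytic. Given $y \in \overline{\mathrm{hcl}}^{\tilde F}(A \cup \{x\}) \setminus \overline{\mathrm{hcl}}^{\tilde F}(A)$, write $y = f(\bar a, x, d)$ and split into cases.
\begin{itemize}
\item If $\partial_x f \neq 0$ at the point, invert by the Peterzil--Starchenko definable holomorphic implicit function theorem. The local inverse is made $\mathbb{C}$-definable by adding the centre and radius of a small ball as further generic parameters. This is precisely why the completion is needed in $\tilde F$, where infinitesimal neighbourhoods contain no $\mathbb{C}$-definable points.
\item If $f(\bar a, \cdot, d)$ is constant near $x$, replacing $x$ by a generic point puts $y$ in $\overline{\mathrm{hcl}}^{\tilde F}(A)$, a contradiction.
\item Otherwise, let $m \geq 2$ be the order of vanishing of $f(\bar a, \cdot, d) - y$ at $x$ and invert $\partial_x^{m-1} f = 0$ instead. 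This puts $x$, and hence $y$, in $\overline{\mathrm{hcl}}^{\tilde F}(A)$, again a contradiction.
\end{itemize}
This is the same mechanism as your identity principle in (3), so step (1) is part of the core difficulty, not a preliminary.

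Finally, (4) is not a free Zorn extension: extending a $\Delta$-respecting derivation off a subfield means redoing (2)--(3) over an infinite base. It is cleaner to take an $\overline{\mathrm{hcl}}$-basis of all of $\tilde F$ over $A_0$ from the start.
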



%

In \cite{KirbyAlgebraic}, Kirby used work of Ax in \cite{Ax} to prove a functional transcendence result and also did further work in \cite{semiAbelianKirby} to generalise the work of Ax to semi-abelian varieties. We, too, will make use of Kirby's generalisations. We first need the following definition. 

\begin{definition}
    Let $\mathcal{D}$ be a set of derivations on a field $K$ and let $\mathcal{D}_0 = \{\delta_1, \ldots, \delta_r\} \subseteq \mathcal{D}$ for some $r \in \mathbb{N}$. We define, for any $a = (a_1, \ldots, a_n) \in K^n$, \begin{align*}
        \mathrm{Jac}_{\mathcal{D}_0}(a) := \begin{pmatrix}
                                        \delta_1a_1 & \ldots & \delta_1 a_n\\
                                        \vdots & \ddots &  \vdots \\
                                        \delta_ra_1 & \ldots & \delta_ra_n
                                        \end{pmatrix}
    \end{align*} and define $\mathrm{rkJac}_{\mathcal{D}}(a) := \mathrm{max}\{\mathrm{rk} (\mathrm{Jac}_{\mathcal{D}'}(a)) \ | \ \mathcal{D}' \subseteq \mathcal{D} \text{ is finite}\}$.
\end{definition}

\begin{fact}[Ax-Schanuel \mbox{\cite[Theorem 3.8, p.~37]{semiAbelianKirby}}]\label{SchanuelGeneralization}
    Let $K$ be a field of characteristic $0$, let $\mathcal{D}$ be a collection of derivations on $K$, and let $C$ be the intersection of their constant fields. Let $\Omega_{G, N} \subseteq  T_0G^N \times G^N$ be the solution set given by the exponential differential equation of $G^N$ (that is, the intersection of the solution sets for each $\delta \in \mathcal{D}$). 
    Suppose that $(x, y) \in \Omega_{G, N}$ and $\mathrm{tr.deg}(x, y/C) - \mathrm{rkJac}_{\mathcal{D}}(x, y) < n$. Then there is a proper algebraic subgroup $H$ of $G^N$ and a constant point $\gamma \in TG^N(C)$ such that $(x, y)$ lies in the coset $\gamma \cdot TH$ where $TH$, $TG^N$ denotes the tangent bundle of $H$ and $G^N$, respectively.
\end{fact}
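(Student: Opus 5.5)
The plan is to follow Ax's differential-forms proof of Ax--Schanuel, as Kirby does for semi-abelian varieties. First, reduce to the case where $\mathcal{D}$ is finite and the derivations are $K$-linearly independent on $L := C(x, y)$. By finite character of $\mathrm{rkJac}_{\mathcal{D}}$, we may pick $\mathcal{D}_0 = \{\delta_1, \ldots, \delta_r\}$ realising $r = \mathrm{rkJac}_{\mathcal{D}}(x,y)$. Replacing $K$ by $L^{\mathrm{alg}}$ and $C$ by the common constants of $\mathcal{D}_0$ on $L$ only strengthens the hypothesis, since $C$ can only grow and $\mathrm{tr.deg}(x,y/C)$ can only drop. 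I read the bound $n$ in the statement as $\mathrm{dim}\, G^N = gN$. Set $m := \mathrm{tr.deg}(x,y/C)$, so the assumption is $m - r < gN$.

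Next, translate membership in $\Omega_{G,N}$ into Kähler differentials. Fix a basis $\omega_1, \ldots, \omega_{gN}$ of invariant $1$-forms on $G^N$ over $\mathbb{Q}^{\mathrm{alg}}$, matched to the coordinates $x_1, \ldots, x_{gN}$ of $T_0G^N$. The exponential differential equation says that, for each $\delta \in \mathcal{D}_0$, the forms $\theta_i := dx_i - y^*\omega_i \in \Omega_{L/C}$ satisfy $\langle \theta_i, \delta \rangle = 0$. The space $\Omega_{L/C}$ has $L$-dimension $m$, and the $\delta_j$ (extended to $L$) give $r$ independent functionals on it. Hence the forms $\theta_1, \ldots, \theta_{gN}$ lie in the annihilator $W$ of $\mathcal{D}_0$, and $\mathrm{dim}_L W = m - r < gN$. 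So the $\theta_i$ are $L$-linearly dependent: there is a nontrivial relation $\sum_i c_i \theta_i = 0$ with $c_i \in L$.

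The key step, which I expect to be the main obstacle, is to upgrade this $L$-linear dependence to one with constant coefficients, and then to an algebraic subgroup. For the first part I would use Ax's lemma: a relation among closed forms that holds modulo the submodule generated by the $\theta_i$ can be chosen with coefficients in $C$. Concretely, choose the relation of minimal support, normalise one coefficient to $1$, and apply the exterior derivative. Since each $\theta_i$ is closed ($d\theta_i = -y^*d\omega_i = 0$ because invariant forms on an abelian variety are closed), this gives $\sum_i dc_i \wedge \theta_i = 0$. Combined with minimality and the fact that the $\theta_i$ span an isotropic subspace, this forces $dc_i = 0$, that is, $c_i \in C$ (using that $C$ is algebraically closed in $L$ after passing to algebraic closures).

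For the second part, a $C$-linear relation $\sum c_i \theta_i = 0$ gives $d(\sum c_i x_i) = y^*(\sum c_i \omega_i)$. So the invariant form $\eta = \sum c_i \omega_i$ pulls back to an exact form under $y$. Following Kirby, pass to the Zariski closure $V$ of the locus of $y$ over $C$, and take the smallest algebraic subgroup $H$ such that $V$ lies in a coset of $H$. An invariant form that is exact on $V$ must vanish on $TH$: otherwise it would give a nonconstant regular function on an abelian variety, or contradict Kirby's theorem that invariant forms on semi-abelian varieties have no nontrivial exact restrictions to generating subvarieties. It follows that $H$ is proper, $y \in \gamma_2 H$ with $\gamma_2 \in G^N(C)$, and $x$ satisfies the linear equations cutting out $T_0H$ up to a constant translate $\gamma_1 \in C^{gN}$. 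This places $(x,y)$ in $\gamma\cdot TH$. In carrying this out I would take the most care over the use of simplicity and the endomorphism structure, to ensure that the linear relations on $x$ really define the tangent space of an algebraic subgroup; this is where Fact \ref{subgroupSatisfiesAdditiveRelations} would enter.
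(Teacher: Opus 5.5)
The paper gives no proof of this statement; it quotes it from Kirby, so your outline has to stand on its own. Its architecture is Ax's differential-forms method, which is the right route: closed forms $\theta_i=dx_i-y^*\omega_i$, an $L$-linear dependence from the dimension count, an upgrade to constant coefficients, then an exactness argument on the locus of $y$. However, the step you single out as the crux does not go through as you describe it.

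Closedness, minimality and the identity $\sum_i dc_i\wedge\theta_i=0$ force nothing on their own. In $\Omega_{C(t)/C}$ the closed forms $\theta_1=dt$ and $\theta_2=t\,dt$ satisfy the minimal relation $\theta_2-t\,\theta_1=0$. The wedge identity collapses to $dt\wedge dt=0$, and their span is one-dimensional, so it is isotropic in any sense. Yet the coefficient $t$ is not constant. What is missing is the use of the derivations themselves; the example is consistent with Ax only because no derivation with constant field $C$ kills $dt$.

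The fix is to contract the wedge identity with each $\delta\in\mathcal{D}$. Work in $\Omega_{L/C}\otimes_L K$, since $\delta$ takes values in $K$ rather than $L$:
\[
\iota_\delta\Bigl(\sum_i dc_i\wedge\theta_i\Bigr)=\sum_i\delta(c_i)\,\theta_i-\sum_i\theta_i(\delta)\,dc_i=\sum_i\delta(c_i)\,\theta_i ,
\]
because every $\theta_i$ is killed by every $\delta$. By minimality, the $\theta_i$ in the support other than the normalised one are $L$-independent, hence $K$-independent. So $\delta(c_i)=0$ for all $\delta\in\mathcal{D}$, and $c_i\in C$. Doing this with all of $\mathcal{D}$ also sidesteps a question your first step leaves open: whether enlarging $C$ weakens the conclusion $\gamma\in TG^N(C)$. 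Note also that you only get $\dim_L W\le m-r$, not equality, but that suffices.

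The second half needs two repairs.

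First, the primitive $f=\sum_i c_ix_i$ of $y^*\eta$ lies in $L=C(x,y)$, not in $C(V)=C(y)$. You must observe that $df\in\Omega_{C(y)/C}\otimes L$ forces $f$ to be algebraic over $C(y)$. This follows from exactness of $0\to\Omega_{C(y)/C}\otimes L\to\Omega_{L/C}\to\Omega_{L/C(y)}\to 0$ in characteristic $0$. Then take a trace, or work on a finite cover of $V$, to get exactness on $V$ itself.

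Second, the reason an exact invariant form must vanish on $T_0H$ is not a regular function on an abelian variety. After translating $V$ into $H$, pull $\eta$ back to a smooth projective model $\tilde V$ of $V$, where it is a regular $1$-form. If it equals $df$, then $f$ has no poles, because a pole of order $k$ along a prime divisor gives $df$ a pole of order $k+1$ in characteristic $0$. So $f$ is constant and $\eta|_V=0$. Now apply additivity of invariant forms to the dominant map $V^{2k}\to H$, $(v_j)\mapsto\sum_j(v_{2j-1}-v_{2j})$; this gives $\eta|_{T_0H}=0$, so $H$ is proper. Appealing to a semiabelian version of this fact is unnecessary here, since projectivity of $G$ gives it directly.

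Finally, simplicity of $G$ and Fact~\ref{subgroupSatisfiesAdditiveRelations} play no role. Take any invariant form $\sum_i c'_i\omega_i$ vanishing on $T_0H$. Then $y^*(\sum_i c'_i\omega_i)=0$, so $d(\sum_i c'_ix_i)=\sum_i c'_i\theta_i$ is killed by every $\delta$. Hence $\sum_i c'_ix_i\in C$, which is exactly the statement $x\in\gamma_1+T_0H$.
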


Inspired by arguments used in the proof of Proposition 10.7 in \cite[p.~44]{Bays_2018}, we can now relate $\Gamma\mathrm{cl}$ to $\mathrm{DD}_{\Delta}$. 
\begin{proposition}\label{gammaClosureRelatedToDerivations}
    Let $C \subseteq \tilde{F}$. Then $\Gamma\mathrm{cl}^{\tilde{F}}(C) \subseteq \mathrm{DD}_\Delta(C)$. 
\end{proposition}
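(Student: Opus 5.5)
The plan is to show that $B := \mathrm{DD}_\Delta(C)$, together with an appropriate module, is a $\Gamma$-closed $\Gamma$-subfield of $\tilde{F}$ containing $C$. Since $\Gamma\mathrm{cl}^{\tilde{F}}(C)$ is the meet of all $\Gamma$-closed $\Gamma$-subfields containing $C$, this gives the inclusion. By Theorem \ref{pregeometryDDALGAN}, $\mathrm{DD}_\Delta = \overline{\mathrm{hcl}}^{\tilde{F}}$ is a pregeometry. Every derivation vanishes on $\mathbb{Q}$ and respects addition and multiplication (polynomial maps lie in $\Delta$ after translation), so $B$ is a subfield. It is relatively algebraically closed because any derivation vanishing on $B$ vanishes on $\mathrm{acl}(B)$. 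We take $B_\Gamma := \tilde{F}_\Gamma \cap TG(B)$.

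\textbf{$B_\Gamma$ is a divisible $\mathrm{End}(G)$-submodule.} Closure under the group law and endomorphisms holds because $B$ is a field: the group law of $G$ and each $\lambda\in\mathrm{End}(G)$ are given by algebraic maps over $\mathbb{Q}^\mathrm{alg}$, and the lift $\alpha_\lambda$ is linear with algebraic coefficients. For divisibility, take $(x,\exp_G(x))\in B_\Gamma$ and $y = x/m$. Then $y\in B$ since $B$ is a field, and $\exp_G(y)$ is a coordinate-wise holomorphic $\mathbb{C}$-definable function of $y$ (a restriction of $\exp_G$ to the fundamental domain, translated to the infinitesimal point), so $\exp_G(y) \in \mathrm{hcl}(B)\subseteq B$. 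The same observation shows more generally that whenever $x \in B^g$ with $(x,\exp_G x)\in\tilde F_\Gamma$, we have $\exp_G(x)\in B$. This is the key fact: in $B_\Gamma$ the $G$-coordinates are determined holomorphically by the $\mathbb{G}_{\rm a}$-coordinates.

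\textbf{$B$ is $\Gamma$-closed.} Suppose $B\subseteq L\subseteq \tilde F$ with $L$ exponentially finitely generated over $B$ and $L\neq B$. We must show $\delta(L/B)>0$. Choose $(x_1,y_1),\dots,(x_k,y_k)\in L_\Gamma$ whose images form an $\mathrm{End}_0(G)$-basis of $L_\Gamma/B_\Gamma$, so that $\delta(L/B)=\mathrm{tr.deg}(x,y/B)-gk$. Let $\mathcal{D}$ be the set of derivations in $\mathrm{Der}_{\tilde F}(\Delta)$ vanishing on $B$. By definition of $\mathrm{DD}_\Delta$ and $B = \mathrm{DD}_\Delta(B)$, the common constant field of $\mathcal{D}$ is exactly $B$. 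Each $\delta\in\mathcal{D}$ respects $\exp_G$ locally, so $(x,y)$ lies in $\Omega_{G,k}$. Apply Fact \ref{SchanuelGeneralization} with $N=k$.
\begin{itemize}
\item If $\mathrm{tr.deg}(x,y/B)-\mathrm{rkJac}_{\mathcal D}(x,y)\ge gk$, we need $\mathrm{rkJac}_{\mathcal D}(x,y)\ge 1$, which gives $\delta(L/B)\geq 1$. Since $L\neq B$, some coordinate of $x$ lies outside $B$. Otherwise every $x_i\in B^g$, and by the key fact $y_i\in B$, so $(x_i,y_i)\in B_\Gamma$, contradicting the choice of basis. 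A coordinate of $x$ outside $B = \mathrm{DD}_\Delta(B)$ is not killed by some $\delta\in\mathcal{D}$, so the rank is positive.
\item Otherwise, $(x,y)$ lies in a coset $\gamma\cdot TH$ with $H$ a proper algebraic subgroup of $G^k$ and $\gamma$ constant, that is, $\gamma\in TG^k(B)$. By Fact \ref{subgroupSatisfiesAdditiveRelations}, $H$ lies in a subgroup cut out by a nontrivial relation $\sum_j\lambda_j x_j = 0$. Hence $\sum_j\alpha_{\lambda_j}x_j$ equals a $B$-rational point, with the analogous statement on the $G$-side. Then $\sum_j \lambda_j(x_j,y_j)$ lies in $\tilde F_\Gamma$ with first coordinate in $B^g$, so by the key fact it lies in $B_\Gamma$. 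This contradicts $\mathrm{End}_0(G)$-independence modulo $B_\Gamma$.
\end{itemize}

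The main obstacle is the interface with the Ax–Schanuel fact. First, one must check that derivations in $\mathrm{Der}_{\tilde F}(\Delta)$ really put $(x,y)$ in the exponential differential solution set $\Omega_{G,k}$. This holds because $\exp_G$ near infinitesimal points is a translate of a $\mathbb{C}$-definable holomorphic germ at the origin, and the chain rule for $\delta$ follows from respecting such germs. Second, one needs the constant field to be exactly $B$, which uses that $\mathrm{DD}_\Delta$ is defined via derivations in $\mathrm{Der}_{\tilde F}(\Delta)$. Third, there is torsion bookkeeping in the extension definition: $B_\Gamma$ must contain all torsion of $L_\Gamma$, which holds because torsion points of $\tilde F_\Gamma$ have $x$-coordinates in $\mathbb{Q}^\mathrm{alg}$-span of periods rescaled, hence in $\mathbb{C}\subseteq B$.
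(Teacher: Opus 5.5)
Your strategy is the paper's. You show that $B=\mathrm{DD}_\Delta(C)$, with $B_\Gamma=\tilde F_\Gamma\cap TG(B)$, is $\Gamma$-closed by applying Kirby's Ax--Schanuel (Fact \ref{SchanuelGeneralization}) to an $\mathrm{End}_0(G)$-basis of $L_\Gamma/B_\Gamma$. You exclude the coset alternative by independence and get a nonzero Jacobian row from $\mathrm{DD}_\Delta$-closedness. You are more careful than the paper in several places. You use the correct bound $gk$ rather than $n$. Your observation that an element of $\tilde F_\Gamma$ whose first coordinate lies in $B^g$ already lies in $B_\Gamma$ is exactly what is needed, and the paper leaves it implicit. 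It serves twice: to get a coordinate of $x$ outside $B$ from $L\neq B$, and to turn the coset alternative into a contradiction.

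The one step that does not hold as stated is ``by definition of $\mathrm{DD}_\Delta$ and $B=\mathrm{DD}_\Delta(B)$, the common constant field of $\mathcal D$ is exactly $B$''. The paper's $\mathrm{DD}_\Delta$ has finite character built in. So $a\notin B$ only yields, for each finite $B_0\subseteq B$, some $\delta\in\mathrm{Der}_{\tilde F}(\Delta)$ with $\delta|_{B_0}=0$ and $\delta(a)\neq 0$. It does not formally yield a single $\delta$ vanishing on all of $B$: a directed intersection of subspaces, none of which lies inside a given hyperplane, can lie inside it.

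You need the inclusion of the constant field of $\mathcal D$ in $B$ twice: to get $\mathrm{rkJac}_{\mathcal D}(x,y)\geq 1$, and to get $\gamma\in TG^k(B)$. The fix is to localise. Pick a finite $B_0\subseteq B$ with $\mathrm{tr.deg}(x,y/B_0)=\mathrm{tr.deg}(x,y/B)$, and apply Fact \ref{SchanuelGeneralization} to $\mathcal D_0=\{\delta\in\mathrm{Der}_{\tilde F}(\Delta) : \delta|_{B_0}=0\}$. For finite $B_0$ the definition gives that the common constant field of $\mathcal D_0$ is exactly $\mathrm{DD}_\Delta(B_0)$, and $B_0\subseteq \mathrm{DD}_\Delta(B_0)\subseteq B$. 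Consequently:
\begin{itemize}
\item transcendence degree over this constant field equals $\mathrm{tr.deg}(x,y/B)$;
\item a coordinate of $x$ outside $B$ is moved by some $\delta\in\mathcal D_0$;
\item the constant point $\gamma$ is $B$-rational.
\end{itemize}
Both of your bullets then go through verbatim. The paper's proof skips this point too; it even takes the Jacobian over all of $\mathrm{Der}_{\tilde F}(\Delta)$.

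Finally, your torsion remark can be simplified. $\mathbb{G}_{\rm a}^g$ is torsion-free, so any torsion element of $TG$ has first coordinate $0$. Hence $\tilde F_\Gamma$, being a graph over infinitesimals, is torsion-free, and the torsion condition holds trivially.
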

\begin{proof}
    Suppose that $C$ is $\mathrm{DD}_\Delta$-closed. We consider $C$ as a $\Gamma$-field by taking $C_\Gamma$ to be ${\tilde{F}}_\Gamma \cap TG(C)$. Let $B$ be a proper exponentially finitely generated $\Gamma$-field extension of $C$ in $\tilde{F}$. Let $b = (b_1, \ldots, b_n) \in B_\Gamma^n$ be a basis for the extension over $C$ for some $n \in \mathbb{N}$ and let $V = \mathrm{Loc}(b/C)$ where $\mathrm{Loc}(b/C)$ denotes the smallest variety containing $b$ defined over $C$. Let $b_i = (b_{i, 1}, \mathrm{exp}(b_{i, 1}))$ for some $b_{i, 1} \in A$ for all $i \in \{1, \ldots, n\}$. We need only prove that $\delta(b/C) > 0$ which amounts to showing that $\mathrm{dim}V > n$. The $\mathrm{End}_0(G)$-linear independence of the $b_i$ implies that $b$ does not lie in any coset $\gamma \cdot TH$ where $H$ is a proper algebraic subgroup of $G^n$ and $\gamma$ is a point of $TH(C)$. Since $b$ satisfies the exponential differential equation of $G^n$ (see \cite[p.~3]{semiAbelianKirby}), applying Fact \ref{SchanuelGeneralization} gives \begin{align*}
    \mathrm{tr.deg}(b/C) - \mathrm{rkJac}_{\mathrm{Der}_{\tilde{F}}(\Delta)}(b) \geq n.
\end{align*} Since $B$ is a proper extension of $C$ and $C$ is $\mathrm{DD}_\Delta$-closed, it follows that there is a $\delta \in \mathrm{Der}_{\tilde{F}}(\Delta)$ such that $\delta(b_{i, 1}) \neq 0$ for some $i \in \{1, \ldots, n\}$. Thus, $\mathrm{rkJac}_{\mathrm{Der}_{\tilde{F}}(\Delta)}(b) \geq 1$ so that \begin{align*}
    \mathrm{tr.deg}(b/C) \geq n + 1 > n.
\end{align*} We deduce that $\mathrm{dim}V > n$.
\end{proof}

\begin{lemma}\label{algebraicAnalyticClosureRelatedToDefinableClosure}
    For any $C \subseteq \tilde{F}$, $\overline{\mathrm{hcl}}^{\tilde{F}}(C) \subseteq (\mathrm{dcl}(C \cup \mathbb{C}))^2$. 
\end{lemma}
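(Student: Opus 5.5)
We unwind the definition of $\overline{\mathrm{hcl}}^{\tilde{F}}$ and show that each element it produces is definable over $C \cup \mathbb{C}$ in $\tilde{\mathcal{R}}$, identifying $\tilde{F} = \tilde{\mathcal{R}}^2$. By finite character it suffices to treat a finite $C_0 \subseteq C$. So let $x \in \overline{\mathrm{hcl}}^{\tilde{F}}(C_0)$. Then there is a definable open $D \subseteq \tilde{F}^n$ such that $x \in \mathrm{hcl}^{\tilde{F}}(C_0 \cup \{a_1,\ldots,a_n\})$ for every $(a_1,\ldots,a_n) \in D$.

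First observe the base case. If $x \in \mathrm{hcl}^{\tilde{F}}(A)$ for a finite $A$, then $x = f(a)$ for some $\mathbb{C}$-definable $f$ and some tuple $a$ from $A$. Hence both real coordinates of $x$ lie in $\mathrm{dcl}(A \cup \mathbb{C})$, so $\mathrm{hcl}^{\tilde{F}}(A) \subseteq (\mathrm{dcl}(A\cup\mathbb{C}))^2$.

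The main task is to remove the auxiliary parameters $a_1,\ldots,a_n$. I would do this with the open set $D$ and the fact that $\mathrm{dcl}$ in the o-minimal structure $\tilde{\mathcal{R}}$ is a pregeometry with the exchange property.

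1. Note that $D$ is definable, but possibly with parameters outside $C_0 \cup \mathbb{C}$. I would replace these parameters by elements realising suitable generic types. The cleanest route is probably the following. Since $D$ is a nonempty open set, it contains a box. By saturation (or by working in an elementary extension, and noting that $\mathrm{dcl}$ is preserved), choose $(a_1,\ldots,a_n) \in D$ whose $2n$ real coordinates are $\mathrm{dcl}$-independent over $C_0 \cup \mathbb{C} \cup \{x\}$.
2. Choose a second point $(a'_1,\ldots,a'_n) \in D$ whose real coordinates are independent over $C_0 \cup \mathbb{C} \cup \{x\} \cup \{a\}$.
3. Since $x \in \mathrm{dcl}(C_0 \cup \mathbb{C} \cup a)$ and $x \in \mathrm{dcl}(C_0 \cup \mathbb{C} \cup a')$, exchange together with the independence of $a'$ from $a$ over $C_0 \cup \mathbb{C} \cup \{x\}$ should give
\[ x \in \mathrm{dcl}(C_0 \cup \mathbb{C}\cup a) \cap \mathrm{dcl}(C_0\cup \mathbb{C}\cup a') = \mathrm{dcl}(C_0 \cup \mathbb{C}). \]

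The intersection identity holds for independent sets in any pregeometry. Suppose $x \notin \mathrm{dcl}(C_0\cup\mathbb{C})$. Since $x \in \mathrm{dcl}(B\cup a)$ with $B = C_0 \cup \mathbb{C}$, the tuple $a$ is not independent over $B \cup \{x\}$, which contradicts the choice in step 1. That contradiction already suffices, so a single generic point $a$ is enough and step 2 is unnecessary. Applying this to each real coordinate of $x$ gives $x \in (\mathrm{dcl}(C\cup\mathbb{C}))^2$.

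The expected obstacle is justifying the existence of a point of $D$ that is generic over $C_0\cup\mathbb{C}\cup\{x\}$. The set $\mathbb{C}$ has size $2^{\aleph_0}$, so the saturation hypothesis $\kappa > 2^{\aleph_0}$ on $\mathcal{R}$ must transfer to $\tilde{\mathcal{R}}$. An ultrapower of a $\kappa$-saturated structure over $\mathbb{N}$ is again $\kappa$-saturated, or at least $\aleph_1$-saturated, and $\mathrm{dcl}$ does not change in elementary extensions. If saturation is insufficient, I would pass to a $|\mathbb{C}|^+$-saturated elementary extension of $\tilde{\mathcal{R}}$. There $D$, being open in an o-minimal structure, contains points of full dimension $2n$ over any small set, and membership of $x$ in $\mathrm{dcl}$ is absolute. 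Lemma~\ref{dclCardinalityLemma} guarantees that $\mathrm{dcl}(C_0\cup\mathbb{C}\cup\{x\})$ is small enough for this.
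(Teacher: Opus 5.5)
Your argument is the same as the paper's. You reduce to a finite $C_0$, take a point of $D$ whose $2n$ real coordinates are $\mathrm{dcl}$-independent over $C_0\cup\mathbb{C}\cup\{x\}$ (for one real coordinate $x$), and let exchange contradict $x\notin\mathrm{dcl}(C_0\cup\mathbb{C})$; your second generic point is indeed unnecessary.

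The only step with real content is the existence of that generic point. The paper passes over it with ``we may also assume'', and neither of your justifications works as stated.

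First, an ultrapower over $\mathbb{N}$ of a $\kappa$-saturated structure need not be $\kappa$-saturated. Take $c_0<c_1<\cdots$ in $\mathcal{R}$. The type $\{y>c_m : m\in\mathbb{N}\}\cup\{y<[(c_{f(i)})_i] : f\in\mathbb{N}^{\mathbb{N}},\ \{i : f(i)>m\}\in\mathcal{U}\text{ for all } m\}$ uses at most $2^{\aleph_0}$ parameters and is finitely satisfiable by a suitable constant $c_m$. It is omitted in $\tilde{\mathcal{R}}$: given a realisation $[(d_i)_i]$, put $f(i)=\max\{m\le i : c_m<d_i\}$ on the $\mathcal{U}$-large set where $d_i>c_0$, and this $f$ gives a contradiction. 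Nor does $\aleph_1$-saturation alone let you avoid the $2^{\aleph_0}$ elements of $\mathrm{dcl}(C_0\cup\mathbb{C}\cup\{x\})$.

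Second, passing to a $|\mathbb{C}|^+$-saturated elementary extension loses the hypothesis. The condition ``$x\in\mathrm{hcl}^{\tilde{F}}(C_0\cup\{a\})$ for every $a\in D$'' is an infinite disjunction over the $2^{\aleph_0}$ many $\mathbb{C}$-definable functions, not a first-order property of $D$. So it need not hold at new points of $D$. At a point generic over $C_0\cup\mathbb{C}\cup\{x\}$ it fails precisely when $x\notin\mathrm{dcl}(C_0\cup\mathbb{C})$, so invoking it there is circular. Absoluteness of $\mathrm{dcl}$ transfers the conclusion, not the hypothesis.

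The repair is a cardinality count inside $\tilde{\mathcal{R}}$ itself, with no saturation of the ultrapower needed. Since $\mathcal{R}$ is $\kappa$-saturated, it has size at least $\kappa$, and it embeds diagonally, so $|\tilde{\mathcal{R}}|\ge\kappa>2^{\aleph_0}$. Every nonempty open interval of $\tilde{\mathcal{R}}$ is in semialgebraic bijection with $\tilde{\mathcal{R}}$, so it too has more than $2^{\aleph_0}$ elements. By Lemma~\ref{dclCardinalityLemma}, the $\mathrm{dcl}$ of $C_0\cup\mathbb{C}\cup\{x\}$ together with finitely many further elements has at most $2^{\aleph_0}$ elements; this is the half you already had. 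So in an open box inside $D$ you can choose the $2n$ real coordinates one at a time, each outside the $\mathrm{dcl}$ of $C_0\cup\mathbb{C}\cup\{x\}$ and the coordinates already chosen. The resulting point lies in $D$ in $\tilde{\mathcal{R}}$, so the hypothesis applies to it, and your exchange argument then completes the proof.
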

\begin{proof}
    Let $C \subseteq \tilde{F}$. By definition of $\mathrm{hcl}$, it is clear that $\mathrm{hcl}(C) \subseteq (\mathrm{dcl}(C \cup \mathbb{C}))^{2}$. To show that $\overline{\mathrm{hcl}}^{\tilde{F}}(C) \subseteq (\mathrm{dcl}(C \cup \mathbb{C}))^2$, let $(a_1, a_2) \in \overline{\mathrm{hcl}}^{\tilde{F}}(C)$. By finite character of $\overline{\mathrm{hcl}}^{\tilde{F}}$, there exists $C_0 \subseteq C$ finite such that $(a_1, a_2) \in \overline{\mathrm{hcl}}^{\tilde{F}}(C_0)$. We want to show that $a_1, a_2 \in \mathrm{dcl}(C_0 \cup \mathbb{C})$. By definition of $\overline{\mathrm{hcl}}^{\tilde{F}}$, there exists a definable open set $D$ and $(x_1, \ldots, x_n) \in D$, with $x_i = (x_{i, 1}, x_{i, 2})$ for some $x_{i, 1}, x_{i, 2} \in \tilde{F}$ for all $i \in \{1, \ldots, n\}$, such that $(a_1, a_2) \in \mathrm{hcl}(C_0 \cup \{x_1, \ldots, x_n\})$. Hence, $(a_1, a_2) \in (\mathrm{dcl}(C_0 \cup \mathbb{C} \cup \{x_1, \ldots, x_n\}))^2$. We will only show that $a_1 \in \mathrm{dcl}(C_0 \cup \mathbb{C})$ as the case for $a_2$ is analogous. Assume that $a_1 \not\in \mathrm{dcl}(C_0 \cup \mathbb{C})$. We may also assume that $\mathrm{dim_\mathrm{dcl}}(x_{1, 1}, \ldots, x_{n, 2}/C_0 \cup \mathbb{C} \cup \{a_1\}) = 2n$. By exchange, we have $x_{i, j} \in \mathrm{dcl}(\{x_{1, 1}, \ldots, \hat{x_{i, j}}, \ldots, x_{n, 2}, a_1\} \cup C_0 \cup \mathbb{C})$ for some $i \in \{1, \ldots, n\}$ and $j \in \{1, 2\}$. This contradicts the $\mathrm{dcl}$-dimension of $\{x_1, \ldots, x_n\}$ over $\{a_1\} \cup C_0 \cup \mathbb{C}$. We conclude that $a_1 \in \mathrm{dcl}(C \cup \mathbb{C})$. 
\end{proof}

Putting together Theorem \ref{pregeometryDDALGAN}, Proposition \ref{gammaClosureRelatedToDerivations} and Lemma \ref{algebraicAnalyticClosureRelatedToDefinableClosure}, we have the following. 

\begin{corollary}\label{godTierCorollary}
    For any $C \subseteq \tilde{F}$, $\Gamma\mathrm{cl}^F(C) \subseteq (\mathrm{dcl}(C \cup \mathbb{C}))^2$.
\end{corollary}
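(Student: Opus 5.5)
The corollary follows by chaining the three preceding results, so the plan is simply to compose them in the right order. Fix $C \subseteq \tilde{F}$. We read the $\Gamma\mathrm{cl}^F$ in the statement as $\Gamma\mathrm{cl}^{\tilde{F}}$, the $\Gamma$-closure computed inside the $\Gamma$-field $\tilde{F}$, since that is the setting of Proposition \ref{gammaClosureRelatedToDerivations}. If one really wants $F$, one first notes that $F$ embeds in $\tilde{F}$ diagonally. One then checks that $\Gamma$-closedness in $\tilde{F}$ of a subfield of $F$ implies closedness in $F$, because an exponentially finitely generated extension inside $F$ is also one inside $\tilde{F}$. Hence $\Gamma\mathrm{cl}^F(C) \subseteq \Gamma\mathrm{cl}^{\tilde{F}}(C)$ for $C \subseteq F$, and this reduces the statement to the $\tilde{F}$ case.

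Step one: apply Proposition \ref{gammaClosureRelatedToDerivations} to get $\Gamma\mathrm{cl}^{\tilde{F}}(C) \subseteq \mathrm{DD}_\Delta(C)$. Step two: invoke Theorem \ref{pregeometryDDALGAN}, which identifies $\mathrm{DD}_\Delta(C)$ with $\overline{\mathrm{hcl}}^{\tilde{F}}(C)$. Step three: Lemma \ref{algebraicAnalyticClosureRelatedToDefinableClosure} gives $\overline{\mathrm{hcl}}^{\tilde{F}}(C) \subseteq (\mathrm{dcl}(C \cup \mathbb{C}))^2$. Composing the three inclusions gives the claim.

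The only point needing care is bookkeeping about ambient sets. Elements of $\tilde{F} = \tilde{\mathcal{R}}^2$ are pairs, and $\mathrm{dcl}$ is computed in $\tilde{\mathcal{R}}$. So we must read $\mathrm{dcl}(C \cup \mathbb{C})$ as the definable closure of the real and imaginary coordinates of $C$ together with $\mathbb{C}$. This is exactly the convention used in the proof of Lemma \ref{algebraicAnalyticClosureRelatedToDefinableClosure}, so the inclusions compose without further work.

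We also need Proposition \ref{gammaClosureRelatedToDerivations} in the form stated for arbitrary $C$, not just $\mathrm{DD}_\Delta$-closed $C$. To get this, set $C' = \mathrm{DD}_\Delta(C)$; this set is $\mathrm{DD}_\Delta$-closed by idempotence of the pregeometry. The proposition's proof shows $C'$, with its induced $\Gamma$-structure, is $\Gamma$-closed. Since $C \subseteq C'$, minimality in Definition \ref{gammaClosureDefinition} gives $\Gamma\mathrm{cl}^{\tilde{F}}(C) \subseteq C'$. This is the one step I would check carefully; the rest is formal.
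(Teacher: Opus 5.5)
Your proposal is correct and matches the paper's argument. The paper obtains the corollary simply by composing Proposition~\ref{gammaClosureRelatedToDerivations}, Theorem~\ref{pregeometryDDALGAN} and Lemma~\ref{algebraicAnalyticClosureRelatedToDefinableClosure}. Reading $\Gamma\mathrm{cl}^F$ as $\Gamma\mathrm{cl}^{\tilde F}$ is right, since that is how it is used in Lemma~\ref{keyLemma}. Passing from $\mathrm{DD}_\Delta$-closed sets to arbitrary $C$ via $\mathrm{DD}_\Delta(C)$ and minimality just makes explicit what the paper leaves implicit.

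The only loose point is your optional aside about $F$. The ``hence'' needs $\Gamma\mathrm{cl}^{\tilde F}(C)\subseteq F$ before your closedness transfer applies. You can get this from the $\tilde F$ case itself: for $C\subseteq F$, elementarity of the diagonal embedding $\mathcal{R}\preceq\tilde{\mathcal{R}}$ keeps $\mathrm{dcl}(C\cup\mathbb{C})$ inside $\mathcal{R}$.
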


The following result plays a key role in generalising the arguments of Boxall in \cite[Section 4]{boxall2024points}.

\begin{lemma}\label{keyLemma}
    Let $C \subseteq \Tilde{\mathbb{C}}$ be an algebraically closed subfield. Then $\Gamma\mathrm{cl}^{\Tilde{F}}(C) = C$. 
\end{lemma}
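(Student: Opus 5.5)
We need $\Gamma\mathrm{cl}^{\tilde F}(C)=C$ for an algebraically closed subfield $C\subseteq\tilde{\mathbb C}$, presumably meaning $C$ consists of standard complex numbers (the image of $\mathbb C$ in the ultrapower). Since $\Gamma\mathrm{cl}$ is extensive, it suffices to prove $\Gamma\mathrm{cl}^{\tilde F}(C)\subseteq C$. The plan is to exhibit $C$, equipped with the $\Gamma$-structure $C_\Gamma := \tilde F_\Gamma\cap TG(C)$, as a $\Gamma$-closed $\Gamma$-subfield of $\tilde F$. Then $C$ is among the fields intersected in Definition \ref{gammaClosureDefinition}, and the inclusion follows.

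First, check that $C$ is a $\Gamma$-subfield. $C_\Gamma$ is an $\mathrm{End}(G)$-submodule, since $\mathrm{End}(G)$ acts by maps defined over $\mathbb{Q}^{\mathrm{alg}}\subseteq C$. It is divisible because $C$ is algebraically closed: if $n(x,\exp_G(x))\in C_\Gamma$ with $x$ infinitesimal, then $\exp_G(x/n)$ is an $n$-division point of a $C$-point, hence algebraic over $C$. Moreover $C$ contains the coordinates of $C_\Gamma$ trivially. In fact, since the elements of $C$ are standard and the only standard infinitesimal is $0$, $C_\Gamma$ should reduce to $\{(0,0_G)\}$ (or to the torsion part, depending on conventions). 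I would pin this down first, because it drastically simplifies the computation of $\delta$.

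Next, prove $\Gamma$-closedness. Let $C\subseteq B\subseteq\tilde F$ with $B$ exponentially finitely generated over $C$ and $B\neq C$. Choose $b_1,\dots,b_n\in B_\Gamma$, $b_i=(b_{i,1},\exp_G(b_{i,1}))$, whose images form an $\mathrm{End}_0(G)$-basis of $B_\Gamma/C_\Gamma$. We need $\mathrm{tr.deg}(b/C)>gn$. I would argue exactly as in Proposition \ref{gammaClosureRelatedToDerivations}, applying Fact \ref{SchanuelGeneralization} with $\mathcal D=\mathrm{Der}_{\tilde F}(\Delta)$. The key input is that $C\subseteq\mathbb C$ is fixed by the relevant derivations: every element of $\mathbb C$ lies in $\mathrm{DD}_\Delta(\varnothing)$, because constant functions are $\mathbb C$-definable holomorphic, so $\mathrm{hcl}(\varnothing)\supseteq\mathbb C$ and hence every $\delta\in\mathrm{Der}_{\tilde F}(\Delta)$ kills $C$. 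By linear independence modulo $C_\Gamma$, the point $b$ lies in no coset $\gamma\cdot TH$ with $\gamma$ constant. Here one must use that the constants $\gamma$ with coordinates in the constant field, not just in $C$, are also excluded. I would handle this by noting that any such relation descends to an $\mathrm{End}_0(G)$-dependence modulo $\tilde F_\Gamma\cap TG(\text{constants})$, and I would replace $C$ by $\mathrm{DD}_\Delta(C)$ in the Ax–Schanuel step. Ax–Schanuel then gives $\mathrm{tr.deg}(b/\text{const})-\mathrm{rkJac}(b)\geq gn$. Finally, $\mathrm{rkJac}(b)\geq1$ because some $b_{i,1}\notin\mathrm{DD}_\Delta(C)$.

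The main obstacle is precisely that last claim, together with passing from transcendence over the constant field to transcendence over $C$. Here I would use Corollary \ref{godTierCorollary} together with Lemma \ref{dclCardinalityLemma}. Since $\mathrm{dcl}(C\cup\mathbb C)=\mathrm{dcl}(\mathbb C)$ is essentially $\mathbb C$ (standard points, using that $\mathbb C$ is an elementary substructure and $\tilde{\mathcal R}$ is an ultrapower), $\mathrm{DD}_\Delta(C)\cap\tilde F\subseteq\mathbb C^2$. A nonzero infinitesimal $b_{i,1}$ is nonstandard, hence lies outside this closure, so some derivation moves it. The remaining point is to show that $\Gamma\mathrm{cl}^{\tilde F}(C)$ adds no new elements of $\mathbb C\setminus C$. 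This follows because $\Gamma\mathrm{cl}(C)$ is generated over $C$ by field-theoretic closure of coordinates of elements of $\tilde F_\Gamma$. A standard element that is algebraic over $C(b,\exp_G b)$ with $b$ nonstandard and $\delta\le0$ would contradict the above transcendence bound. So any proper extension has $\delta>0$, $C\triangleleft_{\mathrm{cl}}\tilde F$, and $\Gamma\mathrm{cl}^{\tilde F}(C)=C$.
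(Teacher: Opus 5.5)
There is a genuine gap, and it comes from how you read $\tilde{\mathbb{C}}$. In the paper, $\tilde{\mathbb{C}}$ is the canonical extension of $\mathbb{C}$ into the ultrapower $\tilde{\mathcal{R}}$, just as $\tilde{\mathbb{N}}$ contains the infinite $\tilde{n}$. So it contains nonstandard elements, and the lemma is needed for exactly such fields. In Case 1 of Theorem \ref{mainResult} it is applied to $C=\mathrm{acl}(K,u)$, where $u\subseteq\tilde{\mathbb{C}}$ comes from the nonstandard points $\tilde{x}$ and $\tilde{n}\tilde{x}$, to pass from $\tilde{n}\in\Gamma\mathrm{cl}(K,u)$ to $\tilde{n}\in\mathrm{acl}(K,u)$.

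For such $C$ the key steps of your argument fail:
\begin{itemize}
\item A nonstandard $c\in\tilde{\mathbb{C}}$ does not lie in $\mathrm{DD}_\Delta(\varnothing)=\overline{\mathrm{hcl}}(\varnothing)\subseteq(\mathrm{dcl}(\mathbb{C}))^2=\mathbb{C}$, so some $\delta\in\mathrm{Der}_{\tilde F}(\Delta)$ moves it. Hence $C$ is not contained in the constant field of $\mathrm{Der}_{\tilde F}(\Delta)$, and the inequality $\mathrm{tr.deg}(b/C)\geq\mathrm{tr.deg}(b/\text{constants})$, which you got for free from $C\subseteq\mathbb{C}$, is lost.
\item $\mathrm{dcl}(C\cup\mathbb{C})$ is no longer essentially $\mathbb{C}$.
\item ``Nonstandard, hence outside the closure'' no longer justifies $\mathrm{rkJac}(b)\geq 1$, because $C$ is itself nonstandard.
\end{itemize}
As written, you have proved only the case $C\subseteq\mathbb{C}$, which does not suffice for the application. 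Your last paragraph is also confused: once $C$ is shown to be $\Gamma$-closed you are done, and nothing further needs checking about elements of $\mathbb{C}\setminus C$.

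The missing ingredient is the one the paper uses: $\tilde{\mathcal{I}}\cap\tilde{\mathbb{C}}=\{0\}$. Combine it with $\mathrm{dcl}(C\cup\mathbb{C})\subseteq\tilde{\mathbb{C}}$ for $C\subseteq\tilde{\mathbb{C}}$, which holds because the ultrapower of the standard model is an elementary substructure of $\tilde{\mathcal{R}}$ by \L{}o\'s.

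With these facts, and with ``standard'' replaced by ``in $\tilde{\mathbb{C}}$'' throughout, your direct route can be salvaged. Run Ax--Schanuel relative to $C$, using derivations that vanish on $C$ and whose common constant field lies in $\overline{\mathrm{hcl}}(C)=\mathrm{DD}_\Delta(C)\subseteq\tilde{\mathbb{C}}$, in the manner of Proposition \ref{gammaClosureRelatedToDerivations}. A coset relation then produces an $\mathrm{End}(G)$-combination of the $b_i$ whose infinitesimal first coordinate lies in $\tilde{\mathbb{C}}$, hence is $0$. Also $\mathrm{rkJac}(b)\geq 1$, since a nonzero element of $\tilde{\mathcal{I}}$ is not in $\tilde{\mathbb{C}}$. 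Finally, $\mathrm{tr.deg}(b/C)\geq\mathrm{tr.deg}(b/\mathrm{DD}_\Delta(C))$ because $C\subseteq\mathrm{DD}_\Delta(C)$.

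The paper avoids re-running Ax--Schanuel. It first shows $\Gamma\mathrm{cl}(C)=\langle C\cup\Gamma\mathrm{cl}(C)_\Gamma,\Gamma\mathrm{cl}(C)_\Gamma\rangle$. Corollary \ref{godTierCorollary} then places that $\Gamma$-part inside $\tilde{F}_\Gamma\cap TG(\tilde{\mathbb{C}})$, which is trivial because $\tilde{\mathcal{I}}\cap\tilde{\mathbb{C}}=\{0\}$. Hence $\Gamma\mathrm{cl}(C)=C$.
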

\begin{proof}
    We first show that $\langle C \cup \Gamma\mathrm{cl}^{\Tilde{F}}(C)_\Gamma, \Gamma\mathrm{cl}^{\Tilde{F}}(C)_\Gamma \rangle_{\Tilde{F}} = \Gamma\mathrm{cl}^{\Tilde{F}}(C)$ as $\Gamma$-fields. Since $C \subseteq \Tilde{\mathbb{C}}$, we may consider it as a $\Gamma$-subfield of $\Tilde{F}$ by setting $C_\Gamma = \{(0, 0_E)\}$. Now, using $K$ to denote $ \langle C \cup \Gamma\mathrm{cl}^{\Tilde{F}}(C)_\Gamma, \Gamma\mathrm{cl}^{\Tilde{F}}(C)_\Gamma \rangle_{\Tilde{F}}$, it follows that $K$ is a $\Gamma$-subfield of $\Tilde{F}$. By Definition \ref{gammaClosureDefinition}, we need only show that $K$ is $\Gamma$-closed. Let $B$ be an exponentially finitely generated $\Gamma$-field over $K$, with basis $b \in B_\Gamma^n$ for some $n \in \mathbb{N}$, such that $\delta(B/K) \leq 0$. Since $\Gamma\mathrm{cl}^{\tilde{F}}(C)_\Gamma = K_\Gamma$, it follows that \begin{align*}
        \mathrm{tr.deg}(b/K) - \mathrm{ldim}_{\mathrm{End}_0(G)}(b, \Gamma\mathrm{cl}^{\Tilde{F}}(C)_\Gamma) &= \mathrm{tr.deg}(b/K) - \mathrm{ldim}_{\mathrm{End}_0(G)}(b, K_\Gamma) = \delta(b/K) \leq 0
    \end{align*} which implies that \begin{align*}
        \mathrm{ldim}_{\mathrm{End}_0(G)}(b, \Gamma\mathrm{cl}^{\Tilde{F}}(C)_\Gamma) \geq \mathrm{tr.deg}(b/K) \geq \mathrm{tr.deg}(b/\Gamma\mathrm{cl}^{\Tilde{F}}(C)).
    \end{align*} Since $\Gamma\mathrm{cl}^{\Tilde{F}}(C)$ is $\Gamma$-closed, this implies that $b \in \Gamma\mathrm{cl}^{\Tilde{F}}(C)_\Gamma^N$ so that $K$ is $\Gamma$-closed. We deduce that $K = \Gamma\mathrm{cl}^{\Tilde{F}}(C)$.
    Combining $K = \Gamma\mathrm{cl}^{\Tilde{F}}(C)$ with, by Corollary \ref{godTierCorollary}, $\Gamma\mathrm{cl}^{\Tilde{F}}(C) \subseteq (\mathrm{dcl}(C \cup \mathbb{C}))^2$ and that $\Tilde{\mathcal{I}} \cap \Tilde{\mathbb{C}} = \{0\}$, we must have that $C = \Gamma\mathrm{cl}^{\Tilde{F}}(C)$.
\end{proof}

\section{Main result}\label{finitenessResultChapter}

To conclude proving Theorem \ref{OGEllipticConjecture}, it suffices, by Proposition \ref{thisIsIt}, to prove the following. 
We follow the same argument of \cite[Theorem 4.4, p.~12]{boxall2024points} with some minor changes reflected in our setting. We also further assume $\mathcal{R}$ to be a strongly $\kappa$-homogeneous and $\kappa$-saturated extension for some cardinal $\kappa > 2^{\aleph_0}$.  
\begin{theorem}\label{mainResult}
    Let $C_1, C_2 \subseteq G^3(\mathbb{C})$ be irreducible closed algebraic curves such that at least one of the curves is not defined over $\mathbb{Q}^\mathrm{alg}$. Suppose that there does not exist a proper algebraic subgroup $H \subseteq G^3(\mathbb{C})$ such that $C_1 \subseteq H$. Let $\mathcal{N} = \{n \in \mathbb{N} \ | \ [n]C_1 \subseteq C_2\}$. Then $\bigcup_{n \in \mathbb{N}\setminus\mathcal{N}}\{x \in C_1 \ | \ nx \in C_2\}$ is finite.
\end{theorem}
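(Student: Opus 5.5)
We argue by contradiction, following the structure of Boxall's proof of \cite[Theorem 4.4]{boxall2024points}. Suppose the set in the statement is infinite. By Lemma \ref{infiniteIndexes}, there are infinitely many $n_k \in \mathbb{N}\setminus\mathcal{N}$, tending to infinity, and points $x_k \in C_1$ with $n_k x_k \in C_2$. We may also take the $x_k$ pairwise distinct, and none of them is torsion except for finitely many (Corollary \ref{torsionCorollary}, together with the hypothesis that $C_1$ lies in no proper algebraic subgroup). These sequences define points of the ultrapower: $\tilde n = [(n_k)] \in \tilde{\mathbb{N}}\setminus\mathbb{N}$, $\tilde x = [(x_k)] \in C_1(\tilde{\mathbb{C}})$, and $\tilde y = \tilde n \tilde x \in C_2(\tilde{\mathbb{C}})$.

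Next we lift to logarithms. Write $x_k = \mathrm{exp}_G(u_k)$ with $u_k$ in the fundamental domain $\mathbb{F}_G$, and write $y_k = n_k x_k = \mathrm{exp}_G(v_k)$ with $v_k \in \mathbb{F}_G$. Then $n_k u_k - v_k$ lies in the period lattice $\Lambda^3$. Passing to the ultrapower and dividing by $\tilde n$ gives
\[\tilde u = \tilde n^{-1}\tilde v + \tilde n^{-1}\tilde\omega, \qquad \tilde\omega \in \tilde\Lambda^3 .\]
The idea is to rescale so that the arguments of $\mathrm{exp}_G$ become infinitesimal. This puts the relevant points into $\tilde F_\Gamma$, which is the graph of $\mathrm{exp}_G$ on $\tilde{\mathcal{I}}$. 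We then apply Theorem \ref{BKWResultIntro} with $\lambda = \tilde n$ (or $\tilde n^{-1}$) and with $K$ the algebraic closure of a field generated by $\mathbb{C}$ together with the finitely many standard-part and period data. The equality $\Gamma\mathrm{cl}^{\tilde F}(K) = K$ is exactly Lemma \ref{keyLemma}. Concretely, I would take the $a$ in Theorem \ref{BKWResultIntro} to be infinitesimal differences $\tilde u - u^\ast$ of length $n = 3$, with $\lambda a$ the corresponding differences for $\tilde v$. The three coordinates of $\mathrm{exp}_G(a)$ are additively independent because $C_1$ is not contained in a proper algebraic subgroup (Fact \ref{subgroupSatisfiesAdditiveRelations}), so an $\mathrm{End}(G)$-relation among them would force $\tilde x$, and hence infinitely many $x_k$, into a proper coset.

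It remains to check $\lambda = \tilde n \notin K$. Since $\tilde n$ is infinite, it lies outside $\mathbb{C}$, and via Lemma \ref{dclCardinalityLemma} and saturation we can choose $K$ small enough to exclude it. Theorem \ref{BKWResultIntro} then gives
\[\mathrm{tr.deg}(\mathrm{exp}_G(a),\mathrm{exp}_G(\lambda a)/K,\lambda) \geq 3g .\]

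On the other hand, $\mathrm{exp}_G(a)$ and $\mathrm{exp}_G(\lambda a)$ are algebraic over $K(\tilde x,\tilde y)$, after translating by the standard parts. Since $\tilde x \in C_1$ and $\tilde y \in C_2$ lie on curves, we get $\mathrm{tr.deg}(\tilde x,\tilde y/K) \leq 2$ as long as $K$ contains the fields of definition of $C_1$ and $C_2$. For $g \geq 1$ and three coordinates, $2 < 3g$, which is the contradiction we want.

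The main obstacle is the choice of $K$. It must contain the field of definition of whichever curve is not defined over $\mathbb{Q}^\mathrm{alg}$, must contain $\mathbb{C}$ (or enough of it) so that Lemma \ref{keyLemma} applies, and must still exclude $\tilde n$. There is also an unresolved tension in the rescaling: $\lambda a$ may fail to be infinitesimal, because the period contribution $\tilde n^{-1}\tilde\omega$ need not be controlled. Here I would follow Boxall's idea and use the nonstandard lattice point $\tilde\omega$ together with strong homogeneity of $\mathcal{R}$. The plan is to move $\tilde n$ by an automorphism fixing $\mathbb{C}$, and to use the hypothesis $n_k \notin \mathcal{N}$ to rule out the degenerate case where $\tilde y$ is forced to lie on $[\tilde n]C_1$ generically. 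I expect this step, where transcendence of one curve over $\mathbb{Q}^\mathrm{alg}$ is genuinely used, to need the most care.
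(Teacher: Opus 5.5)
There is a genuine gap, and it sits exactly at the ``unresolved tension'' you flag. It cannot be repaired by a cleverer choice of $K$ or $\lambda$. Your $a=\tilde u-u^\ast$ is infinitesimal only in the ultrapower sense: it is a class of sequences of standard complex numbers, so its coordinates lie in $\tilde{\mathbb{C}}$. But $\tilde F_\Gamma$ is the graph of $\mathrm{exp}_G$ on $\tilde{\mathcal{I}}$, the ultrapower of the infinitesimals of the saturated $\mathcal{R}$. Since $\tilde{\mathcal{I}}\cap\tilde{\mathbb{C}}=\{0\}$, you have $a\notin\pi_1(\tilde F_\Gamma)$ unless $a=0$. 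You also cannot enlarge the $\Gamma$-structure to admit such elements, because Lemma \ref{keyLemma}, which you need for $\Gamma\mathrm{cl}^{\tilde F}(K)=K$, rests precisely on $\tilde{\mathcal{I}}\cap\tilde{\mathbb{C}}=\{0\}$. Independently, $\tilde n a=\tilde n\tilde u-\tilde n u^\ast$ is not $\tilde v-v^\ast$ and is typically not small at all. Moreover $\mathrm{exp}_G(\tilde n a)=\tilde y-\tilde n x^\ast$ involves the nonstandard point $\tilde n x^\ast\notin K$, so your upper bound fails too. A telling symptom is that your argument never actually uses that one curve is not defined over $\mathbb{Q}^\mathrm{alg}$. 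If it worked it would prove the algebraic case as well, whereas this method draws all of its transcendence from that hypothesis. Moving $\tilde n$ by an automorphism fixing $\mathbb{C}$ does not help: what must be moved is the curves.

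The missing idea is the paper's infinitesimal shift.
\begin{itemize}
\item Write $C_1,C_2$ over $K=\mathrm{acl}(e_1,\ldots,e_k)$. Use saturation and strong homogeneity of $\mathcal{R}$ to get an automorphism $\phi$ fixing $\mathbb{Q}^\mathrm{alg}$ with $e_i'=\phi(e_i)$ infinitesimally close to $e_i$ and $\mathrm{dcl}$-independent over $\mathbb{C}$. Put $C_j'=\phi(C_j)$ and $K'=\phi(K)$.
\item By realising a type, each standard solution $(n_k,x_k)$ with logarithm $a_k$ has a companion $z_k\in C_1'$ with $n_kz_k\in C_2'$ for the same $n_k$, and a logarithm $b_k$ with $b_k-a_k\in\mathcal{I}^3$.
\item Since $n_k$ is a standard integer, also $n_k(b_k-a_k)\in\mathcal{I}^3$. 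Hence in the ultrapower both $\tilde b-\tilde a$ and $\tilde n(\tilde b-\tilde a)$ lie in $\tilde{\mathcal{I}}^3$. The two layers of nonstandardness are what resolves your tension.
\item Theorem \ref{additiveIndependenceTheorem} is then applied with $\lambda=\tilde n$ to $\tilde z-\tilde x\in C_1'-\tilde x$ and $\tilde n\tilde z-\tilde n\tilde x\in C_2'-\tilde n\tilde x$. The base is $\Gamma\mathrm{cl}(K,K',u)$, where $u$ is an $\mathrm{acl}$-basis over $K\cup\{\tilde n\}$ of generators of the field of definition of these translated curves.
\item This forces $\tilde n\in\Gamma\mathrm{cl}(K,K',u)$. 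The $\Gamma\mathrm{cl}$-independence of the $e_i'$ over $\tilde{\mathbb{C}}$ removes $K'$, and Lemma \ref{keyLemma} then gives $\tilde n\in\mathrm{acl}(K,u)$, contradicting the choice of $u$.
\end{itemize}
So $K$ is not chosen to exclude $\tilde n$; membership of $\tilde n$ is exactly what gets refuted.

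Finally, additive independence is not automatic. A relation among the coordinates of $\tilde x-x^\ast$ only puts $C_1$ in a translate of a proper subgroup, which your hypothesis does not rule out. The paper treats additive dimension $3g$, $2g$, $g$ and $0$ of $\tilde z-\tilde x$ separately. The case $\tilde z=\tilde x$ forces $C_j=C_j'$ and hence definability over $K\cap K'=\mathbb{Q}^\mathrm{alg}$; that is exactly where the non-algebraicity hypothesis enters.
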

\begin{proof}
    Assume that $\bigcup_{n \in \mathbb{N}\setminus\mathcal{N}}\{x \in C_1 \ | \ nx \in C_2\}$ is infinite. We will argue for a contradiction. Let $e_1, \ldots, e_k \in \mathbb{C}$ be algebraically independent such that $C_1$ and $C_2$ are defined over $K = \mathrm{acl}(e_1, \ldots, e_k)$. Denoting $e_i = (e_{i, 1}, e_{i, 2})$ where $e_{i, 1}, e_{i, 2} \in \mathcal{R}$ for all $i \in \{1, \ldots, k\}$, we may assume that $e_{1,1}, e_{1,2} \ldots, e_{k,1}, e_{k,2}$ are $\mathrm{dcl}$-independent in the sense of the underlying real-closed field. We will break the argument up into various steps.

    \underline{Step 1}: We construct a field automorphism $\phi: F \rightarrow F$ that fixes $\mathbb{Q}^\mathrm{alg}$ where $\phi(e_1), \ldots, \phi(e_k)$ are $\mathrm{dcl}$-independent over $\mathbb{C}$, as a $2k$-tuple in the sense of the full o-minimal structure, and $\phi(e_i) - e_i \in \mathcal{I}$ for all $i \in \{1, \ldots, k\}$. 
    
    First, let $p_{1, 1}(x)$ be the partial type that expresses $x$ is $\mathrm{dcl}$-independent over $\mathbb{C}$, that $|x - e_{1, 1}| < \frac{1}{n}$ for all $n \in \mathbb{N}$ and that $x$ satisfies all formulae  $\varphi(x)$ with parameters in $\mathbb{Q}^\mathrm{rc}$ in the underlying real-closed field such that $\mathcal{R} \models \varphi(e_{1, 1})$. As $|\mathrm{dcl}(\mathbb{C})| \leq 2^{\aleph_0}$ by Lemma \ref{dclCardinalityLemma} and $\mathcal{R}$ is $\kappa$-saturated, we deduce that there exists $c_{1, 1} \in \mathcal{R}$ which satisfies the partial type $p_{1, 1}(x)$. We then construct the partial type $p_{1, 2}(x)$ which expresses that $x$ is $\mathrm{dcl}$-independent over $\mathbb{C} \cup \{c_{1, 1}\}$, that $|x - e_{1, 2}| < \frac{1}{n}$ for all $n \in \mathbb{N}$ and that $x$ satisfies all formulae $\varphi(c_{1,1}, x)$ with parameters in $\mathbb{Q}^\mathrm{rc}$ in the underlying real-closed field such that $\mathcal{R} \models \varphi(e_{1,1}, e_{1, 2})$. Similarly to above, as $|\mathrm{dcl}(\mathbb{C} \cup \{c_{1, 1}\})| \leq 2^{\aleph_0}$, again, by Lemma \ref{dclCardinalityLemma} and $\mathcal{R}$ is $\kappa$-saturated there exists $c_{1, 2} \in \mathcal{R}$ which satisfies the partial type $p_{1, 2}(x)$. We proceed in this manner, by induction, so that there exists $c_{1, 1}, c_{1, 2}, \ldots, c_{k, 2} \in \mathcal{R}$ such that $\mathcal{R} \models p_{i, j}(c_{i, j})$ for all $i \in \{1, \ldots, k\}$ and $j \in \{1,2\}$. Furthermore, by our construction of $p_{i, j}$, $c_{i, j}$ has the same type as $e_{i, j}$, in the sense of the underlying real-closed field, for all $i \in \{1, \ldots, k\}$ and $j \in \{1,2\}$. As $\mathcal{R}$ is strongly $\kappa$-homogeneous, it follows that there exists an isomorphism $\psi: \mathcal{R} \rightarrow \mathcal{R}$, in the sense of real-closed fields, such that $\psi(e_{i, j}) = c_{i, j}$ for all $i \in \{1, \ldots, k\}$ and $j \in \{1,2\}$ and such that $\psi$ fixes $\mathbb{Q}^\mathrm{rc}$. Defining $\phi: F \rightarrow F$ by sending $a = (a_1, a_2) \in F$ to $(\psi(a_1), \psi(a_2))$ gives our wanted isomorphism.     
    
    Step 1 allows us to construct infinitesimal shifts of $C_1$ and $C_2$. Let $K' = \phi(K)$, $C_1' = \phi(C_1(F))$, $C_2' = \phi(C_2(F))$ and $e_i' = \phi(e_i)$ for all $i \in \{1, \ldots, k\}$. As $C_1, C_2 \subseteq G^3(F)$, $\phi$ fixes $\mathbb{Q}^\mathrm{alg}$ and $G$ is assumed to be defined over $\mathbb{Q}^\mathrm{alg}$, we have that $C_1', C_2' \subseteq G^3(F)$. Note that, since $e_1', \ldots, e_k'$ are $\mathrm{dcl}$-independent over $\mathbb{C}$, it follows that they are $\mathrm{dcl}$-independent over $\Tilde{\mathbb{C}}$. 
    
    \underline{Step 2}: Given any $n \in \mathbb{N}\setminus\mathcal{N}$, $x = (x_1, x_2, x_3) \in C_1$ and $a = (a_1, a_2, a_3) \in \mathbb{C}^3$ such that $nx \in C_2$ and $\mathrm{exp}_G(a) = x$, we want to construct some $z = (z_1, z_2, z_3) \in C_1'(F)$ and $b = (b_1, b_2, b_3) \in F^3$ such that $nz \in C_2'(F)$, $a_i - b_i \in \mathcal{I}$ and $\mathrm{exp}_G(b_i) = z_i$ for all $i \in \{1, 2, 3\}$. 
    
    As a convention, we set $\mathrm{exp}_G(0) = 0_G$. 
    If $x_i = 0_G$, then we set $a_i = 0$, $b_i = 0$ and $z_i = 0_G$ for all $i \in \{1, 2, 3\}$. We form the partial type which has the formulas that say that, for each real coordinate of non-zero $a_1, a_2, a_3$, you specify all rational numbers that the real coordinate lies between and that $\mathrm{exp}_G(a) \in C_1$ and $\mathrm{exp}_G(na) \in C_2$. Since $e_1, \ldots, e_k$ has the same type as $e_1', \ldots, e_k'$, using the partial type above, there exists $z = (z_1, z_2, z_3) \in C_1'$ and $b = (b_1, b_2, b_3) \in F^3$ such that $nz \in C_2'$, $a_i - b_i \in \mathcal{I}$ and $\mathrm{exp}_G(b_i) = z_i$ for all $i \in \{1, 2, 3\}$. 

    \underline{Step 3}: Using step 2, we construct elements in $C_1(\Tilde{\mathbb{C}})$ and in $C_1'(\Tilde{F})$ that we use to obtain a contradiction.
    
     Since $\bigcup_{n \in \mathbb{N}\setminus\mathcal{N}}\{x \in C_1 \ | \ nx \in C_2\}$ is infinite, by Lemma \ref{infiniteIndexes}, there exists an increasing sequence of natural numbers $(n_i)_{i \in \mathbb{N}}$ such that \begin{itemize}
        \item there exists $x_i = (x_{i, 1}, x_{i, 2}, x_{i,3}) \in C_1(\mathbb{C})$ with $n_ix_i \in C_2(\mathbb{C})$ and
        \item there exists $a_i = (a_{i, 1}, a_{i, 2}, a_{i, 3}) \in \mathbb{C}^3$ with $\mathrm{exp}_G(a_i) = x_i$ and $\mathrm{exp}_G(n_ia_i) = n_ix_i$.
    \end{itemize} Furthermore, by step 2, \begin{itemize}
        \item there exists $z_i = (z_{i, 1}, z_{i, 2}, z_{i,3}) \in C_1'(F)$ with $n_iz_i \in C_2'(F)$ and
        \item there exists $b_i = (b_{i, 1}, b_{i, 2}, b_{i, 3}) \in F^3$ with $\mathrm{exp}_G(b_i) = z_i$ and $\mathrm{exp}_G(n_ib_i) = n_iz_i$ such that $a_i - b_i \in \mathcal{I}^3$.
    \end{itemize} We can now define, in $\Tilde{\mathcal{R}}$, \begin{itemize}
        \item $\Tilde{n} := [(n_i)_{i \in \mathbb{N}}]$,
        \item $\Tilde{x} := [(x_i)_{i \in \mathbb{N}}]$,
        \item $\Tilde{a} := [(a_i)_{i \in \mathbb{N}}]$,
        \item $\Tilde{z} := [(z_i)_{i \in \mathbb{N}}]$ and
        \item $\Tilde{b} := [(a_i)_{i \in \mathbb{N}}]$.
    \end{itemize} It follows, by the properties of ultrapowers, that $\Tilde{x} \in C_1(\Tilde{\mathbb{C}})$ and $\Tilde{z} \in C_1'(\Tilde{F})$ such that $\Tilde{n}\Tilde{x} \in C_2(\Tilde{\mathbb{C}})$, $\Tilde{n}\Tilde{z} \in C_2'(\Tilde{F})$, $\mathrm{exp}_G(\Tilde{a}) = \Tilde{x}$, $\mathrm{exp}_G(\Tilde{b}) = \Tilde{z}$ and $\Tilde{a} - \Tilde{b} \in \Tilde{\mathcal{I}}^3$.    

    \underline{Step 4}: Use properties of $\Tilde{z}$ and $\Tilde{x}$ to obtain a contradiction. Consider the point $\Tilde{z} - \Tilde{x}$ which lies on the curve $C_1'(\Tilde{F}) - \Tilde{x}$ and the point $\Tilde{n}\Tilde{z} - \Tilde{n}\Tilde{x}$ which lies on the curve $C_2'(\Tilde{F}) - \Tilde{n}\Tilde{x}$. It follows that both curves are defined over a finitely generated field $L$ over $K'$ given by generators $l_1, \ldots, l_t \in \tilde{\mathbb{C}}$ for some $t \in \mathbb{N}$ (see \cite[Proposition 2.6]{Silverman:1338326} for example). 
    We now give various cases to obtain a contradiction.

    \underline{Case 1}: Suppose that $\Tilde{z} - \Tilde{x}$ is additively independent. 
    Let $u$ be a maximal sub-tuple of the tuple $(l_1, \ldots, l_t)$ such that $u$ is $\mathrm{acl}$-independent over $K \cup \{\tilde{n}\}$. 
    As $C_1'(\Tilde{F}) - \Tilde{x}$ is a curve defined over $L$, it follows that \begin{align*}
        \mathrm{tr.deg}(\Tilde{z} - \Tilde{x}/L) \leq 1.
    \end{align*} Similarly, we have \begin{align*}
        \mathrm{tr.deg}(\Tilde{n}\Tilde{z} - \Tilde{n}\Tilde{x}/L) \leq 1.
    \end{align*} Since $L \subseteq \mathrm{acl}(\Gamma\mathrm{cl}(K', K, u), \Tilde{n})$, it follows that \begin{align*}
        \mathrm{tr.deg}(\Tilde{z} - \Tilde{x}, \Tilde{n}\Tilde{z} - \Tilde{n}\Tilde{x}/\Gamma\mathrm{cl}(K', K, u), \Tilde{n}) \leq 2.
    \end{align*} By the manner in which we defined $\Tilde{z}$ and $\Tilde{n}\Tilde{z}$ in step 3, we have that $\Tilde{z} - \Tilde{x} = \mathrm{exp}_G(\Tilde{b} - \Tilde{a})$ and $\Tilde{n}\Tilde{z} - \Tilde{n}\Tilde{x} = \mathrm{exp}_G(\Tilde{n}\Tilde{b} - \Tilde{n}\Tilde{a}) = \mathrm{exp}_G(\Tilde{n}(\Tilde{b} - \Tilde{a}))$ where $\Tilde{b} - \Tilde{a}, \ \Tilde{n}(\Tilde{b} - \Tilde{a}) \in \Tilde{\mathcal{I}}^3$. Thus, by Theorem \ref{additiveIndependenceTheorem}, we must have $\Tilde{n} \in \Gamma\mathrm{cl}(K', K, u)$. Since $e_1', \ldots, e_k'$ are $\mathrm{dcl}$-independent over $\Tilde{\mathbb{C}}$, they are $\Gamma\mathrm{cl}$-independent over $\Tilde{\mathbb{C}}$ so that, by exchange, we must have $\Tilde{n} \in \Gamma\mathrm{cl}(K, u)$. By Lemma \ref{keyLemma}, $\Tilde{n} \in \mathrm{acl}(K, u)$. Since $\Tilde{n}$ is infinite, it is immediate that $\Tilde{n} \not\in \mathrm{acl}(K)$. Using the exchange principle once more, we contradict the $\mathrm{acl}$-independence of $u$ over $K \cup \{\tilde{n}\}$.

    \underline{Case 2}: Suppose that $\Tilde{z} - \Tilde{x}$ has additive dimension $2g$. Then there is an algebraic subgroup $H \subseteq G^3(\Tilde{F})$ of dimension $2g$ such that $\Tilde{z} - \Tilde{x} \in H$. By Fact \ref{subgroupSatisfiesAdditiveRelations}, 
    we may assume $H$ is defined using elements of $\mathrm{End}(G)$ which implies that $z_i - x_i \in H$ for all $i \in I$ for some $I \subseteq \mathcal{U}$. It follows that $n_iz_i - n_ix_i \in H$ for all $i \in I$, so that $\Tilde{n}\Tilde{z} - \Tilde{n}\Tilde{x} \in H$. We deduce that $\Tilde{x} \in H + \Tilde{z}$ and $\Tilde{n}\Tilde{x} \in H + \Tilde{n}\Tilde{z}$.

    \underline{Subcase 2a}: Suppose that both $C_1(\Tilde{F}) \subseteq H + \Tilde{z}$ and $C_2(\Tilde{F}) \subseteq H + \Tilde{n}\Tilde{z}$. This implies, where $\lambda_{1}, \lambda_{2}, \lambda_{3} \in \mathrm{End}(G)$, that \begin{align*}
        \sum\limits_{j = 1}^{3}\lambda_{j}(\Tilde{x}_j) = \Tilde{y}
    \end{align*} for some $\Tilde{y} \in G(\Tilde{F})$. Since $\bigcup_{m \in \mathbb{N}\setminus \mathcal{N}}\{x \in C_1 \ | \ mx \in C_2\}$ is infinite, $\Tilde{y}$ must be a torsion point. This contradicts the fact that $C_1$ is not contained in a proper algebraic subgroup of $G^3(\mathbb{C})$.

    \underline{Subcase 2b}: Suppose that $C_2(\Tilde{F}) \nsubseteq H + \Tilde{n}\Tilde{z}$. We must have that $C_2(\Tilde{F}) \cap H + \Tilde{n}\Tilde{z}$ at only finitely many points. If this were not the case, $C_2(\Tilde{F}) \cap H + \Tilde{n}\Tilde{z}$ would have irreducible component $C_3(\Tilde{F})$ of the same dimension. If $C_2(\Tilde{F}) \neq C_3(\Tilde{F})$, then this would contradict the dimension of $C_2(\Tilde{F})$. So $C_2(\Tilde{F}) = C_3(\Tilde{F})$ which contradicts that $C_2(\Tilde{F}) \nsubseteq H + \Tilde{n}\Tilde{z}$. 
    We deduce that $\Tilde{n} \Tilde{x} \in \mathrm{acl}(\Tilde{n}\Tilde{z}, K, K')$. We now claim, using exchange, that $\Tilde{n}\Tilde{z} \in \mathrm{acl}(\Tilde{n}\Tilde{x}, K, K')$. By construction, $\Tilde{n}\Tilde{x} \not\in \mathrm{acl}(K)$ unless $\Tilde{n}\Tilde{x}$ is a torsion point. If $\Tilde{n}\Tilde{x}$ was torsion, then there would be infinitely many $x_i \in C_1(F)$ that are torsion points. By Corollary \ref{torsionCorollary}, this would imply that $C_1$ is contained in a proper algebraic subgroup of $G^3(\mathbb{C})$ which is a contradiction. Now, as $e_1', \ldots, e_k'$ are $\mathrm{dcl}$-independent over $\Tilde{\mathbb{C}}$, and hence $\mathrm{acl}$-independent over $\Tilde{\mathbb{C}}$, it follows that $\Tilde{n}\Tilde{x} \not\in \mathrm{acl}(K, K')$. Hence, the claim follows. 
We deduce that \begin{align*}
        \mathrm{tr.deg}(\Tilde{n}\Tilde{z} - \Tilde{n}\Tilde{x}/K, K', \Tilde{n}\Tilde{x}) = 0.
    \end{align*} 
    As in Case 1, let $u$ be a $\mathrm{acl}$-basis for $(l_1, \ldots, l_t)$ over $K \cup \{\Tilde{n}\}$. Combining Theorem \ref{additiveIndependenceTheorem} with the bound \begin{align*}
        \mathrm{tr.deg}(\Tilde{z} - \Tilde{x}, \Tilde{n}\Tilde{z} - \Tilde{n}\Tilde{x}/\Gamma\mathrm{cl}(K, K', u), \Tilde{n}) \leq 1,
    \end{align*} we have that $\Tilde{n} \in \Gamma\mathrm{cl}(K, K', u)$. As in Case 1, we obtain a contradiction.

    \underline{Subcase 2c}: Supposing that $C_1(\Tilde{F}) \nsubseteq H + \Tilde{z}$ leads to an analogous contradiction to that in Subcase 2b where we instead have $\Tilde{x} \not\in \mathrm{acl}(K, K')$.

    \underline{Case 3}: Suppose that $\Tilde{z} - \Tilde{x}$ has additive dimension $g$. Then there is an algebraic subgroup $H \subseteq G^3(\Tilde{F})$ of dimension $g$ such that $\Tilde{x} \in H + \Tilde{z}$ and $\Tilde{n}\Tilde{x} \in H + \Tilde{n}\Tilde{z}$. By Proposition \ref{propositionIForgot}, $C_1(\Tilde{F}) \nsubseteq H + \Tilde{z}$ and $C_2(\Tilde{F}) \nsubseteq G + \Tilde{n}\Tilde{z}$. So $\Tilde{x} \in \mathrm{acl}(\Tilde{z}, K$ and $\Tilde{n}\Tilde{x} \in \mathrm{acl}(\Tilde{n}\Tilde{z}, K)$. Using similar reasoning to Subcase 2b, we have that $\Tilde{z} \in \mathrm{acl}(\Tilde{x}, K, K')$ and $\Tilde{n}\Tilde{z} \in \mathrm{acl}(\Tilde{n}\Tilde{x}, K, K')$. Let $u$ be a $\mathrm{acl}$-basis for $(l_1, \ldots, l_t)$ over $K \cup \{\Tilde{n}\}$. Once again, combining Theorem \ref{additiveIndependenceTheorem} with the bound \begin{align*}
        \mathrm{tr.deg}(\tilde{z} - \tilde{x}, \tilde{n}\tilde{z} - \tilde{n}\tilde{x}/\Gamma\mathrm{cl}(K, K', u), \tilde{n}) = 0,
    \end{align*} we have that $\tilde{n} \in \Gamma\mathrm{cl}(K, K', u)$. We obtain a contradiction as in the previous cases.

    \underline{Case 4}: Suppose that $\Tilde{z} - \Tilde{x}$ has additive dimension $0$. Then $\Tilde{z} - \Tilde{x} = (0_G, 0_G, 0_G)$, so that $z_i = x_i$ for infinitely many $i \in \mathbb{N}$. Thus, the curves $C_1(F)$ and $C_1'(F)$ have infinitely many points in common. It follows, by similar arguments in Subcase 2b, 
    that $C_1(F) = C_1'(F)$. Similarly, $C_2(F) = C_2'(F)$. Then $C_1(F)$ and $C_2(F)$ are both defined over $K \cap K' = \mathbb{Q}^\mathrm{alg}$ which is a contradiction. 
\end{proof}

\printbibliography

\end{document}